\newtheorem{theorem}{Theorem}[section]
\newtheorem{lemma}[theorem]{Lemma}
\newtheorem{proposition}[theorem]{Proposition}
\newtheorem{corollary}[theorem]{Corollary}
\theoremstyle{definition}
\newtheorem{definition}[theorem]{Definition}
\newtheorem{example}[theorem]{Example}
\numberwithin{equation}{section}
\newtheorem{remark}[theorem]{Remark}
\title{Mixed Frobenius Structure and local A-model}
\author{Yukiko Konishi}
\address{ 
Department of Mathematics, 
Kyoto University,
Kyoto 606-8502, Japan 
}
\email{konishi@math.kyoto-u.ac.jp}
\author{Satoshi Minabe}
\address{Department of Mathematics, 
Tokyo Denki University, 120-8551 Tokyo,  Japan}
\email{minabe@mail.dendai.ac.jp}
\subjclass[2010]{Primary 53D45;  Secondary 14N35}
\keywords{Mixed Frobenius Structure, Quantum Cohomology, Local mirror symmetry} 
\begin{document}
\maketitle
\begin{abstract}
We define the notion of
mixed Frobenius structure which is a generalization
of the structure of a Frobenius manifold.  
We construct a mixed Frobenius structure on the cohomology of
weak Fano toric surfaces and that of  the three dimensional projective space 
using local Gromov--Witten invariants.
This is an analogue of the Frobenius manifold
associated to the quantum cohomology in the local
Calabi--Yau setting.
\end{abstract}
%%%%%%%%%%%%%%%%%%%%%%
\section{Introduction}
%%%%%%%%%%%%%%%%%%%%%%
The purpose of this paper is to introduce the notion of 
mixed Frobenius structure. It is a generalization of the 
structure of a Frobenius manifold, 
which plays an important role in the study of mirror 
symmetry. Our motivation for introducing it
comes from local mirror symmetry.
The main results, Theorems \ref{toric3} and \ref{prop-P3-2}, 
show that local Gromov--Witten invariants in 
the local A-model (i.e. the A-model side of local mirror symmetry)
give rise to mixed Frobenius structures.

In the rest of the introduction, we first explain the contents of the paper.
Then we explain the motivation from local mirror symmetry.
Throughout the paper, an algebra is an associative 
commutative algebra with unit over $\mathbb{C}$
of finite dimension. 
We denote by $\circ$ the multiplication of an algebra.
\subsection{A generalization of Frobenius algebra}
Recall that a Frobenius algebra is 
a pair of an algebra $A$ 
and a nondegenerate bilinear form $\langle~,~\rangle:
A\times A\to\mathbb{C}$
satisfying the compatibility condition
\begin{equation}\label{A1}
\langle x \circ y, z\rangle=\langle x,y\circ z \rangle\quad
(x,y,z\in A)~.
\end{equation}
For example, the even part of the 
cohomology ring of a compact oriented manifold and  
the intersection form is a Frobenius algebra (see, e.g. \cite{Kock}).

We generalize this notion as follows 
(\S \ref{seciton:Frobenius-filtration}). 
Let $A$ be an algebra. A Frobenius filtration on $A$ 
is a pair of an increasing sequence of ideals 
$\cdots \subset I_k\subset I_{k+1}\subset \cdots$ in $A$
and a set of  nondegenerate symmetric bilinear forms $(~,~)_k$ 
on graded quotients $I_{k}/I_{k-1}$ satisfying 
the condition similar to \eqref{A1}:
\begin{equation}\nonumber
( x,a\circ y)_k=( a\circ x,y)_k
 \qquad (x,y\in I_k/I_{k-1},~a\in A/I_{k-1})~.
\end{equation}
Given a   Frobenius algebra $A$ with a nilpotent element $n$,
there are constructions of the Frobenius filtration in
$A$ and the quotient algebra $A/(n)$ where $(n)$ is the ideal 
generated by $n$ (\S \ref{section:nilpotent-quotient-constructions}).
We call them the nilpotent construction and the quotient construction respectively.

To illustrate these constructions, we give two types of examples.
First examples are the cohomology rings 
of compact K\"ahler manifolds (\S \ref{example-cohomology}). 
Second examples are Chen--Ruan's cohomology rings 
of some non-compact orbifolds (\S \ref{example-orbifold}). 

\subsection{The mixed Frobenius structure}
Recall that
a Frobenius structure 
on a manifold  $M$ consists of 
a structure of the Frobenius algebra on the tangent bundle $TM$
and a vector field $E$ on $M$ called the Euler vector field
which satisfy certain compatibility conditions
(see Definition \ref{def:Frobenius-structure}).
It was defined by Dubrovin \cite{Dubrovin}
but before that
K. Saito found this structure in the singularity theory \cite{Saito} (see also \cite{Saito-Takahashi}).
An example of the Frobenius structure is the quantum cohomology ring 
(i.e. the cohomology with the quantum cup product)
of a compact symplectic manifold.   
For other examples, see e.g. \cite{Manin}
and references therein.

We generalize this notion and define the mixed Frobenius structure (\S \ref{section:MHS}).
A mixed Frobenius structure (MFS)
on a complex manifold $M$
consists of 
a structure of an algebra with a Frobenius filtration,
a torsion-free flat connection $\nabla$ on $TM$,
and an Euler vector field $E$
which satisfy compatibility conditions 
(see Definition \ref{def:MFS}).
We extend the nilpotent and the quotient constructions
to the MFS 
(\S \ref{section:nilpotent-quotient-construction2}).
If given a Frobenius manifold $M$ with a vector field $n$
which is nilpotent with respect to the multiplication
(and satisfies certain compatibility conditions with the metric and
the Euler vector field),
then we have a MFS
on $M$ and one on a certain submanifold 
(see Theorem \ref{thm:nilpotent2} and Corollary \ref{quotient-construction2}).
%We also define an analogue of the deformed connection 
%of the Frobenius manifold
%\cite{Dubrovin2}
%and show that it is flat
%(\S \ref{section:deformed-connection}).

Finally, 
we apply the quotient construction to
the quantum cohomology ring of 
the projective compactification
$\mathbb{P}(K_S\oplus \mathcal{O}_S)$
where $S$ is a weak Fano toric surface
and $K_S$ is its  canonical bundle,
and obtain a MFS on $H^*(S, \mathbb{C})$
(\S \ref{example-toricsurface}).
We also apply the quotient construction to
$\mathbb{P}(K_{\mathbb{P}^3}\oplus \mathcal{O}_{\mathbb{P}^3})$
and obtain a MFS on  $H^*(\mathbb{P}^3, \mathbb{C})$
(\S \ref{example-localP3}).
%
%Calculations of deformed flat coordinates 
%of these MFS's are given in the appendices 
%(\S \ref{deformed-coord-toric}, \S \ref{deformed-coord-P3}).

In the appendix (\S \ref{section:deformed-connection}), 
we give a definition of the deformed connection 
and  show that it is flat. We also write down the 
deformed flat coordinates of the above MFS's. 
%for weak Fano toric surfaces and $\mathbb{P}^3$. 
%(\S \ref{deformed-coord-toric} and \S \ref{deformed-coord-P3}).

%%%%%%%%%%%%%%%%%%%%%%%%%%%%%%%%%%%%%%%%%%%%%%%%%%
\subsection{Motivation from local mirror symmetry}
%%%%%%%%%%%%%%%%%%%%%%%%%%%%%%%%%%%%%%%%%%%%%%%%%%%
For  a Calabi--Yau threefold $X$,  
two Frobenius structures on $H^*(X, \mathbb{C})$ are known. 
The one is given by 
the quantum cohomology ring
and the intersection form (the A-model). 
The other is constructed by Barannikov--Kontsevith \cite{BK}
which is  closely related to the variation of Hodge structures 
on $H^3(X, \mathbb{C})$ (the B-model).
If Calabi--Yau threefolds $X$ and $Y$ are mirror partners,
it is conjectured that the former Frobenius structure on $H^*(X, \mathbb{C})$
is isomorphic to the latter Frobenius structure on $H^*(Y, \mathbb{C})$.
This conjecture was proved when $X$ is a complete intersection
in a projective space \cite{Barannikov}.

In \cite{CKYZ}, local mirror symmetry for 
weak Fano toric surfaces $S$
was derived from 
mirror symmetry of toric Calabi--Yau hypersurfaces
containing $S$ as a smooth divisor.
Therefore, looking at the A-model side, 
it is expected that 
$H^*(S, \mathbb{C})$ should inherit
a Frobenius structure from the quantum
cohomology of 
the corresponding Calabi--Yau threefold.

However, the above expectation turns out to be  too naive
because it seems that there is no natural way to obtain a nondegenerate
bilinear form on $H^*(S, \mathbb{C})$ from the intersection form on 
the Calabi--Yau threefold.
Therefore we have to abandon the nondegenerate pairing
and have to generalize the notion of Frobenius algebra. 
Hints on how come from looking at the B-model sides. 
In mirror symmetry, the B-model for Calabi--Yau manifolds 
is about the Hodge structure
whereas in local mirror symmetry it is about the mixed Hodge structure,
which has one more extra datum called the weight filtration.
This leads us to introduce the notion of Frobenius filtration.

We believe that the MFS on $H^*(S, \mathbb{C})$
constructed in 
\S \ref{example-toricsurface}
is what would be obtained from 
the Frobenius structure of the corresponding Calabi--Yau threefold
by the following two reasons.
The first reason is that 
in the case $S=\mathbb{P}^2$, if the 
quotient construction is applied to the 
quantum cohomology of the 
corresponding Calabi--Yau threefold $X$,
the resulting MFS is the same as this one.
The second reason is that the Frobenius filtration on $H^*(S, \mathbb{C})$
agrees with the weight filtration on the local B-model side
(see Remark \ref{rem:localB}).
It would be very interesting if we can construct 
a MFS on the local B-model side and see whether it is
isomorphic to the one constructed here.   
These are left as future problems.

\subsection*{Note added in 2018}
The original version of this paper was written in 2012.
Since then, there are some progresses on the study of mixed Frobenius structures.
In a sequel paper \cite{KonishiMinabe14}, the authors refined a definition of 
mixed Frobenius structures. The new definition requires a stronger condition
on the potentiality of the product. See \cite[Remark 4.7]{KonishiMinabe14}. 
It is shown in \cite{KonishiMinabe14} that the main results of this 
paper remain true under this new definition.
We also remark that mixed Frobenius structure on the local B-model side was
constructed by Shamoto \cite{Shamoto}. 

\subsection*{Acknowledgements}
The authors would like to thank Takuro Mochizuki for useful comments
on Kashiwara's filtration (see Remark \ref{rem:mochizuki}).
The first named author is supported in part by 
JSPS Grant-in-Aid for Young Scientists (No.\,22740013), 
Grant-in-Aid for Challenging Exploratory Research (26610008),
JSPS KAKENHI Kiban-A (16H02146) 
and JSPS KAKENHI Kiban-S (16H06337).
The second named author is supported in part by 
JSPS Grant-in-Aid for Young Scientists (No.\,24740028), 
Grant for Basic Science Research
Projects from The Sumitomo Foundation and JSPS KAKENHI Grand number JP17K05228. 
The authors thank the referee for useful comments. 
%%%%%%%%%%%%%%%%%%%%%%%%%%%%%%%%%%%%%%%%%%%%%%
\section{Frobenius ideal and Frobenius filtration }
\label{seciton:Frobenius-filtration}
%%%%%%%%%%%%%%%%%%%%%%%%%%%%%%%%%%
\begin{definition}
Let $A$ be an algebra.
A {\it Frobenius ideal} of $A$ is a pair $(I,~(~,~))$
of an ideal $I$ of $A$ and a nondegenerate symmetric bilinear form on $I$
which satisfies the condition
\begin{equation}\label{I1}
( x,a\circ y)=( a\circ x,y)
 \qquad (x,y\in I,~a\in A)~.
\end{equation}
\end{definition}

\begin{definition}
An increasing filtration of an algebra $A$ by ideals
\begin{equation}\nonumber
I_{\bullet}:  0\subset \cdots \subset~ I_k\subset~ I_{k+1}~\subset \cdots \subset A
\qquad (k\in \mathbb{Z})
\end{equation}
together with bilinear forms $(~,~)_k$ on $I_k/I_{k-1}$
is a {\it Frobenius filtration}
if $I_{\bullet}$ is exhaustive\footnote{
An increasing filtration $I_{\bullet}$ on 
a finite dimensional vector space
$A$ is exhaustive if
there exists $k, l$ such that 
$I_k=\{0\}$ and $I_l=A$.
} 
and 
$(I_k/I_{k-1},~(~,~)_k)$ is a Frobenius ideal of $A/I_{k-1}$.
\end{definition}

%\begin{example}
%If $(A,~\langle~,~\rangle)$ is a Frobenius algebra,
%then the pair of 
%the trivial sequence $I_{\bullet}: 0=I_{d-1}~\subsetneq I_d=A$
%and the bilinear form $( ~,~)_d:=\langle~,~\rangle$ 
%is a Frobenius filtration.
%\end{example}

%\begin{example}
%Let $(A_k,~\langle~,~\rangle_k)$ ($1\leq k\leq d$) be Frobenius algebras.
%Let $A=A_1\oplus \cdots \oplus A_d$ be the direct sum
%and 
%\begin{equation}\nonumber
%I_k=\begin{cases} 0&(k\leq 0)\\
%A_1\oplus \cdots\oplus A_k &(1\leq k< d)\\
%A&(k\geq d)\end{cases}
%\end{equation}
%Then $(I_{\bullet},~\langle~,~\rangle_{\bullet})$
%is a Frobenius filtration in $A$. 
%A particular example of this is the case of semisimple algebras.
%If $A$ is a semisimple algebra, then it is 
%the direct sum of one-dimensional algebras
%generated by orthogonal idempotents:
%\begin{equation}\nonumber
%A=\mathbb{C}\epsilon_1\oplus \cdots \oplus \mathbb{C}\epsilon_d~,
%\quad \epsilon_i\circ\epsilon_j=\delta_{i,j}\epsilon_i~.
%\end{equation}
%Let $g_k\in \mathbb{C}$ $(k=1,\ldots,d)$ be nonzero numbers,
%and define a bilinear form on $\mathbb{C}\epsilon_k$ by
%$\langle \epsilon_k,\epsilon_k\rangle_k=g_k$.
%Then the above construction gives a Frobenius filtration in $A$.
%\end{example}

For the later purpose, we state the following
\begin{lemma}\label{lem:quotient}
If $(I_{\bullet},~(~,~)_{\bullet})$ is a Frobenius filtration
on an algebra $A$, then $(I_{\bullet}/I_k,~(~,~)_{\bullet})$
is a Frobenius filtration on the quotient algebra $A/I_k$
for any $k\in \mathbb{Z}$.
\end{lemma}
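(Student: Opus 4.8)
The plan is to show that the proposed data form an exhaustive increasing filtration of $A/I_k$ by ideals whose nonzero graded quotients, carrying the same forms $(\,,\,)_\bullet$, are Frobenius ideals of the appropriate quotient algebras. The entire argument will rest on two applications of the third isomorphism theorem. First I would unwind the notation: writing $B=A/I_k$, the $j$-th term of $I_\bullet/I_k$ is the image of $I_j$ in $B$, namely $(I_j+I_k)/I_k$. For $j\le k$ this image is zero because $I_j\subseteq I_k$, while for $j>k$ it equals $I_j/I_k$, which is an ideal of $B$ since $I_j$ is an ideal of $A$ containing $I_k$. The inclusions $I_j\subseteq I_{j+1}$ give $I_j/I_k\subseteq I_{j+1}/I_k$, so the filtration is increasing, and exhaustiveness of $I_\bullet$ transfers at once: choosing $l$ with $I_l=A$ yields $I_l/I_k=B$, while the lower terms eventually vanish.

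The heart of the matter is the graded quotients. For $j>k$, the third isomorphism theorem supplies a canonical isomorphism of algebras
\[
B/(I_{j-1}/I_k)=(A/I_k)/(I_{j-1}/I_k)\xrightarrow{\ \sim\ }A/I_{j-1},
\]
and, compatibly, an isomorphism of modules
\[
(I_j/I_k)/(I_{j-1}/I_k)\xrightarrow{\ \sim\ }I_j/I_{j-1}.
\]
For $j\le k$ the graded quotient is zero and nothing is required. Under these identifications the form $(\,,\,)_j$ placed on the $j$-th graded quotient of $I_\bullet/I_k$ is literally the form $(\,,\,)_j$ on $I_j/I_{j-1}$, so its nondegeneracy and symmetry are inherited verbatim. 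Likewise the multiplication action of $B/(I_{j-1}/I_k)$ corresponds to that of $A/I_{j-1}$, so the compatibility condition \eqref{I1} for $(I_j/I_k)/(I_{j-1}/I_k)$ inside $B/(I_{j-1}/I_k)$ translates directly into the condition for $(I_j/I_{j-1},(\,,\,)_j)$ inside $A/I_{j-1}$, which holds by the hypothesis that $(I_\bullet,(\,,\,)_\bullet)$ is a Frobenius filtration.

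The one point demanding care—more a bookkeeping subtlety than a genuine obstacle—is the compatibility of the two isomorphism-theorem maps: I would verify that the module isomorphism on the graded quotient is equivariant for the algebra isomorphism on the quotient rings, so that a product $a\circ y$ is carried to the corresponding product on the other side. Once this equivariance is recorded, condition \eqref{I1} transfers term by term and the lemma follows.
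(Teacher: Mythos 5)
Your proof is correct. The paper in fact states this lemma without proof (it is introduced with ``For the later purpose, we state the following'' and treated as routine), and your argument via the third isomorphism theorem --- identifying $(I_j/I_k)/(I_{j-1}/I_k)$ with $I_j/I_{j-1}$ and $(A/I_k)/(I_{j-1}/I_k)$ with $A/I_{j-1}$ compatibly, so that the forms $(~,~)_j$ and condition \eqref{I1} transfer verbatim --- is precisely the standard verification the authors left implicit.
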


%%%%%%%%%%%%%%%%%%%%%%%%%%%%%%%%%%%%%%%%%%%%%%
\section{Constructions of Frobenius filtration by a nilpotent element}
\label{section:nilpotent-quotient-constructions}
%%%%%%%%%%%%%%%%%%%%%%%%%%%%%%%%%%%%%%%%%%%%%%
%%%%%%%%%%%%%%%%%%%%%%%%%%%%%%%%%%%%%%%%%%%%%%%%%%%%%%%%%%%%%%%%
\subsection{Frobenius filtration defined by a nilpotent element}
\label{nilpotent-construction}
%%%%%%%%%%%%%%%%%%%%%%%%%%%%%%%%%%%%%%%%%%%%%%%%%%%%%%%%%%%%%%%%
Let $(A,~~ \langle~~,~~\rangle)$ be a Frobenius algebra.
Assume that there exists a nilpotent element $n \in A$ of order $d$
(i.e. $n^d=0$, $n^{d-1}\neq 0$). 
For $0\leq k \leq d$, we set  $J_k:=\{ x \in A \mid x\circ n^k=0\}$. 
Then we have a filtration 
$$J_0=0\subset J_1 \subset \cdots \subset J_{d-1} \subset J_d=A$$
of ideals on $A$. Let $I=(n)$ be the ideal generated by $n$, and let
$I_k:=I+J_k$ ($0\leq k \leq d$). Then we have a filtration 
\begin{equation}\label{nilp1}
I_{-1}:=0\subset I_0=I \subset I_1 \subset \cdots \subset I_{d-1} \subset I_d=A
\end{equation}
of ideals in $A$.
Next
we define a pairing $(~~,~~)_k$ on each graded quotient $I_k/I_{k-1}$. 
Note that $I_k/I_{k-1}\cong J_k/ (J_{k-1}+I\cap J_k)$ for $k>0$.
\begin{definition}\label{nilp2}
(i) For $x, y \in I_0$, there are $\tilde{x}, \tilde{y}\in A$ such that 
$x=n\circ \tilde{x}$ and $y=n\circ \tilde{y}$. We define the pairing $(~~,~~)_0$ on $I_0$ by
$$(x, y)_0=\langle \tilde{x}, \tilde{y}\circ n \rangle~.$$
(ii) 
For $x, y\in I_k/I_{k-1}$ ($k>0$), we take representatives 
$\tilde{x},~\tilde{y} \in J_k$ and set 
$(x,y)_k:=\langle \tilde{x}, \tilde{y}\circ n^{k-1} \rangle$.
\end{definition}
It is easy to check that the pairing $(~~,~~)_k$ is well-defined.

\begin{lemma}
(i) The pairing $(~~,~~)_k$ on $I_k/I_{k-1}$ is symmetric and 
satisfies $$(a\circ x, y)_k=(x, a\circ y)_k$$
for any $x, y \in I_k/I_{k-1}$ and $a\in A/I_{k-1}$.\\
(ii) The pairing $(~~,~~)_k$ is nondegenerate.
\end{lemma}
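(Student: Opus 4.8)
The plan is to prove part (i) (symmetry together with the invariance $(a\circ x,y)_k=(x,a\circ y)_k$) as a direct formal consequence of the Frobenius compatibility \eqref{A1} and the symmetry of $\langle\ ,\ \rangle$, and to reduce part (ii) (nondegeneracy) to one duality statement about the ambient Frobenius algebra $A$. For part (i) with $k>0$ I would lift to representatives in $J_k$: given $x,y\in I_k/I_{k-1}$ with representatives $\tilde x,\tilde y\in J_k$ and $a\in A/I_{k-1}$ with representative $\tilde a\in A$, the product $\tilde a\circ\tilde x$ again lies in $J_k$ (since $\tilde x\circ n^k=0$ forces $\tilde a\circ\tilde x\circ n^k=0$), so it represents $a\circ x$. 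Then
\[
(a\circ x,y)_k=\langle \tilde a\circ\tilde x,\tilde y\circ n^{k-1}\rangle=\langle \tilde x,\tilde a\circ\tilde y\circ n^{k-1}\rangle=(x,a\circ y)_k,
\]
the middle equality being \eqref{A1}. Symmetry follows by rewriting $\langle\tilde x,\tilde y\circ n^{k-1}\rangle=\langle\tilde x\circ n^{k-1},\tilde y\rangle$ via \eqref{A1} and then invoking the symmetry of $\langle\ ,\ \rangle$. The case $k=0$ is identical once one observes that $a\circ x=n\circ(a\circ\tilde x)$, so $a\circ\tilde x$ serves as the $\tilde\cdot$-representative of $a\circ x$.

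The heart of the matter is part (ii), and the key step I would isolate first is the duality
\[
J_k^{\perp}=A\circ n^{k}\qquad(0\le k\le d)
\]
with respect to $\langle\ ,\ \rangle$. This is immediate from \eqref{A1} and nondegeneracy: $x\in(A\circ n^{k})^{\perp}$ means $\langle x,z\circ n^{k}\rangle=\langle x\circ n^{k},z\rangle=0$ for all $z\in A$, which by nondegeneracy is equivalent to $x\circ n^{k}=0$, i.e. $x\in J_k$; since $A$ is finite dimensional with $\langle\ ,\ \rangle$ nondegenerate, taking orthogonal complements yields the claimed identity.

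With this in hand the radical computation is short. Fix $k>0$ and suppose $\tilde x\in J_k$ represents a class in the radical of $(\ ,\ )_k$, i.e. $\langle\tilde x,\tilde y\circ n^{k-1}\rangle=0$ for all $\tilde y\in J_k$. Rewriting via \eqref{A1} gives $\tilde x\circ n^{k-1}\in J_k^{\perp}=A\circ n^{k}$, so $\tilde x\circ n^{k-1}=w\circ n^{k}$ for some $w\in A$; hence $(\tilde x-w\circ n)\circ n^{k-1}=0$, that is $\tilde x-w\circ n\in J_{k-1}$. As $\tilde x\in J_k$ and $J_{k-1}\subset J_k$, also $w\circ n\in I\cap J_k$, so $\tilde x\in (I\cap J_k)+J_{k-1}$, which is precisely the kernel of the projection $J_k\to I_k/I_{k-1}$ under the isomorphism $I_k/I_{k-1}\cong J_k/((I\cap J_k)+J_{k-1})$. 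Thus the class is zero. For $k=0$ the argument is even simpler: if $x=n\circ\tilde x$ pairs trivially with all of $I_0$, then $\langle\tilde x\circ n,\tilde y\rangle=0$ for all $\tilde y\in A$, so $\tilde x\circ n=0$ by nondegeneracy on $A$, whence $x=n\circ\tilde x=0$.

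I expect the only real obstacle to be bookkeeping, namely matching the abstract radical of $(\ ,\ )_k$ with the exact subspace $(I\cap J_k)+J_{k-1}$ one quotients by; the duality $J_k^{\perp}=A\circ n^{k}$ is what converts the radical condition into an ideal-theoretic membership statement and makes this matching transparent. Once that duality is set up, there is no further structural or analytic difficulty, and the remaining verifications are formal consequences of \eqref{A1} and the nondegeneracy of $\langle\ ,\ \rangle$.
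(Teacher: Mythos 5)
Your proof is correct and rests on the same key fact as the paper's, namely that $J_k$ and the ideal generated by $n^k$ are mutual orthogonal complements under the intersection form (the paper phrases this as $I^{\perp}=J_k$ for the twisted pairing $\langle a,b\rangle_k=\langle a,b\circ n^{k-1}\rangle$, which is the same computation as your $(A\circ n^k)^{\perp}=J_k$). The only divergence is the last step: where the paper determines the orthogonal $K_k=I+J_{k-1}$ of $J_k$ by a double-orthocomplement argument in the quotient $A/J_{k-1}$, you chase a radical representative $\tilde{x}$ directly, writing $\tilde{x}\circ n^{k-1}=w\circ n^{k}$ and concluding $\tilde{x}\in(I\cap J_k)+J_{k-1}$, which reaches the same conclusion by a slightly more elementary route.
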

\begin{proof}
The first statement follows from the Frobenius property \eqref{A1} 
of $\langle ~~, ~~\rangle$. 
For the second,  we first consider the case $k=0$. 
The pairing $(~~,~~)_0$ is nondegenerate since
\begin{eqnarray*}
(x,y)_0=0\quad \forall y\in I_0 \quad 
\Longleftrightarrow \langle \tilde{x}, n\circ \tilde{y}\rangle=0\quad \forall \tilde{y}\in A
&\Longleftrightarrow& x=n\circ \tilde{x}=0~. 
\end{eqnarray*}
Next we consider the case $k>0$.
Consider the pairing $\langle~~,~~\rangle_k$ on $A$ defined by
$\langle a,b\rangle_k:=\langle a,b\circ n^{k-1}\rangle$. 
Let $I^{\perp}:=\{a\in A \mid \langle a, b\rangle_k=0,~~\forall b\in I\}$.
Then it follows that $I^{\perp}=J_k$, since
\begin{eqnarray*}
\langle a, b\circ n^{k-1} \rangle=0, \quad \forall b \in I \quad
\Longleftrightarrow \quad
\langle a, \tilde{b}\circ n^{k} \rangle=0, \quad \forall \tilde{b} \in A \quad
\Longleftrightarrow \quad
a \in J_k~. 
\end{eqnarray*}  
The pairing  $\langle~~,~~\rangle_k$ is degenerate precisely along
$J_{k-1}$. Then the orthogonal of $I/(I\cap J_{k-1})$ in $A/J_{k-1}$ with respect to this
pairing is $J_k/J_{k-1}$. Therefore the orthogonal $K_k$ of $J_k$ in $A$ is 
$K_k=I+J_{k-1}$. It follows that $\langle~~,~~\rangle_k$ induces 
a nondegenerate pairing on $J_k/(K_k\cap J_k)=J_k/(J_{k-1}+I\cap J_k)$. It is clear that 
the induced pairing coincides with $(~~,~~)_k$ defined above. Thus 
the pairing  $(~~,~~)_k$ on $I_k/I_{k-1}$ is nondegenerate.
\end{proof}

To summarize, we have obtained the following 
\begin{proposition}\label{prop-nilp}
If $(A,\langle~,~\rangle)$ is a Frobenius algebra with a nilpotent element $n$,
$\left(I_{\bullet}\, ,~ (~~,~~)_{\bullet}\right)$ 
defined in \eqref{nilp1} and Definition \ref{nilp2}
is a Frobenius filtration on $A$.
\end{proposition}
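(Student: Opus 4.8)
The plan is to read the claim off directly from the definition of a Frobenius filtration, assembling the pieces already supplied by the construction \eqref{nilp1}, by Definition \ref{nilp2}, and by the preceding Lemma. By definition I must verify two things: that $I_{\bullet}$ is an exhaustive increasing filtration of $A$ by ideals, and that for each $k$ the pair $(I_k/I_{k-1},(~,~)_k)$ is a Frobenius ideal of the quotient algebra $A/I_{k-1}$.

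First I would check the filtration data. The chain \eqref{nilp1} is built from the ideals $J_k=\{x\in A\mid x\circ n^k=0\}$ and $I=(n)$, both of which are ideals of $A$; hence each $I_k=I+J_k$ is an ideal, and $I_{k-1}\subset I_k$ by construction. Exhaustivity is immediate since $I_{-1}=0$ and $I_d=A$ (the latter because $n^d=0$ forces $J_d=A$). Since $I_{k-1}\subset I_k$ are both ideals of $A$, the quotient $I_k/I_{k-1}$ is an ideal of $A/I_{k-1}$, so the ideal part of the Frobenius ideal requirement is automatic.

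Next I would verify that each $(~,~)_k$, defined in Definition \ref{nilp2}, makes $I_k/I_{k-1}$ a Frobenius ideal of $A/I_{k-1}$. This requires the pairing to be symmetric, nondegenerate, and to satisfy the compatibility $(x,a\circ y)_k=(a\circ x,y)_k$ for $x,y\in I_k/I_{k-1}$ and $a\in A/I_{k-1}$. These three properties are precisely the content of the preceding Lemma: part (i) yields symmetry together with the Frobenius compatibility (both inherited from \eqref{A1} for $\langle~,~\rangle$), while part (ii) yields nondegeneracy. Matching these established facts against the definition shows that $(I_{\bullet},(~,~)_{\bullet})$ is a Frobenius filtration on $A$, which is the assertion.

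The genuine content, and what I expect would be the main obstacle were the Lemma not already in hand, is the nondegeneracy in part (ii). The subtlety is that the auxiliary pairing $\langle a,b\rangle_k=\langle a,b\circ n^{k-1}\rangle$ is itself degenerate on $A$, so one must first identify its radical, namely $J_{k-1}$, and the orthogonal complement of $J_k$, namely $I+J_{k-1}$, and then confirm that the pairing descends to a nondegenerate form on $J_k/(J_{k-1}+I\cap J_k)\cong I_k/I_{k-1}$. Once that orthogonality bookkeeping is settled, the Proposition follows simply by collecting the verified properties.
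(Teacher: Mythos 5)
Your proposal is correct and matches the paper's treatment: the Proposition is stated there with the preamble ``To summarize, we have obtained the following,'' i.e.\ it is exactly the assembly of the filtration \eqref{nilp1} by ideals (exhaustive since $I_{-1}=0$ and $I_d=A$) with the symmetry, compatibility, and nondegeneracy of $(~,~)_k$ established in the preceding Lemma. You also correctly identify the nondegeneracy argument via the radical of $\langle a,b\circ n^{k-1}\rangle$ as the only nontrivial step, which is where the paper puts its effort as well.
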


\begin{remark}\label{rem:mochizuki}
If a linear endomorphism $n$ on a vector space $A$ is nilpotent
of order $d$,
then we have a filtration on $A$ called the monodromy weight filtration
\begin{equation}\label{MWF}
0\subset W_0\subset W_1\subset\cdots\subset W_{2d-1}\subset W_{2d-2}=A~
\end{equation}
uniquely determined by the conditions \cite[p.93]{Cattani}:
\begin{equation}\nonumber
\begin{split}
&n(W_l)~~\subset~~ W_{l-2}~,\\
&n^j~:~W_{d+j-1}/W_{d+j-2}~\stackrel{\sim}{\rightarrow}~
W_{d-j-1}/W_{d-j-2} \quad (0\leq j<d)~.
\end{split} 
\end{equation}
The filtration \eqref{nilp1} is related to the 
monodromy weight filtration by
\begin{equation}\nonumber
\begin{split}
I_0&=\mathrm{Im}\,n~, 
\\
I_k&
=\mathrm{Im}\,n+W_{k+d-2}
\quad (0<k\leq d)~.
\end{split}
\end{equation}
This filtration agrees with the filtration $(N_*A)_{\bullet}$ 
defined in \cite[\S 3.4]{Kashiwara86} for the filtration
$A_{\bullet}: 0=A_0\subset A_1=A$.
\end{remark}

%%%%%%%%%%%%%%%%%%%%%%%%%%%%%%%%%%%%%%%%%%%%%%%%%%%%%%%%%%
\subsection{Frobenius filtration by quotient construction}
\label{quotient-construction}
%%%%%%%%%%%%%%%%%%%%%%%%%%%%%%%%%%%%%%%%%%%%%%%%%%%%%%%%%%
As in \S \ref{nilpotent-construction},
let $(A,~\langle~,~\rangle)$ be a Frobenius algebra, $n\in A$
a nilpotent element of order $d$.
Let $J_k=\{x\in A\mid x\circ n^k=0\}$
and $I=(n)$.

By Lemma \ref{lem:quotient}, the Frobenius filtration 
$(I_{\bullet},~(~,~)_{\bullet})$
constructed in \S \ref{nilpotent-construction}
induces a  Frobenius filtration on the quotient algebra
$A'=A/I$. Explicitly, the induced sequence is 
\begin{equation}\label{quot-filtration}
I_{\bullet}':0~\subset~ I_1'\subset\cdots \subset~ I_d'~=~A'~
\qquad (I_k':=(I+J_k)/I)~.
\end{equation}
The induced  bilinear forms $( ~,~)'_k$ on $I_k'/I_{k-1}'$ 
are given by
\begin{equation}\label{quot-bilinear}
( a',b')_k'=( a,b )_k
\end{equation}
where $a,b\in J_k$ are those such that
the image under $J_k\hookrightarrow J_k+I\rightarrow I_k'$
are $a',b'\in I_k'$.

\begin{corollary}\label{prop-quotient}
If $(A,\langle~,~\rangle)$ is a Frobenius algebra with a nilpotent element
$n$,
$(I_{\bullet}'\,,~(~,~)_{\bullet}')$ defined in \eqref{quot-filtration},
\eqref{quot-bilinear}
is a Frobenius filtration on $A/I$.
\end{corollary}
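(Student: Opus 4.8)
The plan is to deduce this corollary directly from Lemma \ref{lem:quotient}, applied to the Frobenius filtration produced by Proposition \ref{prop-nilp}. By that proposition, $(I_{\bullet}, (~,~)_{\bullet})$ of \eqref{nilp1} and Definition \ref{nilp2} is a Frobenius filtration on $A$, whose bottom nonzero term is $I_0 = I = (n)$. I would therefore specialize Lemma \ref{lem:quotient} to the index $k = 0$, which at once yields that $(I_{\bullet}/I_0, (~,~)_{\bullet})$ is a Frobenius filtration on the quotient algebra $A/I_0 = A/I = A'$. The substance of the corollary is then only to recognize this abstract output as the explicit data recorded in \eqref{quot-filtration} and \eqref{quot-bilinear}.

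First I would check that the quotient filtration $I_{\bullet}/I_0$ coincides with the stated one. Since $I_k = I + J_k$ and $I = I_0$, the image of $I_k$ under $A \to A/I$ is exactly $I_k' = (I + J_k)/I$; moreover $I_m \subseteq I_0$ for every $m \le 0$, so all terms of index $\le 0$ vanish in the quotient. Hence $I_{\bullet}/I_0$ is precisely the sequence $0 \subset I_1' \subset \cdots \subset I_d' = A'$ of \eqref{quot-filtration}, with $I_k' = I_k/I_0$.

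Next I would identify the graded quotients together with the induced pairings. By the third isomorphism theorem there is a canonical isomorphism $(I_k/I_0)/(I_{k-1}/I_0) \cong I_k/I_{k-1}$, under which the bilinear form inherited via Lemma \ref{lem:quotient} is carried to $(~,~)_k$. It then suffices to observe that formula \eqref{quot-bilinear} is nothing but this transported form expressed through representatives: for $a', b' \in I_k'$ arising from $a, b \in J_k$ via $J_k \hookrightarrow J_k + I \to I_k'$, one has $(a', b')_k' = (a, b)_k = \langle a, b \circ n^{k-1} \rangle$, which is exactly the value of the pairing furnished by Lemma \ref{lem:quotient}.

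The main obstacle, such as it is, lies not in any deep argument but in the careful bookkeeping of these identifications --- in particular in confirming that the passage to representatives in $J_k$ is consistent both with the isomorphism $I_k' \cong J_k/(J_k \cap I)$ and with the earlier identification $I_k/I_{k-1} \cong J_k/(J_{k-1} + I \cap J_k)$, so that \eqref{quot-bilinear} is simultaneously well-defined and in agreement with the inherited form. Once these compatibilities are verified, the Frobenius compatibility condition and the nondegeneracy of $(~,~)_{\bullet}'$ are inherited for free from Lemma \ref{lem:quotient}, completing the proof.
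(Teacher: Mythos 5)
Your proof is correct and follows exactly the paper's own route: the corollary is obtained by applying Lemma \ref{lem:quotient} at $k=0$ to the Frobenius filtration of Proposition \ref{prop-nilp}, and then identifying the induced data with the explicit formulas \eqref{quot-filtration} and \eqref{quot-bilinear}. The extra bookkeeping you spell out (the third isomorphism theorem and the consistency of representatives in $J_k$) is implicit in the paper's presentation but entirely in the same spirit.
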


%%%%%%%%%%%%%%%%%
\section{Examples I:  cohomology of compact K\"ahler manifolds}
\label{example-cohomology}
%%%%%%%%%%%%%%%%%
\subsection{Examples}
Let $X$ be a compact K\"ahler manifold of 
complex dimension $d-1$. Let $A=\oplus_{j=0}^{d-1}H^{2j}(X, \mathbb{C})$
be the even part of the cohomology of $X$. Then $A$ is a Frobenius algebra with 
respect to the cup product $\cup$ and the usual intersection pairing 
$$
\langle x , y \rangle =
\int_{{X}}x \cup y
\qquad (x, y \in H^*({X}, \mathbb{C})).
$$
Let $n\in H^2(X, \mathbb{C})$ be a class such that 
either $n$ or $-n$ is a K\"ahler class.  
Then $n$ is nilpotent of order $d$. 
By the construction in \S \ref{nilpotent-construction},
we get a Frobenius filtration
$$0\subset I_0 \subset I_1 \subset \cdots \subset I_{d-1} \subset I_d=A~.$$

\begin{example}\label{example-Pn}
Let $X=\mathbb{P}^{d-1}$, $L=\mathcal{O}_{\mathbb{P}^{d-1}}(m)$
($m\in \mathbb{Z}$, $m\neq 0$) and $n=c_1(L)$. 
Then $A:=H^*(X,\mathbb{C})\cong \mathbb{C}[n]/(n^d)$
and the intersection form is given by 
$\langle n^i,n^j\rangle=m^{i+j}\delta_{i+j,d-1}$.
The Frobenius filtration on $A$
defined by $n$ is given as follows. The filtration is 
$$I_0=\bigoplus_{i=1}^{d-1} \mathbb{C}n^i,\qquad I_k=I_0 \quad (1\leq k \leq d-1),\qquad I_d=I_0\oplus \mathbb{C}1~.$$
If we set $e_i:=n^i/m^i$ ($1\leq i \leq d-1$) and $e_{d}:=1$, 
the pairings are
$$(e_i, e_j)_0=\frac{1}{m}\delta_{i+j, d}~,\qquad (e_d, e_d)_d=m^{d-1}~.$$ 
\end{example}

\begin{example}\label{example-surface}
Let $(X, L)$ be a polarized algebraic surface and $n=c_1(L)$. 
Then by the Lefschetz decomposition, we have
$$H^2(X, \mathbb{C})=H^2_{\rm prim}(X, \mathbb{C}) \oplus \mathbb{C}n~,$$
where $H^2_{\rm prim}(X, \mathbb{C})$ is the kernel of 
$n\, \cup : H^2(X,\mathbb{C}) \to H^4(X, \mathbb{C})$. 
The Frobenius filtration on $A=H^{\rm even} (X, \mathbb{C})$ 
defined by $n$ is given as follows. The filtration is:
\begin{eqnarray*}
I_0&=&\mathbb{C}n\oplus  \mathbb{C}n^2~,
\\
I_1&=&I_0\oplus H^2_{\rm prim}(X, \mathbb{C})~,
\\
I_2&=&I_1~,
\\
I_3&=&I_1\oplus \mathbb{C}1~.
\end{eqnarray*}
The pairings are:
$$(n, n^2)_0=K,\qquad (n,n)_0=(n^2,n^2)_0=0~,$$
$$(x ,y)_1=\int_Xx \cup y~,$$
$$(1,1)_3=K~,$$
where $K=\int_X n^2$.
%For example, let us consider a rational ruled surface 
%$X=\mathbb{P}(\mathcal{O}_{\mathbb{P}^1}(k)\oplus \mathcal{O}_{\mathbb{P}^1})$ ($k\geq 0$).  
%Let $F:=\pi^*c_1(\mathcal{O}_{\mathbb{P}^1}(1))$, 
%where $\pi : X\to \mathbb{P}^1$ is the projection, and  
%$B$ be the first Chern class of relative $\mathcal{O}(1)$ of 
%the $\mathbb{P}^1$-bundle $\pi$. Then $A=H^*(X, \mathbb{C})=\mathbb{C}[B, F]/(B^2+kBF,~F^2)$. 
%If we take $n=B+(k+1)F \in H^2(X,\mathbb{C})$, which is an ample class, 
%then $H^2_{\rm prim}(X, \mathbb{C})=\{ xB+yF \in H^2(X, \mathbb{C}) \mid x+y=0\}$. 
\end{example}
%%%%%%
\subsection{Remarks on a mixed Hodge structure for Example \ref{example-surface}}
\label{sec:MHS}
In this subsection, we explain that the filtration 
$I_{\bullet}$ in Example \ref{example-surface} 
can be regarded as a weight filtration of an $\mathbb{R}$-mixed Hodge structure.
See e.g. \cite[\S 3.1]{PetersSteenbrink} for a definition of the $\mathbb{R}$-mixed Hodge structure.
Although the result of this subsection is not used in the rest of the paper, 
it is relevant to local mirror symmetry. See Remark \ref{rem:localB}.

Let $c\in \sqrt{-1}\mathbb{R}$ be a nonzero purely imaginary number.
Let $X$ and $n$ be as in Example \ref{example-surface}.
We set
\begin{equation}\nonumber
\begin{split}
A_{\mathbb{R}}&=\{x\cup e^{ cn}\mid x\in H^{even}(X,\mathbb{R}) \}
\\
&=\mathbb{R}\Big(1+cn+\frac{c^2}{2}n^2\Big)\oplus
\mathbb{R}(n+cn^2)\oplus H_{\mathrm{prim}}^2(X,\mathbb{R})
\oplus \mathbb{R}n^2~.
\end{split}
\end{equation}
We consider the increasing filtration
$\mathcal{W}_{\bullet}$ on $A_{\mathbb{R}}$ given by
\begin{equation}\nonumber
0\subset 
\mathcal{W}_1=\mathbb{R}(n+cn^2)\oplus \mathbb{R}n^2\subset
\mathcal{W}_2=\mathcal{W}_1\oplus H^2_{\mathrm{prim}}(X,\mathbb{R})
=\mathcal{W}_3
\subset
\mathcal{W}_4=A_{\mathbb{R}}~.
\end{equation}
Notice that $\mathcal{W}_{k}\otimes \mathbb{C}=I_{k-1}$ holds.
We also consider the decreasing filtration $\mathcal{F}^{\bullet}$
on $A_{\mathbb{R}}\otimes \mathbb{C}=A$ given by
the degree of the cohomology:
\begin{equation}\nonumber
0\subset \mathcal{F}^{2}=H^0(X,\mathbb{C})\subset
\mathcal{F}^1=H^0(X,\mathbb{C})\oplus H^2(X,\mathbb{C})
\subset
\mathcal{F}^0=A~.
\end{equation}
Then it is not difficult to check the following
\begin{proposition}
(i) $(\mathcal{W}_{\bullet},\mathcal{F}^{\bullet})$
is an $\mathbb{R}$-mixed Hodge structure on $A_{\mathbb{R}}$.
\\
(ii) If we set
$$A^{p,q}=\mathcal{F}^p({\rm Gr}_k^{\mathcal{W}}A)
\cap \overline{\mathcal{F}^q({\rm Gr}_k^{\mathcal{W}}A)} \quad (p+q=k),$$
then the Hodge decomposition $A\cong \oplus_{1\leq k \leq 4} \oplus_{p+q=k} A^{p,q}$ 
is given by the following table.
\begin{equation}\nonumber
\begin{array}{|r|r|c|c|}\hline
 A^{p,k-p}    &p=2&1 &0\\\hline
k=1& &\mathbb{C}n &\mathbb{C}(n^2+\frac{1}{2c}n)\\
2     & 0&H^2_{\rm prim}&0\\
3     &0&0&0\\
4     & \mathbb{C}1&0&0\\\hline 
\end{array}~~~~
\end{equation}
\end{proposition}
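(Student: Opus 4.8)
The plan is to verify the defining property of an $\mathbb{R}$-mixed Hodge structure directly (see \cite[\S 3.1]{PetersSteenbrink}): namely that $\mathcal{F}^{\bullet}$ induces on each graded quotient ${\rm Gr}_k^{\mathcal{W}}A_{\mathbb{R}}$ a pure Hodge structure of weight $k$. Since $\mathcal{W}_k\otimes\mathbb{C}=I_{k-1}$, the complexified graded pieces are ${\rm Gr}_1^{\mathcal{W}}\otimes\mathbb{C}=I_0=\mathbb{C}n\oplus\mathbb{C}n^2$, ${\rm Gr}_2^{\mathcal{W}}\otimes\mathbb{C}=I_1/I_0=H^2_{\rm prim}(X,\mathbb{C})$, ${\rm Gr}_3^{\mathcal{W}}=0$, and ${\rm Gr}_4^{\mathcal{W}}\otimes\mathbb{C}=A/I_1=\mathbb{C}1$. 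I would treat these four pieces in turn; in each case I first compute the induced filtration $\mathcal{F}^p({\rm Gr}_k^{\mathcal{W}})$ as the image of $\mathcal{F}^p\cap I_{k-1}$, then identify the complex conjugation coming from the real structure $A_{\mathbb{R}}$, and finally read off $A^{p,q}=\mathcal{F}^p({\rm Gr}_k^{\mathcal{W}})\cap\overline{\mathcal{F}^q({\rm Gr}_k^{\mathcal{W}})}$.

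The heart of the matter, and the only place where the hypothesis that $c$ is nonzero and purely imaginary enters, is the weight-$1$ piece. Its real structure is $\mathbb{R}(n+cn^2)\oplus\mathbb{R}n^2$, so both $n+cn^2$ and $n^2$ are fixed by conjugation; combined with $\bar c=-c$ this forces $\bar n=n+2cn^2$. The induced Hodge filtration on $I_0$ is $0=\mathcal{F}^2\subset\mathcal{F}^1=\mathbb{C}n\subset\mathcal{F}^0=I_0$, whence $A^{1,0}=\mathbb{C}n$ and $A^{0,1}=\mathbb{C}\,\bar n=\mathbb{C}(n+2cn^2)=\mathbb{C}(n^2+\tfrac{1}{2c}n)$. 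Because $c\neq0$ these two lines are distinct, so they span $I_0$ and are exchanged by conjugation, which is exactly a pure Hodge structure of weight $1$. Note that for $c=0$ the two lines would coincide and the structure would degenerate; this is precisely the reason for twisting the real structure by $e^{cn}$.

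The remaining pieces are of Hodge--Tate type. For $k=2$ the induced filtration jumps from $0$ in degree $2$ to all of $H^2_{\rm prim}(X,\mathbb{C})$ in degree $1$, and since $H^2_{\rm prim}(X,\mathbb{R})$ sits inside $A_{\mathbb{R}}$ untwisted the conjugation is the standard one, so everything is of type $(1,1)$. For $k=4$ the real generator $1+cn+\tfrac{c^2}{2}n^2$ reduces to $1$ modulo $I_1$, so $\mathbb{C}1$ is real and of type $(2,2)$, while $k=3$ is trivial. This establishes (i). For (ii) one uses the evident splitting $A=\mathbb{C}1\oplus H^2_{\rm prim}(X,\mathbb{C})\oplus\mathbb{C}n\oplus\mathbb{C}n^2$ identifying $A$ with $\bigoplus_k{\rm Gr}_k^{\mathcal{W}}\otimes\mathbb{C}$, and reads the table off from the four computations above. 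I expect the bookkeeping of the conjugation on the weight-$1$ quotient to be the one genuinely subtle step, as it is the only place where the relevant real structure differs from the naive one on $H^{even}(X,\mathbb{R})$.
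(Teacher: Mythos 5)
Your proof is correct, and since the paper leaves this as a routine verification ("it is not difficult to check"), your direct check that $\mathcal{F}^{\bullet}$ induces a pure Hodge structure of weight $k$ on each ${\rm Gr}_k^{\mathcal{W}}$ is exactly the intended argument. The key computation $\bar n = n+2cn^2$ (from the reality of $n+cn^2$ and $n^2$ together with $\bar c=-c$), the resulting identification $A^{0,1}=\mathbb{C}(n^2+\tfrac{1}{2c}n)$, and the observation that $c\neq 0$ is what keeps $A^{1,0}$ and $A^{0,1}$ transverse are all accurate.
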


Next we explain that the pairings $(~~,~~)_{\bullet}$ induce 
a polarization on the above $\mathbb{R}$-mixed Hodge structure.
We assume the following conditions:
\begin{equation}\label{eq:conditions}
\begin{cases}
h^{2,0}(X)=h^{0,2}(X)=0, \\
b_1(X) ~~\mathrm{is~~even}, \\
\int_X n^2=K>0.
\end{cases}
\end{equation}
Then the pairing $\langle~~,~~\rangle$ is negative definite on $H^2_{\rm prim}(X, \mathbb{R})$.
See e.g. \cite[\S IV, Corollary 2.15]{BHPV}. 

For simplicity, we set $c=\sqrt{-1}$.
Let us define $(-1)^k$-symmetric bilinear form
$Q_k: {\rm Gr}_k^{\mathcal{W}}A_{\mathbb{R}} \times {\rm Gr}_k^{\mathcal{W}}A_{\mathbb{R}} \to \mathbb{R}$ by
$$
Q_k(x,y)=\frac{(\sqrt{-1})^k}{2}\left\{ 
(Cx\, ,\, y)_{k-1}+(-1)^k(x\, ,\, Cy)_{k-1}
\right\},
$$
where $C|_{A^{p,q}}=(\sqrt{-1})^{p-q}$ is the Weil operator.
Explicitly, $Q_k$ is given as follows:
$$
Q_1(n^2\, ,\, n+\sqrt{-1}n^2)=K,\qquad Q_1(n+\sqrt{-1}n^2\,, \, n+\sqrt{-1}n^2)=Q_1(n^2\, ,\, n^2)=0,
$$
$$
Q_2(x,y)=-\int_X  x\cup y~,
$$
$$
Q_4(1,1)=K~.
$$
Then the Hermitian forms $H_k(x,y):=Q_k(Cx,\overline{y})$ are positive definite, and we have
the following
\begin{proposition}
Under the conditions \eqref{eq:conditions}, 
$(\mathcal{W}_{\bullet}, \mathcal{F}^{\bullet}, Q_{\bullet})$ is a graded polarized 
$\mathbb{R}$-mixed Hodge structure on $A_{\mathbb{R}}$.
\end{proposition}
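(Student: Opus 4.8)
The plan is to verify, weight by weight, the three conditions that define a graded polarization of the $\mathbb{R}$-mixed Hodge structure $(\mathcal{W}_{\bullet},\mathcal{F}^{\bullet})$ established in the preceding proposition. Since $\mathrm{Gr}_k^{\mathcal{W}}A=0$ for $k=3$ and for $k\notin\{1,2,4\}$, it suffices to treat $k\in\{1,2,4\}$. For each such $k$ I must check: (a) that $Q_k$ is a real-valued $(-1)^k$-symmetric bilinear form on $\mathrm{Gr}_k^{\mathcal{W}}A_{\mathbb{R}}$; (b) the first Hodge--Riemann relation $Q_k(\mathcal{F}^p,\mathcal{F}^{k-p+1})=0$ for all $p$; and (c) the second Hodge--Riemann relation, namely that $H_k(x,y)=Q_k(Cx,\overline{y})$ is positive definite. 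The whole proposition is then a finite verification, for which the only data needed are the explicit Hodge decomposition in the table and the explicit pairings $(\,,\,)_{\bullet}$ of Example \ref{example-surface}.

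First I would record the $(-1)^k$-symmetry, which is built into the definition of $Q_k$: interchanging $x$ and $y$ and using that each $(\,,\,)_{k-1}$ is symmetric (the lemma of \S\ref{nilpotent-construction}) gives $Q_k(y,x)=(-1)^k Q_k(x,y)$ after collecting the two terms. To see that $Q_k$ is real-valued on $\mathrm{Gr}_k^{\mathcal{W}}A_{\mathbb{R}}$, I would evaluate the definition on a real basis: substituting $C=(\sqrt{-1})^{p-q}$ on $A^{p,q}$ (read off from the table) together with the values of $(\,,\,)_{k-1}$ from Example \ref{example-surface} reproduces exactly the displayed formulas for $Q_1$, $Q_2$ and $Q_4$, all of whose entries are real multiples of $K$ or of $\int_X x\cup y$. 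This step is routine but must be done with care, because the relevant real structure is the twisted one $A_{\mathbb{R}}=\{x\cup e^{cn}\}$, so that e.g. $n+\sqrt{-1}n^2$, and not $n$ itself, is the real class.

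Next I would dispatch the first Hodge--Riemann relation. For $k=2$ and $k=4$ the graded piece is pure of type $(1,1)$, respectively $(2,2)$, so $\mathcal{F}^p\mathrm{Gr}_k^{\mathcal{W}}A$ is either the whole piece or zero and the relation is automatic. Only $k=1$ requires an actual check: here $\mathcal{F}^1\mathrm{Gr}_1^{\mathcal{W}}A=A^{1,0}=\mathbb{C}n$, so one needs $Q_1(n,n)=0$. Writing $n=(n+\sqrt{-1}n^2)-\sqrt{-1}n^2$ and expanding by bilinearity and antisymmetry, the two cross terms cancel using $Q_1(n^2,n+\sqrt{-1}n^2)=K$, while the diagonal terms vanish, giving $Q_1(n,n)=0$ as required.

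The crux, and the step I expect to be the only substantive one, is the second Hodge--Riemann relation (positive definiteness of $H_k$). For the one-dimensional outer pieces $k=1$ and $k=4$ this is a rank-one computation that comes out positive precisely because of the hypothesis $K=\int_X n^2>0$ in \eqref{eq:conditions}. For the middle weight $k=2$ the Weil operator acts as the identity, so $H_2(x,x)=Q_2(x,\overline{x})=-\int_X x\cup\overline{x}$; writing $x=a+\sqrt{-1}b$ with $a,b\in H^2_{\mathrm{prim}}(X,\mathbb{R})$ this equals $-\langle a,a\rangle-\langle b,b\rangle$, so positivity of $H_2$ is equivalent to the intersection form being negative definite on $H^2_{\mathrm{prim}}(X,\mathbb{R})$. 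This is the content of the Hodge index theorem for surfaces, valid under the remaining hypotheses $h^{2,0}=h^{0,2}=0$ and $b_1(X)$ even in \eqref{eq:conditions}, which I would invoke through \cite[\S IV, Corollary 2.15]{BHPV}. Assembling (a)--(c) for $k=1,2,4$ shows that each $Q_k$ polarizes $\mathrm{Gr}_k^{\mathcal{W}}A$, which is exactly the assertion that $(\mathcal{W}_{\bullet},\mathcal{F}^{\bullet},Q_{\bullet})$ is a graded polarized $\mathbb{R}$-mixed Hodge structure on $A_{\mathbb{R}}$.
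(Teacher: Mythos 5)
Your verification is correct and follows the same route the paper takes: the paper states the explicit formulas for $Q_1,Q_2,Q_4$, reduces positivity in the middle weight to the negative definiteness of the intersection form on $H^2_{\mathrm{prim}}(X,\mathbb{R})$ via \cite[\S IV, Corollary 2.15]{BHPV} under the conditions \eqref{eq:conditions}, and leaves the remaining symmetry, reality, and Hodge--Riemann checks as a routine weight-by-weight computation, which is exactly what you carry out. The only substantive input in both arguments is the Hodge index theorem together with $K>0$, and you identify it correctly.
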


%%%%%%%%%%%%%%%%%%%%%%%%%%%%%%%%%%%
\section{Examples II:  cohomology of some non-compact orbifolds}
\label{example-orbifold}
Let $\mathcal{X}$ be an algebraic orbifold.
We denote by  $H^*_{\rm orb}(\mathcal{X}, \mathbb{C})$ 
the orbifold cohomology ring of $\mathcal{X}$ introduced by Chen--Ruan \cite{CR}.
If $\mathcal{X}$ is compact, $H^*_{\rm orb}(\mathcal{X}, \mathbb{C})$
has the Poincar\'e duality pairing $\langle~,~\rangle$ 
given by  
$$
\langle x , y \rangle =
\int_{\mathcal{X}}x \cup_{\rm orb} y
\qquad (x, y \in H^*_{\rm orb}(\mathcal{X}, \mathbb{C})),
$$
where $\cup_{\rm orb}$ denotes Chen--Ruan's product.
It makes  $H^*_{\rm orb}(\mathcal{X}, \mathbb{C})$ into a Frobenius algebra.
See \cite[Theorem 4.6.6]{CR}.
The following examples can be computed by the results of \cite{BCS, Jiang, Mann}. 

\begin{example}
Let $\mathcal{X}=[\mathbb{C}^d/\mathbb{Z}_d]$ be the quotient  orbifold 
of type $\frac{1}{d}(1,\ldots, 1)$ (in the notation of \cite{Reid})
and $\overline{\mathcal{X}}=\mathbb{P} (1,\ldots, 1, d)$
be the $d$-dimensional weighted projective space of weight $(1,\ldots, 1, d)$.
The latter is a compactification of the former.  The divisor 
$D=\overline{\mathcal{X}}\setminus \mathcal{X}$ is 
Poincar\'e dual to $dH \in H^2_{\rm orb} (\overline{\mathcal{X}}, \mathbb{C})$,
where $H:=c_1(\mathcal{O}_{\mathbb{P} (1,\ldots, 1, d)}(1))$.
If we denote by $E \in H^2(\overline{\mathcal{X}}, \mathbb{C})$
the image of the unit class on the twisted sector $\mathbb{P}(d)$ of age $1$ 
of the inertia orbifold of $\overline{\mathcal{X}}$,   
then $H^*_{\rm orb}(\overline{\mathcal{X}}, \mathbb{C})$
is isomorphic to $\mathbb{C}[H, E] / (H^d-E^d, HE)$
as an algebra. The pairings on $H^*_{\rm orb}(\overline{\mathcal{X}}, \mathbb{C})$ are
determined by 
\begin{equation}\nonumber
\begin{split}
\langle H^i , H^j\rangle 
&= \langle E^i , E^j \rangle
=\frac{1}{d}\delta_{i+j,d}\quad(0\leq i,j\leq d)~,\\
\langle H^i,E^j\rangle&=0\quad (1\leq i,j\leq d)~.
\end{split}\end{equation}
(i) First, we apply the nilpotent construction 
to $H^*_{\rm orb}(\overline{\mathcal{X}}, \mathbb{C})$
with the nilpotent element $n:=dH$ of order $d+1$. Then
we obtain the following Frobenius filtration on $H^*_{\rm orb}(\overline{\mathcal{X}}, \mathbb{C})$.
The filtration  $I_{\bullet}$ of ideals is
$$I_0=(H) \subset I_1=\cdots =I_d=(H ,E) \subset  I_{d+1}=H^*_{\rm orb}(\overline{\mathcal{X}}, \mathbb{C})~.$$
The pairings $(~~,~~)_{\bullet}$ on graded quotients are
$$
(H^i,H^j)_0=\frac{1}{d^2}\delta_{i+j,d+1}, \quad
(E^i, E^j)_1=\frac{1}{d}\delta_{i+j,d}, \quad
(1,1)_{d+1}=d^{d-1}.
$$
(ii) Next, we take the quotient of $H^*_{\rm orb}(\overline{\mathcal{X}}, \mathbb{C})$
by the ideal $I_0=(n)$. The quotient algebra is isomorphic to 
$H^*_{\rm orb}({\mathcal{X}}, \mathbb{C})\cong \mathbb{C}[E]/(E^d)$.
The induced Frobenius filtration $\left(I_{\bullet}', (~~,~~)'_{\bullet} \right)$  on the quotient is 
identical to the one on 
$H^*(\mathbb{P}^{d-1})$ given in Example \ref{example-Pn}
with $m=d$.
\end{example}

\begin{example}
Let $\mathcal{X}=[\mathbb{C}^3/\mathbb{Z}_4]$ be the quotient  orbifold 
of type $\frac{1}{4}(1,1, 2)$ 
and $\overline{\mathcal{X}}=\mathbb{P} (1, 1, 2, 4)$.
The latter is a compactification of the former.  The divisor 
$D=\overline{\mathcal{X}}\setminus \mathcal{X}$ is 
Poincar\'e dual to $4H \in H^2_{\rm orb} (\overline{\mathcal{X}}, \mathbb{C})$,
where $H:=c_1(\mathcal{O}_{\mathbb{P} (1,1, 2, 4)}(1))$.
If we denote by $E_1$ (resp. $E_2$) $\in H^2(\overline{\mathcal{X}}, \mathbb{C})$
the image of the unit class on the twisted sector $\mathbb{P}(4)$
(resp. $\mathbb{P}(2,4)$)  of age $1$ of the inertia orbifold of $\overline{\mathcal{X}}$, 
then $H^*_{\rm orb}(\overline{\mathcal{X}}, \mathbb{C})$
is isomorphic to $\mathbb{C}[H, E_1, E_2] / (H^2-E_2^2, HE_1, 2HE_2=E_1^2)$
as an algebra. The pairings on $H^*_{\rm orb}(\overline{\mathcal{X}}, \mathbb{C})$
are determined by 
$$
\langle H^i , H^j\rangle =\frac{1}{8}\delta_{i+j,3}
$$
together with  the relations in the algebra and the Frobenius property \eqref{A1}.\\
(i) Applying the nilpotent construction 
to $H^*_{\rm orb}(\overline{\mathcal{X}}, \mathbb{C})$
with the nilpotent element $n:=4H$ of order $4$, we obtain the 
following Frobenius filtration on $H^*_{\rm orb}(\overline{\mathcal{X}}, \mathbb{C})$.
The filtration  $I_{\bullet}$ of ideals is
$$I_0=(H) \subset I_1=(H ,E_1) \subset  
I_2=I_3=(H, E_1, E_2)\subset
I_4=H^*_{\rm orb}(\overline{\mathcal{X}}, \mathbb{C})~.$$
The pairings $(~~,~~)_{\bullet}$ on graded quotients are
$$
(H^i,H^j)_0=\frac{1}{32}\delta_{i+j,4}, \quad
(E_1, E_1E_2)_1=\frac{1}{4}, \quad
(E_2, E_2)_2=\frac{1}{2}, \quad
(1,1)_{4}=8.
$$
(ii) Taking the quotient of $H^*_{\rm orb}(\overline{\mathcal{X}}, \mathbb{C})$
by the ideal $I_0=(n)$, we obtain the following Frobenius filtration on  
$H^*_{\rm orb}({\mathcal{X}}, \mathbb{C})\cong \mathbb{C}[E_1, E_2]/(E_1^2, E_2^2)$:
$$
I_1'=\mathbb{C}E_1\oplus \mathbb{C}E_1E_2 \subset
I_2'=I_3'=I_1'\oplus \mathbb{C}E_2\subset
I_4'=H^*_{\rm orb}({\mathcal{X}}, \mathbb{C}).
$$
\end{example}

%%%%%%%%%%%%%%%%%%%%%%%%%%%%%%%%%%%%%%%%
\section{The mixed Frobenius structure}
\label{section:MHS}
%%%%%%%%%%%%%%%%%%%%%%%%%%%%%%%%%%%%%%%%
In this section, a manifold is a complex manifold.
The holomorphic tangent bundle (resp. the holomorphic cotangent bundle)
of a manifold $M$ is denoted by $TM$ (resp. $T^*M$).
All vector bundles on $M$ are assumed to be holomorphic.
The space of  holomorphic sections on an open set $U\subset M$ 
of a vector bundle $\mathbb{E}\to M$ is 
denoted by $\Gamma(U,\mathbb{E})$. 
We mean by $\Gamma(\mathbb{E})$ $\Gamma(U,\mathbb{E})$ for some open subset 
$U\subset M$. 
The dual vector bundle of $\mathbb{E}$ is denoted by $\mathbb{E}^{\vee}$.

%%%%%%%%%%%%%%%%%%%%%%%%
\subsection{Preliminary}
%%%%%%%%%%%%%%%%%%%%%%%%
We say that a subbundle $I\subset TM$ is {\it $E$-closed}
for a given global vector field $E$ on $M$
if 
\begin{equation}
[E,x]~\in~\Gamma(I)~\quad (x~\in~\Gamma(I))~. 
\end{equation}
Notice that when this holds, the Lie bracket
$[E,\ast]$ induces a derivation on 
the quotient bundle $TM/I$.

We say that a subbundle $I\subset TM$ is {\it $\nabla$-closed}
for an affine connection $\nabla$ if
\begin{equation}
\nabla_z \,x~\in~\Gamma(I)~\quad 
( x~\in \Gamma(I)~,
~~z~\in~\Gamma(TM))~.
\end{equation}
Notice that when this holds, $\nabla$ induces a connection on $TM/I$.

%%%%%%%%%%%%%%%%%%%%%%%%
\subsection{The Mixed Frobenius Structure}
%%%%%%%%%%%%%%%%%%%%%%
Recall the definition of the Frobenius structure \cite{Dubrovin}.
 
\begin{definition}\label{def:Frobenius-structure}
A Frobenius structure of  charge $D\in \mathbb{C}$ on a manifold $M$
consists of a structure of the Frobenius algebra 
$(A_t,\langle~,~\rangle_t)$
on each tangent space $T_t M$ ($t\in M$) depending complex analytically on $t$,
and a globally defined vector field $E$ on $M$ called the Euler vector field
satisfying the following conditions.
\begin{itemize}
\item The Levi--Civita connection $\nabla$ of the metric $\langle~,~\rangle$ is flat
and the unit vector field $e$ is $\nabla$-flat (i.e. $\nabla e=0$).
\item
The bundle homomorphism $c: TM^{\otimes 3}\to \mathcal{O}_M$
defined by $c(x,y,z)=\langle x,y\circ z\rangle$ satisfies
\begin{equation}\label{c1}
(\nabla_w c) (x,y,z)=(\nabla_z c)(x,y,w)~,
\end{equation}
where $\nabla$ is the induced connection on $T^*M^{\otimes 3}$.\footnote{
This condition together with the Frobenius property \eqref{A1}
implies that $\nabla c: TM^{\otimes 4} \to \mathcal{O}_M$ is symmetric.}
\item The Euler vector field $E$ satisfies $\nabla\nabla E=0$ 
(here the leftmost $\nabla$  is the induced connection on $\mathrm{End}\,(TM)$)
and 
\begin{eqnarray}
\label{E1}
&[E,x\circ y]-[E,x]\circ y-x\circ[E,y]=x\circ y \quad 
(x,y\in \Gamma(TM))~,
\\
\label{E3}
&E\langle x,y\rangle-
\langle\, [E,x],y\rangle-
\langle x,[E,y]\,\rangle
 =(2-D)\langle x,y\rangle~\quad
(x,y\in \Gamma(TM))~.
\end{eqnarray}
\end{itemize}
\end{definition}

Now we generalize the Frobenius structure
to incorporate the Frobenius filtration.

\begin{definition}\label{def:MFS}
A mixed Frobenius structure $(\nabla,~E,~\circ, ~I_{\bullet},~(~,~)_{\bullet})$
on a manifold $M$ of  reference charge $D\in \mathbb{C}$
consists of 
\begin{itemize}
\item a  torsion free, flat connection $\nabla$ on the tangent bundle $TM$,
\item a global vector field $E$ satisfying $\nabla\nabla E=0$ 
called the Euler vector field,
\item
a fiberwise multiplication on every tangent space $T_t M$
depending complex analytically 
on $t$ such that  the unit vector field $e$ is $\nabla$-flat,
\item 
a Frobenius filtration 
$(I_{\bullet,t}~,~~ (~,~)_{\bullet,t})$
on every tangent space $T_t M$ depending complex analytically on $t$
such that $I_k$ are $\nabla$-closed and
$E$-closed subbundles of $TM$.
\end{itemize}
These must
satisfy the following 
compatibility  conditions.
Let $\pi_k:TM\to TM/I_{k-1}$ be the quotient map and let
$\circ_k$ be the induced multiplication on the quotient bundle $TM/I_{k-1}$.
Let $[E,\ast]_k$ and $\nabla^{(k)}$ 
be the derivation and the connection on $TM/I_{k-1}$ induced from $[E,\ast]$
and $\nabla$.
%%%%%%%%%%%%%%%%%
\begin{itemize}
\item The connection $\nabla$ and the bilinear forms $(~,~)_k$ must be 
compatible in the sense that
\begin{equation}\label{eta-const}
z( x,y)_k=(\nabla^{(k)}_z x,y)_k+
 ( x,\nabla^{(k)}_z y )_k~\quad
(x,y\,\in\,\Gamma(I_k/I_{k-1}),~
z\,\in\, \Gamma(TM))~.
\end{equation}
\item 
 The vector bundle homomorphism $c_k: I_k/I_{k-1} \otimes I_k/I_{k-1}\otimes
 TM \rightarrow \mathcal{O}_M$
defined by
\begin{equation}
c_k(x,y,z):=( x, ~\pi_k(z)\circ_k y )_k~,
\end{equation}
must satisfy
\begin{equation}\label{c-symmetry}
(\nabla^{(k)}_w c_k)(x,y,z)=(\nabla^{(k)}_z c_k)(x,y,w)~\quad
(x,y\in \Gamma(I_k/I_{k-1}),
z,w\in \Gamma(TM))~.
\end{equation}
Here $\nabla^{(k)}$ is the induced connection on 
$(I_k/I_{k-1})^{\vee}\otimes (I_k/I_{k-1})^{\vee}\otimes T^*M$.
\item The Euler vector field $E$ must satisfy
\begin{eqnarray}
\label{E-multiplication}
&[E, x\circ_k \pi_k(z)]_k-x\circ_k\pi_k([E,z])
 -[E,x]_k\circ_k\pi_k(z)=x\circ_k\pi_k(z)~,
\\
\label{E-metric}
&E (x,y)_k-( [E,x]_{k},y)_k
-( x,[E,y]_{k})_k=(2-D+k)( x,y)_k~
\\
&\nonumber\qquad
(x,y\,\in\,\Gamma(I_k/I_{k-1}),~
z\,\in\, \Gamma(TM))~.
\end{eqnarray}
\end{itemize}
\end{definition}

%%%%%%%%%%%%%%%%%%%%%%%%%%%%%
\subsection{Flat coordinates}
%%%%%%%%%%%%%%%%%%%%%%%%%%%%%
We write the conditions for the mixed Frobenius structure
in a local coordinate expression.

Let $m_k$ ($k\in\mathbb{Z}$) be the rank of $I_k/I_{k-1}$.
The flatness, the
torsion-free condition for $\nabla$
and the $\nabla$-closedness of $I_k$'s imply that
there exists on $M$ a system of local coordinates 
$t^{ka}$ ($k\in \mathbb{Z}$, $1\leq a\leq m_k$)
which satisfies the following two conditions:
$$
\nabla \frac{\partial}{\partial t^{ka}}=0 \qquad 
(k\in \mathbb{Z},~1\leq a\leq m_k)~,
$$
\begin{equation}\label{flat-frame}
\left \{
\frac{\partial}{\partial t^{la}} ~{\Big |}~ l\leq k, 1\leq a\leq m_l
\right\}\quad
\text{is a local frame of $I_k$.}
\end{equation}
Now assume that  one such system of flat coordinates is fixed.
Then we can naturally regard $
\{\frac{\partial}{\partial t^{ka}} \}_{1\leq a\leq m_k}
$ as 
a local frame of 
$I_k/I_{k-1}$.
We sometimes use the shorthand notation  $\partial_{ka}=\frac{\partial}{\partial t^{ka}}$.

Let $\eta^{(k)}$ be the matrix representation of $(~,~)_k$: 
\begin{equation}\nonumber
\eta_{ka,kb}=\Big( \frac{\partial}{\partial t^{ka}}\, ,\,
 \frac{\partial}{\partial t^{kb}}\Big)_k
\qquad (1\leq a,b\leq m_k)~,\quad
\eta^{(k)}:=(\eta_{ka,kb})~.
\end{equation}
Let $C_{{ka},{lb}}^{jc}$ be the structure constant of the multiplication:
\begin{equation}\nonumber
\frac{\partial}{\partial t^{ka}}\circ
\frac{\partial}{\partial t^{lb}}=
\sum_{j\in \mathbb{Z},\,1\leq c\leq m_j} 
C_{{ka},{lb}}^{jc}\,\frac{\partial}{\partial t^{jc}}~.
\end{equation}
Let us write the Euler vector field $E$ as
\begin{equation}\label{defEka}
E=\sum_{k\in\mathbb{Z},\,1\leq a\leq m_k}
E^{ka}\, \frac{\partial}{\partial t^{ka}}~.
\end{equation}
Then the conditions are summarized as follows.
(We omit the associativity and the commutativity conditions.)
\begin{itemize}
%%%%%
\item 
$\eta^{(k)}$ is a symmetric invertible matrix 
since $(~,~)_k$ is symmetric
and nondegenerate. It is a constant matrix by (\ref{eta-const}).
\item 
The condition that $I_k$ is an ideal is  equivalent to
$C_{ka,lb}^{jc}=0$ if $k<j$  or $l<j$.
The compatibility of $(~,~)_k$ and the multiplication (see \eqref{I1})
is equivalent
to 
\begin{equation}
\sum_{1\leq d\leq m_k}C_{lc,kb}^{kd}\eta_{kd,ka}
=\sum_{1\leq d\leq m_k}C_{lc,ka}^{kd}\eta_{kd,kb}~.
\end{equation}
\item The condition \eqref{c-symmetry} (with the nondegeneracy of $\eta^{(k)}$)
is equivalent to
\begin{equation}\label{c-symmetry2}
\begin{split}
&\frac{\partial}{\partial {t^{jd}}}C_{ka,lc}^{kb}=
\frac{\partial}{\partial {t^{lc}}}C_{ka,jd}^{kb}
\qquad(l,j\geq k)~,
\\
&\frac{\partial}{\partial {t^{jd}}}C_{ka,lc}^{kb}=0
\qquad(j<k)~.
\end{split}
\end{equation}
%%%%%
\item
$\nabla\nabla E=0$ and  $E$-closedness of $I_k$'s imply
\begin{equation}\label{E-linear}
\frac{\partial^2}{\partial t^{jc} \partial t^{lb}}\,
E^{ka}=0\quad ({}^{\forall} l,j)~,\quad
\frac{\partial}{\partial t^{lb}}\, E^{ka}=0 \quad (l<k)~.
\end{equation}
\item
Eq.\eqref{E-multiplication} together with \eqref{c-symmetry2}, \eqref{E-linear}
is equivalent to
\begin{equation}\label{E-multiplication2}
\sum_{\begin{subarray}{c}j\geq k,\\1\leq d\leq m_j\end{subarray}}
\partial_{la}(E^{jd}C_{jd,kb}^{kc})
-\sum_{1\leq d\leq m_k}C_{la,kb}^{kd}(\partial_{kd}E^{kc})
+\sum_{1\leq d\leq m_k}C_{la,kd}^{kc}(\partial_{kb}E^{kd})
=C_{la,kb}^{kc}~.
\end{equation}
\item
Eq.\eqref{E-metric} is equivalent to
\begin{equation}\label{E-metric2}
\sum_{1\leq c\leq m_k} \big( \eta_{kb,kc}
(\partial_{ka}E^{kc})+\eta_{ka,kc}(\partial_{kb}E^{kc})\big)
=(2-D+k) \, \eta_{ka,kb}~.
\end{equation}
\end{itemize}

%%%%%%%%%%%%%%%%%%%%%%%%%
\subsection{Mixed Frobenius structure on a transversal slice.}
\label{sec:transversal}
%%%%%%%%%%%%%%%%%%%%%%%%%
Let $(\nabla,~E,~\circ, ~I_{\bullet},~(~,~)_{\bullet})$ be a
mixed Frobenius structure of reference charge $D$ on $M$.
Let $m_k$ be the rank of $I_k/I_{k-1}$.
Fix a system of flat coordinates $t^{ka}$ 
($k\in \mathbb{Z}$, $1\leq a\leq m_k$) on $M$
(see \eqref{flat-frame}).
Since each subbundle $I_k\subset TM$ is involutive,
it defines a foliation on $M$. Let $M_k$ be a leaf, i.e.
in a neighborhood $U\subset M$ 
where the coordinates $t^{ka}$ are well-defined,
\begin{equation}\label{eq:leaf}
M_k\cap U=\{t^{la}=\text{constant}\mid l>k\}~.
\end{equation}
Let $$
M^{(k+1)}=\{t^{la}=\text{constant}~\mid~ l\leq k,~1\leq a\leq m_l \} \subset 
U~.
$$
This  is transversal to the leaf $M_k$.
Then, locally on $U$,
we have
%this determines 
the direct sum decomposition $TM=TM^{(k+1)}\oplus I_k$
and %we have 
obtain
the isomorphism $TM^{(k+1)}\cong TM/I_k$.
Let $E^{(k+1)}$ be the vector field on $M^{(k+1)}$ induced from 
the Euler vector field $E$: if we use  the flat coordinate expression 
in \eqref{defEka},
$$
E^{(k+1)}=\sum_{\begin{subarray}{c}l\geq k+1,\\
1\leq a\leq m_l\end{subarray}}
E^{la}\, \frac{\partial}{\partial t^{la}}~
\quad \Big(\text{just drop the terms $\frac{\partial}{\partial t^{ma}}$ for $m\leq k$ in $E$} \Big).
$$
This is a well-defined vector field on $M^{(k+1)}$ since 
$E^{la}$ ($l\geq k+1$) is independent of $t^{mb}$ ($m\leq k$), see \eqref{E-linear}.

\begin{lemma}\label{lem:quotient-construction2}
$(\nabla^{(k+1)},~E^{(k+1)},~\circ_{k+1}, ~I_{\bullet}/I_k,~(~,~)_{\bullet})$ is a
mixed Frobenius structure of reference charge $D$ on $M^{(k+1)}$.
\end{lemma}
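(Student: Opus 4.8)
The plan is to reduce the statement to the flat-coordinate reformulation of the axioms and to check that, for the induced data, each coordinate condition is nothing but the restriction to $M^{(k+1)}$ of the corresponding condition already known on $M$ for the filtration levels $l\geq k+1$. First I would fix coordinates: the functions $t^{la}$ with $l\geq k+1$ restrict to local coordinates on $M^{(k+1)}$, and under $TM^{(k+1)}\cong TM/I_k$ the frame $\{\partial_{la}\}_{l\geq k+1}$ is $\nabla^{(k+1)}$-flat and adapted to $I_\bullet/I_k$ in the sense of \eqref{flat-frame}, with graded piece at level $l$ canonically equal to $I_l/I_{l-1}$ carrying the form $(~,~)_l$. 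Consequently $\nabla^{(k+1)}$ is torsion free and flat, each $\eta^{(l)}$ ($l\geq k+1$) is the same constant invertible symmetric matrix as on $M$, and the image $\pi_{k+1}(e)$ of the unit $e$ is $\nabla^{(k+1)}$-flat because $e$ is $\nabla$-flat and $I_k$ is $\nabla$-closed. That $I_\bullet/I_k$ is fiberwise a Frobenius filtration is Lemma \ref{lem:quotient}, and its subbundles are $\nabla^{(k+1)}$- and $E^{(k+1)}$-closed since the $I_l$ are $\nabla$- and $E$-closed.

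Next I would verify the remaining compatibility conditions in their coordinate forms \eqref{c-symmetry2}, \eqref{E-linear}, \eqref{E-multiplication2}, \eqref{E-metric2} for all levels $l\geq k+1$. The key observation is that, once the free indices are restricted to levels $\geq k+1$, every one of these identities on $M$ involves only derivatives $\partial_{la}$ with $l\geq k+1$ and only structure constants whose target (upper) level is $\geq k+1$: the ideal relation $C_{ka,lb}^{jc}=0$ for $j>\min(k,l)$ confines the summation over $j$ in \eqref{E-multiplication2} to $j\geq l\geq k+1$, so that the components $E^{jd}$ with $j\leq k$, which are dropped in forming $E^{(k+1)}$, never enter. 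Hence each identity restricts verbatim to the corresponding identity for the induced structure. In particular the charge is preserved: since the level-$l$ graded piece of $I_\bullet/I_k$ is again $I_l/I_{l-1}$, the factor $(2-D+l)$ on the right of \eqref{E-metric2} is unchanged, so the induced structure again has reference charge $D$.

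The only points genuinely using the hypotheses---and the place I expect the (modest) difficulty---are the well-definedness of the induced data on the slice and the independence of its coefficients from the suppressed coordinates $t^{mb}$ ($m\leq k$). For $E^{(k+1)}$ this is exactly the second relation of \eqref{E-linear}, giving $\partial_{mb}E^{la}=0$ for $m\leq k<l$ (as noted after the definition of $E^{(k+1)}$), while the first relation of \eqref{E-linear} yields $\nabla^{(k+1)}\nabla^{(k+1)}E^{(k+1)}=0$; for the structure constants $C_{la,jc}^{lb}$ appearing in \eqref{c-symmetry2} the second relation there gives $\partial_{mb}C_{la,jc}^{lb}=0$ for $m<l$, hence for $m\leq k$. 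Once this is checked the remainder is bookkeeping, and the main obstacle is simply to organize the index ranges carefully enough to confirm that passing to $M^{(k+1)}$ neither introduces dependence on the suppressed coordinates nor discards a term that the quotient conditions would require.
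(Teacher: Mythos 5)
Your proof is correct and follows essentially the route the paper intends: the paper states this lemma without an explicit proof, relying on the coordinate formulation of the axioms in \S 5.3 and the observations preceding the lemma (well-definedness of $E^{(k+1)}$ via \eqref{E-linear}, the identification $TM^{(k+1)}\cong TM/I_k$), and your verification that each coordinate condition for levels $l\geq k+1$ restricts verbatim to the slice is precisely the omitted bookkeeping.
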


%%%%%%%%%%%%%%%%%%%%%%%%%%%%%%%%%%%%%%%%%%%%%%%%%%%% 
\section{Construction by a nilpotent vector field}
\label{section:nilpotent-quotient-construction2}
%%%%%%%%%%%%%%%%%%%%%%%%%%%%%%%%%%%%%%%%%%%%%%%%%%%%
\subsection{Construction by a nilpotent vector field}
\label{ffm}

Let $(\circ,~\langle~,~\rangle,~E)$ be a Frobenius structure on a
manifold $M$ of  charge $D$. Let $\nabla$ be the Levi--Civita connection of 
the metric $\langle ~,~\rangle$.
Assume that there exists
a global vector field $n$  satisfying the following conditions:
\begin{eqnarray}
\nonumber
&n^d=\underbrace{n\circ n\circ \cdots \circ n}_{\text{$d$ times}}= 0~,\quad n^{d-1}\neq 0~,\\
&\label{condition-E-n}
[E,n]=0~,
\\
&\label{condition-nabla-n}
\nabla(n\circ x)=n\circ \nabla x~\quad(x\in \Gamma(TM))~.
\end{eqnarray}

The condition \eqref{condition-nabla-n} implies that 
the map $n\, \circ :TM\to TM$ is a flat bundle homomorphism.
So the ranks of the kernel and the image of 
$n\, \circ$ are constant.
As in \eqref{nilp1},
we define
\begin{equation}\nonumber
I=\mathrm{Im}\,(n\, \circ)~, \quad
J_k=\mathrm{Ker}\,(n^k\, \circ) \quad (1\leq k\leq d)~.
\end{equation}
Let $I_0=I$, $I_k=I+J_k$.

\begin{lemma} $I,J_k,I_k$ are $\nabla$-closed and $E$-closed.
\end{lemma}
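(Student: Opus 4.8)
The plan is to reduce everything to two iterated identities that promote the single-step hypotheses \eqref{condition-nabla-n} and \eqref{E1} to statements about $n^k\circ$. First I would establish, by an immediate induction on $k$ from \eqref{condition-nabla-n}, that
$$\nabla_z(n^k\circ x)=n^k\circ\nabla_z x\qquad(x,z\in\Gamma(TM)),$$
so that $n^k\circ$ commutes with $\nabla$ exactly as $n\circ$ does. Second, using the Euler identity \eqref{E1} together with $[E,n]=0$ from \eqref{condition-E-n}, the one-step computation gives $[E,n\circ x]=n\circ([E,x]+x)$, and iterating yields
$$[E,n^k\circ x]=n^k\circ\bigl([E,x]+k\,x\bigr).$$
The inductive step here is the one place where the non-derivation anomaly $x\circ y$ in \eqref{E1} must be tracked: it accumulates into the coefficient $k$, but—\emph{crucially}—it always remains attached to a factor $n^k\circ x$, which is precisely what vanishes on the kernel.

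With these in hand the four closedness statements are short. For $\nabla$-closedness of $I=\mathrm{Im}(n\circ)$: since $n\circ$ is flat it has constant rank, so locally any section $x\in\Gamma(I)$ can be written $x=n\circ y$, and then $\nabla_z x=n\circ\nabla_z y\in\Gamma(I)$. For $\nabla$-closedness of $J_k=\mathrm{Ker}(n^k\circ)$: if $n^k\circ x=0$ then $n^k\circ\nabla_z x=\nabla_z(n^k\circ x)=0$, so $\nabla_z x\in\Gamma(J_k)$. The $E$-closedness arguments are parallel. For $I$: writing $x=n\circ y$, the first displayed identity gives $[E,x]=n\circ([E,y]+y)\in\Gamma(I)$. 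For $J_k$: if $n^k\circ x=0$ then $[E,n^k\circ x]=0$, so the second identity forces $n^k\circ([E,x]+k\,x)=0$, i.e. $n^k\circ[E,x]=0$, whence $[E,x]\in\Gamma(J_k)$.

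Finally, $I_k=I+J_k$ is handled by linearity: a local section of $I_k$ decomposes as a sum of a section of $I$ and a section of $J_k$, and both $\nabla_z$ and $[E,\cdot]$ return each summand to $I$ or to $J_k$, hence to $I_k$. The only genuine subtlety I anticipate is bundle-theoretic rather than computational: one must know that $I$, $J_k$, and $I_k$ are honest subbundles of locally constant rank, so that the phrase ``locally $x=n\circ y$'', the decomposition of sections of $I_k$, and the very notions of $\nabla$- and $E$-closedness all make sense. Constant rank of $I$ and $J_k$ is immediate from the flatness of $n\circ$ asserted after \eqref{condition-nabla-n}; for the sum $I_k$ I would note that its fiberwise rank is governed by the purely algebraic construction of \S\ref{nilpotent-construction} applied to the tangent algebra, which is constant along $M$ because the pointwise algebra structure is locally constant in a flat frame. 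This rank-constancy is the step I would take the most care over; the rest is the bookkeeping sketched above.
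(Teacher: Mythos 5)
Your proof is correct and follows essentially the same route as the paper: the paper's version of your second iterated identity is $[E,n^k]=(k-1)n^k$, which combined with \eqref{E1} gives exactly your $[E,n^k\circ x]=n^k\circ([E,x]+k\,x)$. The rank-constancy you rightly flag is recorded in the paper just before the lemma (flatness of $n\,\circ$ forces constant rank of the kernel and image of each $n^k\circ$), and the same parallel-transport argument gives constant rank for the sum $I+J_k$ — not because the algebra structure is locally constant (it is not), but because $n\circ$ is a parallel endomorphism; the paper leaves the $I_k$ case implicit.
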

\begin{proof}
1. $I$ is $\nabla$-closed: for $x=n\circ \tilde{x}\in \Gamma(I)$
and any vector field $y$,
$$\nabla_y \,x= \nabla_y (n\circ \tilde{x})
\stackrel{\eqref{condition-nabla-n}}{=}n\circ \nabla_y \,\tilde{x} ~\in ~\Gamma(I)~.$$
2. $J_k$ is $\nabla$-closed:
$$
x\in \Gamma(J_k)~~\Leftrightarrow~~n^k\circ x=0~~\Rightarrow~~
n^k\circ \nabla_y\,x\stackrel{\eqref{condition-nabla-n}}{=}\nabla_y(n^k\circ x)=0
~~\Leftrightarrow~~\nabla_y\, x~\in~ \Gamma(J_k)~.
$$
3. $I$ is $E$-closed: for $x=n\circ \tilde{x}\in \Gamma(I)$, we have
$$
[E,x]=[E,n\circ \tilde{x}]\stackrel{\eqref{E1}}{=}
[E,n]\circ \tilde{x}+n\circ[E,\tilde{x}]+n\circ\tilde{x}
\stackrel{\eqref{condition-E-n}}{=}n\circ ([E,\tilde{x}]+\tilde{x})~\in ~\Gamma(I)~.
$$
4. $J_k$ is $E$-closed: 
by \eqref{E1}, \eqref{condition-E-n} and the induction on $k$,
we can show that
\begin{equation} \label{E2}
[E,n^k]=(k-1)n^k~\qquad (1\leq k\leq d)~.
\end{equation}
We have
$$
x\in \Gamma(J_k)~~\Leftrightarrow~~n^k\circ x=0~~\Rightarrow~~
n^k\circ [E,x]\stackrel{\eqref{E1}}{=}[E,n^k\circ x]-[E,n^k]\circ x-n^k\circ x
\stackrel{\eqref{E2}}{=}
0~.
$$
\end{proof}

Define the bilinear form $(~,~)_k$ on $I_k/I_{k-1}$ 
by \eqref{nilp2}. Then we have a Frobenius filtration of subbundles
$(I_{\bullet},\,(~,~)_{\bullet}\,)$ on $TM$.

\begin{theorem}\label{thm:nilpotent2}
If $(\circ,\langle~,~\rangle,E)$ is a Frobenius structure on $M$
of charge $D$ with a nilpotent global vector field $n$
satisfying \eqref{condition-E-n} and \eqref{condition-nabla-n},
then
the Levi--Civita connection $\nabla$,
the Euler vector field $E$, the multiplication $\circ$
and the Frobenius filtration $(I_{\bullet},\,(~,~)_{\bullet}\,)$
(defined in Eq.\eqref{nilp1} and Definition \ref{nilp2})
form
a mixed Frobenius structure on $M$ of  reference charge $D+1$.
\end{theorem}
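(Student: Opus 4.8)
The plan is to verify, one group at a time, the four compatibility conditions of Definition~\ref{def:MFS}, since the underlying data have essentially been assembled already. The fibrewise Frobenius filtration $(I_\bullet,(~,~)_\bullet)$ is supplied by Proposition~\ref{prop-nilp}; the subbundles $I_k$ are $\nabla$-closed and $E$-closed by the preceding Lemma (and are genuine subbundles because $n\circ$ is flat, hence of constant rank); and the connection axioms—$\nabla$ flat and torsion free, $\nabla\nabla E=0$, and $e$ being $\nabla$-flat—are inherited verbatim from the ambient Frobenius structure. Thus the entire task is to establish \eqref{eta-const}, \eqref{c-symmetry}, \eqref{E-multiplication}, and \eqref{E-metric}.

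The first three are ``flat'' in nature and reduce to the ambient axioms by lifting to representatives. For \eqref{eta-const} with $k>0$ I would write $(x,y)_k=\langle\tilde x,\tilde y\circ n^{k-1}\rangle$ with $\tilde x,\tilde y\in J_k$, differentiate via metric compatibility of $\nabla$, and commute the derivative past $n^{k-1}\circ$ using the iterate $\nabla_z(\tilde y\circ n^{k-1})=(\nabla_z\tilde y)\circ n^{k-1}$ of \eqref{condition-nabla-n}; since $J_k$ is $\nabla$-closed, $\nabla_z\tilde x,\nabla_z\tilde y$ again represent $\nabla^{(k)}_z x,\nabla^{(k)}_z y$, giving the claim. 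For \eqref{c-symmetry} the key identity is $c_k(x,y,z)=c(\tilde x,\tilde y\circ n^{k-1},z)$, valid because $z\circ\tilde y\in J_k$ whenever $\tilde y\in J_k$; this yields $(\nabla^{(k)}_w c_k)(x,y,z)=(\nabla_w c)(\tilde x,\tilde y\circ n^{k-1},z)$, whose symmetry in $w,z$ is exactly \eqref{c1}. For \eqref{E-multiplication} I would lift $x$ and $z$ and simply project \eqref{E1}, using that $I_{k-1}$ is both an ideal and $E$-closed so that $\circ_k$ and $[E,\cdot]_k$ descend; the identity then follows termwise. The $k=0$ case of each is handled identically, using $x=n\circ\tilde x$, $y=n\circ\tilde y$ and the representative $z\circ\tilde y$ of $z\circ y$.

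The only condition requiring real work—and the origin of the charge shift from $D$ to $D+1$—is \eqref{E-metric}. The engine is the relation $[E,n^k]=(k-1)n^k$ of \eqref{E2}, which moreover extends to $k=0$: putting $y=e$ in \eqref{E1} forces $[E,e]=-e$, so the formula holds with $k=0$ as well. For $k>0$ I would compute $E(x,y)_k=E\langle\tilde x,\tilde y\circ n^{k-1}\rangle$ through \eqref{E3}, expand $[E,\tilde y\circ n^{k-1}]$ by \eqref{E1} together with $[E,n^{k-1}]=(k-2)n^{k-1}$, and collect terms. The scalar coefficient produced is $(k-1)+(2-D)=2-(D+1)+k$, which is precisely $(2-D'+k)$ for the reference charge $D'=D+1$. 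The $k=0$ case is structurally different because $I_0=\mathrm{Im}(n\circ)$ rather than a kernel: here one records that the representative of $[E,x]_0$ is $[E,\tilde x]+\tilde x$, coming from $[E,n\circ\tilde x]=n\circ([E,\tilde x]+\tilde x)$ via \eqref{E1} and \eqref{condition-E-n}, and a short computation again produces the coefficient $1-D=2-(D+1)+0$.

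I expect the main obstacle to be bookkeeping rather than ideas: keeping the representative-level computations consistent with the quotient-level statements—above all, confirming that each chosen lift lands in the correct $J_k$ or in $\mathrm{Im}(n\circ)$ and represents the intended class—and correctly attributing the $+1$ shift to the combined effect of the $+x\circ y$ term of \eqref{E1} and the power $n^{k-1}$ in the pairing. Once $[E,n^k]=(k-1)n^k$ is available for all $0\le k\le d$, every coefficient in \eqref{E-metric} falls out mechanically, and no new ingredient beyond the closedness Lemma and the structure relations is needed.
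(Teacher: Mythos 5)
Your proposal is correct and follows essentially the same route as the paper's proof: the structural data and closedness are inherited from the ambient Frobenius structure, conditions \eqref{eta-const}, \eqref{c-symmetry}, \eqref{E-multiplication} are checked by lifting to representatives in $J_k$ (resp.\ writing $x=n\circ\tilde x$ for $k=0$) and invoking \eqref{condition-nabla-n}, \eqref{c1}, \eqref{E1}, and the charge shift in \eqref{E-metric} comes from combining \eqref{E3} with $[E,n^{k-1}]=(k-2)n^{k-1}$ exactly as you describe, yielding $1-D+k=2-(D+1)+k$. The coefficient bookkeeping in both the $k>0$ and $k=0$ cases matches the paper's computation.
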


\begin{proof}
We have to check the conditions \eqref{eta-const}, \eqref{c-symmetry},
\eqref{E-multiplication} and \eqref{E-metric}.

We first show the case $k\geq 1$.
For $x,y\in \Gamma(I_k/I_{k-1})$ $(k\geq 1)$, 
take representatives $\tilde{x},\tilde{y}\in \Gamma(J_k)$.
Let $z,w $ be vector fields on $M$. 

Eq. \eqref{eta-const} :
\begin{equation}\nonumber
\begin{split}
z(x,y)_k=&z\langle \tilde{x},\tilde{y}\circ n^{k-1} \rangle
\\
=&\langle \nabla_z\,\tilde{x}, \tilde{y}\circ n^{k-1} \rangle
+\langle \tilde{x},\nabla_z (\tilde{y}\circ n^{k-1})\rangle
\\
\stackrel{\eqref{condition-nabla-n}}{=}&
\langle \nabla_z\,\tilde{x}, \tilde{y}\circ n^{k-1} \rangle
+\langle \tilde{x},(\nabla_z\, \tilde{y})\circ n^{k-1}\rangle
\\
=&(\nabla_z^{(k)}\, x,y)_k+(x,\nabla_z^{(k)}\,y)_k~.
\end{split}
\end{equation}

Eq. \eqref{c-symmetry} :
\begin{equation}\nonumber
\begin{split}
(\nabla_w c_k)(x,y,z)
=&w(x,~y\circ_k \pi_k(z))_k-(\nabla_w^{(k)}\,x,~y\circ_k\pi_k(z))_k
\\
&-(x,~\pi_k(z)\circ_k \nabla_w^{(k)}\,y)_k-(x,~y\circ_k \pi_k(\nabla_w \,z))_k
\\
=&w\langle \tilde{x},~\tilde{y}\circ z \circ n^{k-1}\rangle
-\langle \nabla_w\, \tilde{x},~\tilde{y}\circ n^{k-1}\circ z \rangle
\\
&-\langle \tilde{x},~z\circ n^{k-1}\circ 
\nabla_w\,\tilde{y}\rangle 
-\langle \tilde{x},~\tilde{y}\circ n^{k-1}\circ \nabla_w \,z \rangle
\\
\stackrel{\eqref{condition-nabla-n}}{=}&(\nabla_w \,c)(\tilde{x},n^{k-1}\circ \tilde{y},z)\stackrel{\eqref{c1}}{=}(\nabla_z \,c)(\tilde{x},n^{k-1}\circ \tilde{y},w)
\\
=&(\nabla_z c_k)(x,y,w)~.
\end{split}
\end{equation}

Eq. \eqref{E-multiplication}: by \eqref{E1}, we have
\begin{equation}\nonumber
[E,\tilde{x}\circ z]-\tilde{x}\circ [E,z]
-[E,\tilde{x}]\circ z=\tilde{x}\circ z~.
\end{equation}
Applying the projection $\pi_k: TM\to TM/I_{k-1}$ to the both sides,
we obtain \eqref{E-multiplication}.

Eq. \eqref{E-metric} :
\begin{equation}\nonumber
\begin{split}
&E\langle \tilde{x},\tilde{y}\circ n^{k-1} \rangle-
\langle\, [E,\tilde{x}],\tilde{y}\circ n^{k-1}\rangle-
\langle x,[E,\tilde{y}]\circ n^{k-1}
\rangle
\\ \stackrel{\eqref{E1}}{=}&
E\langle \tilde{x},\tilde{y}\circ n^{k-1} \rangle-
\langle\, [E,\tilde{x}],\tilde{y}\circ n^{k-1}\rangle-
\langle \tilde{x},\,[E,n^{k-1}\circ \tilde{y}]-\tilde{y}\circ [E,n^{k-1}]
-\tilde{y}\circ n^{k-1}\rangle
\\
\stackrel{\eqref{E3}}{=}
&(2-D)\langle \tilde{x},\tilde{y}\circ n^{k-1}\rangle
+\langle \tilde{x},\tilde{y}\circ [E,n^{k-1}]\rangle
+\langle \tilde{x},\tilde{y}\circ n^{k-1}\rangle
\\
\stackrel{\eqref{E2}}{=}&
(1-D+k)\langle \tilde{x},\tilde{y}\circ n^{k-1}\rangle~.
\end{split}
\end{equation} This implies
\begin{equation}\nonumber
E(x,y)_k-
(\, [E,x]_k,y)_k-
( x,[E,y]_k)_k=(1-D+k)(x,y)_k~.
\end{equation}

Next we show the case $k=0$. Let $x=n\circ\tilde{x},~y\in \Gamma(I)$. 
Recall that $(x,y)_0=\langle \tilde{x},y\rangle$.

Eq. \eqref{eta-const} : since $\nabla_z\, x=\nabla_z (n\circ \tilde{x})=n\circ \nabla_z\, \tilde{x} $ by \eqref{condition-nabla-n},
we have
\begin{equation}\nonumber
(\nabla^{(0)}_z\, x, y)_0+(x,\nabla^{(0)}_z \,y)_0
=\langle \nabla_z \,\tilde{x},y \rangle
+\langle \tilde{x},\nabla_z\,y\rangle
=z\langle \tilde{x},y\rangle=
z(x,y)_0~.
\end{equation}

Eq. \eqref{c-symmetry} : since $\nabla_z\, x=n\circ \nabla_z\, \tilde{x} $,
\begin{equation}\nonumber
\begin{split}
(\nabla_w c_0)(x,y,z)
=&w(x,~y\circ z)_0-(\nabla^{(0)}_w\,x,~y\circ z)_0
-(x, ~z\circ \nabla^{(0)}_w\,y)_0-(x,~y\circ \nabla^{(0)}_w \,z)_0
\\
\stackrel{\eqref{condition-nabla-n}}{=}
&w\langle \tilde{x}, ~y\circ z \rangle
-\langle \nabla_w\, \tilde{x}, ~y\circ z\rangle
-\langle \tilde{x},~z\circ \nabla_w\,y\rangle 
-\langle \tilde{x},~y\circ \nabla_w \,z\rangle
\\
=&(\nabla_w \,c)(\tilde{x},y,z)
\stackrel{\eqref{c1}}{=}(\nabla_z \,c)(\tilde{x},y,w)
\\
=&(\nabla_z\, c_0)(x,y,z)~.
\end{split}
\end{equation}

Eq. \eqref{E-multiplication} follows immediately from \eqref{E1}.

Eq. \eqref{E-metric} : 
notice that
$[E,x]=n\circ ([E,\tilde{x}]+\tilde{x})$ holds by \eqref{E1}.
Therefore
\begin{equation}\nonumber
\begin{split}
E( x,y)_0-( [E,x],y)_0-
( x,\,[E,y])_0
=&E\langle \tilde{x},y\rangle-\langle [E,\tilde{x}]+\tilde{x} ,y\rangle
-\langle \tilde{x},\,[E,y]\rangle
\\
\stackrel{\eqref{E3}}{=}&(2-D)\langle \tilde{x},y\rangle
-\langle \tilde{x},y\rangle
\\=&(1-D)( x,y)_0~.
\end{split}
\end{equation}
\end{proof}

%%%%%%%%%%%%%%%%%%%%%%%%%%%%%%%%%%
\subsection{Quotient construction}
%%%%%%%%%%%%%%%%%%%%%%%%%%%%%%%%%%
Next we apply Lemma \ref{lem:quotient-construction2} to the 
mixed Frobenius structure obtained in Theorem \ref{thm:nilpotent2}
with $k=0$.

\begin{corollary}\label{quotient-construction2}
If $(\circ,\langle~,~\rangle,E)$ is a Frobenius structure on $M$
of charge $D$ with a nilpotent global vector field $n$
satisfying \eqref{condition-E-n} and \eqref{condition-nabla-n},
then
$(\nabla^{(1)},~E^{(1)},~\circ_1, ~I_{\bullet}/I_0,~(~,~)_{\bullet})$ is a
mixed Frobenius structure of reference charge $D+1$ on $M^{(1)}$.
\end{corollary}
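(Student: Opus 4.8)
The plan is to derive this corollary as a straightforward composition of two results already in hand, with no new compatibility conditions to verify. First I would feed the hypotheses into Theorem \ref{thm:nilpotent2}: since $(\circ,\langle~,~\rangle,E)$ is a Frobenius structure of charge $D$ on $M$ and $n$ is a nilpotent global vector field satisfying \eqref{condition-E-n} and \eqref{condition-nabla-n}, that theorem already produces a mixed Frobenius structure $(\nabla, E, \circ, I_\bullet, (~,~)_\bullet)$ on $M$ of reference charge $D+1$, where $I_\bullet$ is the filtration \eqref{nilp1} and the pairings are those of Definition \ref{nilp2}. This mixed Frobenius structure is the object to which the quotient construction will be applied.

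Next I would apply Lemma \ref{lem:quotient-construction2} to this structure with $k=0$. Because $I_0 = I = \mathrm{Im}(n\circ)$ is the bottom nonzero term of \eqref{nilp1}, the transversal slice produced by the lemma is exactly $M^{(1)}$, and the induced data read off from the lemma are precisely $(\nabla^{(1)}, E^{(1)}, \circ_1, I_\bullet/I_0, (~,~)_\bullet)$. Since Lemma \ref{lem:quotient-construction2} leaves the reference charge unchanged and its input here carries reference charge $D+1$, its output is a mixed Frobenius structure of reference charge $D+1$ on $M^{(1)}$, which is the assertion of the corollary.

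The only point I would take care over --- rather than a genuine obstacle --- is matching the notation for the transported pairings. I would check that for $j\geq 1$ the graded quotient $(I_j/I_0)/(I_{j-1}/I_0)$ is canonically identified with $I_j/I_{j-1}$, so that the forms $(~,~)_\bullet$ carried along by Lemma \ref{lem:quotient-construction2} agree, fiberwise, with the forms of the algebraic quotient construction \eqref{quot-bilinear}; this fiberwise identification is exactly Corollary \ref{prop-quotient}. Once this is noted, every compatibility condition \eqref{eta-const}, \eqref{c-symmetry}, \eqref{E-multiplication}, \eqref{E-metric} has already been discharged inside the proof of Lemma \ref{lem:quotient-construction2}, so nothing further remains to prove.
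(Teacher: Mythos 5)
Your proposal is correct and matches the paper's own argument exactly: the corollary is obtained by first invoking Theorem \ref{thm:nilpotent2} to get a mixed Frobenius structure of reference charge $D+1$ on $M$, and then applying Lemma \ref{lem:quotient-construction2} with $k=0$ to pass to the transversal slice $M^{(1)}$. Your remark identifying the induced pairings with those of Corollary \ref{prop-quotient} is a sensible bookkeeping check, but nothing beyond the two cited results is needed.
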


Examples of this construction can be found in  \S \ref{example-toricsurface} and \S \ref{example-localP3}.
%%%%%%%%%%%%%%%%%%%%%%%%%%%%%%%%%%%%%%%%%%%%%%%%%%%%%%%%%%%%%%%%%%%%%
\section{Local quantum cohomology of weak Fano toric surfaces}
\label{example-toricsurface}
%%%%%%%%%%%%%%%%%%%%%%%%%%%%%%%%%%%%%%%%%%%%%%%%%%%%%%%%%%%%%%%%%%%%%
Let $S$ be a weak Fano toric surface.
We define the local quantum cup product
on the cohomology $H^*(S,\mathbb{C})$
using genus zero 
local Gromov--Witten invariants
and  construct
a mixed Frobenius structure.

First we recall basic facts about the quantum cohomology 
in \S \ref{QH}.
Then we state the results in \S \ref{toric-results}.
In \S \ref{H-V} %, \S \ref{Proof-prop-toric2} 
and \S \ref{Proof-prop-toric3}
we explain that they can be obtained %by the quotient construction
%(\S \ref{quotient-construction}, \S \ref{quotient-construction2})
from the quantum cohomology of
 the projective compactification $V$ of
the canonical bundle $K_S$.

%%%%%%%%%%%%%%%%%%%%%%%%%%%%%%%%
\subsection{Quantum cohomology}
\label{QH}
%%%%%%%%%%%%%%%%%%%%%%%%%%%%%%%%

Let $V$ be a nonsingular projective variety.
$\overline{M}_{g,n}(V,\beta)$
denotes the moduli stack of genus $g$, $n$-pointed stable maps
to $V$ of degree $\beta\in H_2(V,\mathbb{Z})$.
Its virtual dimension is 
\begin{equation}\label{virt-dim}
(1-g)(\dim V-3)-\int_{\beta}c_1(K_V)+n~,
\end{equation}
where $K_V$ is the canonical bundle of $V$.
Let
$ev_i:\overline{M}_{g,n}(V,\beta)\to V$ be the evaluation map
at the $i$-th marked point.

Fix a basis of the even part $H^{\rm even}(V,\mathbb{Q})$ 
of the cohomology $H^*(V,\mathbb{Q})$:
\begin{equation}\label{basis-X}
\Gamma_0=1~,~\underbrace{ \Gamma_1,\ldots,\Gamma_r}_{H^2(V)}~,
~\underbrace{\Gamma_{r+1},\ldots,\Gamma_{s}}_{H^{\geq 4}(V)}~.
\end{equation}
The dual basis with respect to the intersection form 
is denoted $\{\Gamma_i^{\vee}\}$, i.e.
$$
\int_V \Gamma_i\cup \Gamma_j^{\vee} =\delta_{i,j}~.
$$
Let $t^0,\ldots,t^s$ be the coordinates of 
$H^{\rm even}(V,\mathbb{Q})$ associated to
\eqref{basis-X}.

\begin{definition}
The genus zero Gromov--Witten potential of $V$ is defined by
\begin{equation}\label{GW-potential}
\begin{split}
\Phi(t\,;q)&=
\sum_{n=0}^{\infty}\sum_{\beta\in H_2(V,\mathbb{Z})}
\frac{1}{n!}
\Big(\int_{[\overline{M}_{0,n}(V,\beta)]^{vir}}
\prod_{i=1}^n\, ev_i^* \mathbf{t}~\Big) q^{\beta},
\\
\mathbf{t}&=\sum_{i=0}^s t^i\Gamma_i~.
\end{split}
\end{equation}
Here $q$ is the parameter associated to $H_2(V,\mathbb{Z})$
and $[\overline{M}_{0,n}(V,\beta)]^{vir}$ is the virtual fundamental class.
\end{definition}
Recall that
the contribution $\Phi_{cl}$ from $\beta=0$ in $\Phi(t\,;q)$
is given by the triple intersection because of the point mapping axiom
(see \cite[\S 2]{KM}, also \cite[Chapter 8]{Cox-Katz}, for axioms of Gromov--Witten invariants):
\begin{equation}
\Phi_{cl}=\sum_{i,j,k=0}^s \frac{t^it^jt^k}{3!}\,\int_V
\Gamma_i\cup \Gamma_j\cup\Gamma_k~.
\label{GW-classical}
\end{equation}

\begin{definition}
The quantum cup product $\circ_t$ is defined by
\begin{equation}\label{quantum-cup}
\Gamma_i\circ_t\Gamma_j=\sum_{k=0}^s
\frac{\partial^3 \Phi}{\partial t^i\partial t^j\partial t^k}\,\Gamma_k^{\vee}~
\quad (0\leq i,j\leq s)~.
\end{equation}
We call $(H^{\rm even}(V),\circ_t)$ the quantum cohomology of $V$.\footnote{
Usually the quantum cohomology refers to a superalgebra structure
on $H^*(V)$ 
and the quantum cohomology considered  here is 
its subalgebra.
For our purpose, this is sufficient 
because $H^*(V)=H^{\rm even}(V)$ for 
nonsingular toric varieties $V$ 
which we deal with in \S \ref{example-toricsurface}, 
\S \ref{example-localP3}.
}
\end{definition}

The intersection form $\langle~,~\rangle$ is defined by
\begin{equation}\label{X-intersection}
\langle \Gamma_i,\Gamma_j\rangle=\int_V \Gamma_i\cup \Gamma_j~.
\end{equation}
Then $(H^{\rm even}(V),\circ_t,\langle~,~\rangle)$ is a Frobenius algebra.

Define the vector field $E$ on $M=H^*(V,\mathbb{C})$ by
\begin{equation}\label{QH-Euler}
E=\sum_{i=0}^s \frac{2-\mathrm{deg}\Gamma_i}{2}\, t^i
\frac{\partial}{\partial t^i} 
+\sum_{i=1}^r 
\xi_i \frac{\partial}{\partial t^i}~.
\end{equation}
Here the numbers $\xi_i$ are coefficients of $\Gamma_i$ in $-c_1(K_V)$:
$$
-c_1(K_V)=\sum_{i=1}^r \xi_i\Gamma_i~.
$$
\begin{theorem} \label{QH-Frob-str}
$(\circ,\langle~,~\rangle,E)$ defined in \eqref{quantum-cup},
\eqref{X-intersection} and \eqref{QH-Euler} is a Frobenius structure on 
$M=H^{\mathrm{even}}(V,\mathbb{C})$
of charge $\dim V$ 
(\cite{KM}, see also \cite{Manin}).
\end{theorem}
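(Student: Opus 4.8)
The plan is to work throughout in the flat coordinates $t^0,\dots,t^s$ attached to the basis \eqref{basis-X}, writing $\partial_i=\frac{\partial}{\partial t^i}$, and to translate each clause of Definition \ref{def:Frobenius-structure} into an identity for the genus zero potential $\Phi$ of \eqref{GW-potential}. Since the metric is the intersection form \eqref{X-intersection}, its matrix $\eta_{ij}=\langle\Gamma_i,\Gamma_j\rangle$ is \emph{constant} in these coordinates; hence its Levi--Civita connection is the trivial connection $\nabla_{\partial_i}\partial_j=0$, which is flat and torsion free, and the unit field $e=\partial_0=\Gamma_0$ is $\nabla$-flat. This settles the first bullet of Definition \ref{def:Frobenius-structure} and reduces everything else to statements about derivatives of $\Phi$.

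For the algebra axioms and the potentiality \eqref{c1}, I would first record that the triple product is totally symmetric: the dual-basis relation $\int_V\Gamma_i\cup\Gamma_j^{\vee}=\delta_{ij}$ together with \eqref{quantum-cup} gives
\begin{equation}\nonumber
\langle\Gamma_i,\Gamma_j\circ\Gamma_k\rangle=\partial_i\partial_j\partial_k\Phi,
\end{equation}
which is symmetric in $(i,j,k)$. Symmetry in $(j,k)$ is commutativity, exchanging the first slot is the Frobenius property \eqref{A1}, and since $\nabla$ is trivial one has $(\nabla_{\partial_m}c)(\partial_i,\partial_j,\partial_k)=\partial_i\partial_j\partial_k\partial_m\Phi$, symmetric in all four indices, which is exactly \eqref{c1}. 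Associativity of $\circ_t$ I would obtain from the WDVV equation $\sum_{e,f}(\partial_i\partial_j\partial_e\Phi)\,\eta^{ef}\,(\partial_f\partial_k\partial_l\Phi)=\sum_{e,f}(\partial_i\partial_l\partial_e\Phi)\,\eta^{ef}\,(\partial_f\partial_j\partial_k\Phi)$, a consequence of the splitting axiom on the boundary of $\overline{M}_{0,4}$, and the unit relation $\Gamma_0\circ_t x=x$ from the fundamental class (string) axiom, using that $\partial_0\partial_j\partial_k\Phi=\eta_{jk}$ reduces to the classical cubic term \eqref{GW-classical}. This produces the Frobenius algebra on each fibre.

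It remains to handle the Euler field. Writing $E=\sum_iE^i\partial_i$ as in \eqref{QH-Euler}, the coefficients $E^i$ are affine in $t$, so $\partial_j\partial_k E^i=0$ and $\nabla\nabla E=0$ is immediate. Setting $d_i:=\tfrac{2-\deg\Gamma_i}{2}$ one finds $[E,\partial_i]=-d_i\partial_i$; since $E\eta_{ij}=0$ and $\eta_{ij}=0$ unless $\deg\Gamma_i+\deg\Gamma_j=2\dim V$, we get
\begin{equation}\nonumber
E\langle\partial_i,\partial_j\rangle-\langle[E,\partial_i],\partial_j\rangle-\langle\partial_i,[E,\partial_j]\rangle=(d_i+d_j)\eta_{ij}=(2-\dim V)\eta_{ij},
\end{equation}
which is \eqref{E3} with $D=\dim V$. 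Finally, a short computation with $[E,\partial_i]=-d_i\partial_i$, expressing $\circ$ through the third derivatives of $\Phi$ and using once more that $\eta^{lk}$ vanishes unless $d_l+d_k=2-\dim V$, reduces \eqref{E1} to the single quasi-homogeneity statement
\begin{equation}\nonumber
E\,\partial_i\partial_j\partial_l\Phi=(3-\dim V-d_i-d_j-d_l)\,\partial_i\partial_j\partial_l\Phi.
\end{equation}

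This last identity is the crux, and I would prove it from the dimension and divisor axioms. Split $E=E_1+E_2$ with $E_1=\sum_i d_it^i\partial_i$ and $E_2=\sum_{i=1}^r\xi_i\partial_i$, and write $\Phi=\sum_\beta\Phi_\beta(t)\,q^\beta$. The dimension axiom (via \eqref{virt-dim}) forces every surviving $t$-monomial in $\Phi_\beta$ to have the same total degree, making $\Phi_\beta$ an $E_1$-eigenvector of eigenvalue $3-\dim V-\int_\beta c_1(TV)$. The divisor axiom, applied to the $H^2$-variables $t^1,\dots,t^r$, gives $\partial_{t^i}\Phi_\beta=(\int_\beta\Gamma_i)\Phi_\beta$ up to terms of degree $\le1$ in $t$, so that $E_2\Phi_\beta=\big(\sum_{i=1}^r\xi_i\int_\beta\Gamma_i\big)\Phi_\beta+(\deg\le1)=\int_\beta c_1(TV)\cdot\Phi_\beta+(\deg\le1)$, using $-c_1(K_V)=\sum_i\xi_i\Gamma_i$. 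Adding the two contributions, the $\beta$-dependent term $\int_\beta c_1(TV)$ cancels and $E\Phi=(3-\dim V)\Phi$ modulo a polynomial of degree $\le2$ in $t$ (the remainder absorbing the $\beta=0$ cubic and the finitely many unstable low-$n$ terms). Applying $\partial_i\partial_j\partial_l$, noting $\partial_i\partial_j\partial_l(E\Phi)=E\,\partial_i\partial_j\partial_l\Phi+(d_i+d_j+d_l)\,\partial_i\partial_j\partial_l\Phi$ and that the degree-$\le2$ remainder is killed, yields the displayed quasi-homogeneity and hence \eqref{E1}. The main obstacle is precisely this bookkeeping: matching the divisor-axiom factor $\int_\beta c_1(TV)$ against the translation term of $E$ built from $-c_1(K_V)$, and verifying that all exceptional contributions genuinely land in the degree-$\le2$ remainder so that they disappear after three differentiations.
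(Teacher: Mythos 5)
Your argument is correct and is precisely the standard proof that the paper delegates to \cite{KM} and \cite{Manin} without reproducing it: constant intersection form gives the flat trivial connection, total symmetry of $\partial_i\partial_j\partial_k\Phi$ gives commutativity, the Frobenius property and \eqref{c1}, WDVV gives associativity, the string equation gives the unit, and the dimension plus divisor axioms give the quasi-homogeneity $E\Phi=(3-\dim V)\Phi$ modulo quadratic terms, from which \eqref{E1} and \eqref{E3} follow with $D=\dim V$. No gaps; the bookkeeping you flag as the crux (cancellation of $\int_\beta c_1(TV)$ between the grading part and the translation part of $E$) works out exactly as you describe.
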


\begin{remark}\label{rem:conv}
When $V$ is a smooth projective toric variety, 
the convergence of the Gromov--Witten potential \eqref{GW-potential} 
was proved by Iritani \cite[Theorem 1.3]{Iritani}. We only deal
with such cases in \S \ref{example-toricsurface} and
\S \ref{example-localP3}.
\end{remark}
%%%%%%%%%%%%%%%%%%%%%
\subsection{Results}
\label{toric-results}
%%%%%%%%%%%%%%%%%%%%%
Let $S$ be a weak Fano toric surface.
Let $\gamma_{r+1}\in H^4(S,\mathbb{Z})$ be the Poincar\'e dual of 
the point class and $\gamma_0=1\in H^0(S)$.
Let $\gamma_1,\ldots,\gamma_r$ be a basis of $H^2(S,\mathbb{Z})$.
Define the integers $c_{ij},b_i,b_{i}^{\vee},\kappa$ ($1\leq i,j\leq r$) by
\begin{equation}\begin{split}\nonumber
&c_{ij}=\int_S \gamma_i\cup\gamma_j~,\quad
-c_1(K_S)=\sum_{i=1}^r b_i\gamma_i~,\\
&b_i^{\vee}=\sum_{j=1}^rb_jc_{ji}~,\quad
\kappa=\int_S c_1(K_S)^2=\sum_{i,j=1}^r b_ib_jc_{ij}=\sum_{i=1}^r b_ib_i^{\vee}~,
\end{split}
\end{equation}
where $c_1(K_S)$ is the first Chern class of the canonical bundle $K_S$.
Notice that the matrix $(c_{ij})$ is invertible. 
Notice also that $\kappa> 0$ holds for weak Fano toric surfaces.

Consider the filtration\footnote{When $S$ is Fano, \eqref{toric-filtration} and \eqref{toric-bilinear} 
agree with those in Example \ref{example-surface}
with $n=K_S$ if the filtration is shifted by one.}
on $H^*(S,\mathbb{C})$ by subspaces
\begin{equation}\label{toric-filtration}
0= I_{0}\subset I_1=\mathbb{C}\,c_1(K_S)\oplus H^4(S)\subset I_2
=H^2(S)\oplus H^4(S)=I_3
\subset I_4=H^*(S)~,
\end{equation}
and the following bilinear forms on $I_k/I_{k-1}$:
\begin{equation}\label{toric-bilinear}
\begin{split}
&( c_1(K_S),\gamma_{r+1})_1=1~,\quad
( c_1(K_S),c_1(K_S))_1=( \gamma_{r+1},\gamma_{r+1})_1=0~,\\
&( \gamma_i,\gamma_j)_2=c_{ij}-\frac{b_i^{\vee}b_j^{\vee}}{\kappa}
\quad (1\leq i,j\leq r)~,\\
&(\gamma_0,\gamma_0)_4=\kappa~.
\end{split}
\end{equation}

%\begin{example} \label{Fano-toric-examples}
%(i) For $S=\mathbb{P}^2$, $r=1$. 
%If we take $\gamma_1=c_1(\mathcal{O}_{\mathbb{P}^2}(1))$, then
%$c_{11}=1$, $b_1=3$, $b_1^{\vee}=3$, $\kappa=9$ and $\gamma_2=\gamma_1\cup \gamma_1$.
%The filtration \eqref{toric-filtration}
%is 
%$$
%0\subset I_1=\mathbb{C}\gamma_1\oplus \mathbb{C}\gamma_2=I_2=I_3\subset I_4=H^*(\mathbb{P}^2)~.
%$$
%The bilinear forms  \eqref{toric-bilinear}
%are given by
%$$
%(\gamma_1,\gamma_2)_1=-\frac{1}{3}~,\quad (\gamma_0,\gamma_0)_4=9.
%$$
%These agree with Example \ref{example-Pn} with $m=-3$
%if the filtration is shifted by one.
%\\
%(ii) For $S=\mathbb{P}^1\times \mathbb{P}^1$, $r=2$. 
%If we take 
%$\gamma_1=c_1(\mathcal{O}_{\mathbb{P}^1}(1)\boxtimes \mathcal{O}_{\mathbb{P}^1})$
%and 
%$\gamma_2=c_1(\mathcal{O}_{\mathbb{P}^1}\boxtimes \mathcal{O}_{\mathbb{P}^1}(1))$, then
%$c_{12}=1,c_{11}=c_{22}=0$, $b_1=b_2=2$, $b_1^{\vee}=b_2^{\vee}=2$, $\kappa=8$
% and $\gamma_3=\gamma_1\cup \gamma_2$,
%$\gamma_1\cup \gamma_1=\gamma_2\cup\gamma_2=0$.
%The filtration \eqref{toric-filtration} 
%is  given by
%$$
%I_1=\mathbb{C}(\gamma_1+\gamma_2)\oplus \mathbb{C}\gamma_3~,\quad
%I_2=\mathbb{C}\gamma_1\oplus \mathbb{C}\gamma_2\oplus 
%\mathbb{C}\gamma_3~.
%$$
%The bilinear forms  \eqref{toric-bilinear} are given by
%$$
%(2\gamma_1+2\gamma_2,\gamma_3)_1=-1~,\quad
%(\gamma_1,\gamma_1)_2=-\frac{1}{2}=(\gamma_2,\gamma_2)_2~,\quad
%(\gamma_1,\gamma_2)_2=\frac{1}{2}~,\quad
%(\gamma_0,\gamma_0)_4=8~.
%$$
%These agree with Example \ref{example-surface}
%with $n=-2\gamma_1-2\gamma_2$
%if the filtration is shifted by one.
%\end{example}

Next we define the local quantum cup product.
Let $\overline{M}_{g,n}(S,\beta)$ 
be the moduli stack of $n$-pointed genus $g$ stable maps to $S$
of degree $\beta\in H_2(S,\mathbb{Z})$,
$ev_i:\overline{M}_{g,n}(S,\beta)\to S$ be the evaluation map
at the $i$-th marked point,
$\mu:\overline{M}_{g,1}(S,\beta)\to  \overline{M}_{g,0}(S,\beta)$
be the forgetful map.
\begin{definition}
For an effective class $\beta\in H_2(S,\mathbb{Z})$ satisfying
$-\int_\beta c_1(K_S)> 0$, 
we define
\begin{equation}\label{local-GW}
N_{\beta}=\int_{[\overline{M}_{0,0}(S,\beta)]^{vir}} e(R^1\mu_*ev_1^*K_S)~.
\end{equation}
Here $e$ stands for the Euler class.
For other $\beta\in H_2(S,\mathbb{Z})$, we just set $N_{\beta}=0$.
We call $N_{\beta}$ the genus zero local Gromov--Witten invariants
of degree $\beta$ of $S$. 
\end{definition}
These numbers can be found in \cite{CKYZ} for some $S$.
%\begin{remark}
%For $S=\mathbb{P}^2$, 
%let $C_1\in H_2(\mathbb{P}^2, \mathbb{Z})$ be the dual of $\gamma_1=c_1(\mathcal{O}_{\mathbb{P}^2}(1))$.
%Then $N_d:=N_{d C_1}=0$ if $d<0$. For $d\geq 1$,
%the numbers $N_{d}$ can be found
%in \cite[Table 1]{CKYZ}. The numbers 
%$n_d$ there is related to $N_{d}$ here by
%$$
%N_{d}=\sum_{k|d}\frac{n_{d/k}}{k^3}~.
%$$
%Here the sum is over all positive integers $k$ which are  divisors of $d$.
%For example, $N_1=n_1=3$, $N_2=n_2+\frac{n_1}{2^3}=
%-\frac{45}{8}$, 
%$N_3=n_3+\frac{n_1}{3^3}=\frac{244}{9}$,
%$N_4=n_4+\frac{n_2}{2^3}+\frac{n_1}{4^3}=-\frac{12333}{64}$.
%
%For $S=\mathbb{P}^1\times \mathbb{P}^1$,
%let $\gamma_1,\gamma_2$ be as in Example \ref{Fano-toric-examples}.
%Then $N_{d_1,d_2}:=N_{d_1C_1+d_2C_2}=0$ if $d_1<0$ or $d_2<0$ 
%or $d_1=d_2=0$.
%For other $(d_1,d_2)$, the numbers $N_{d_1,d_2}$ can be found in 
%\cite[Table 9]{CKYZ}. The numbers $n_{d_1,d_2}$ there and $N_{d_1,d_2}$ here 
%are related by
%$$
%N_{d_1,d_2}=\sum_{k|d_1,d_2}\frac{n_{d_1/k,d_2/k}}{k^3}~,
%$$
%where the sum is over all positive integers $k$ which are  common divisors of 
%$d_1,d_2$.
%\end{remark}

Let $\{C_1,\ldots,C_r\}$ be the basis of $H_2(S,\mathbb{Z})$
dual to $\gamma_1,\ldots,\gamma_r$.
\begin{definition}
The local quantum cup product on $H^*(S,\mathbb{C})$ is 
the following family of multiplications $\circ_t$
parameterized by $t=(t^1,\ldots,t^r)$
:
\begin{equation}\label{toric-multiplication}
\begin{split}
&\gamma_0\circ_t \gamma_i=\gamma_i~,\quad
\gamma_0\circ_t \gamma_{r+1}=\gamma_{r+1}~,\quad
\gamma_i\circ_t \gamma_{r+1}=\gamma_{r+1}\circ_t \gamma_{r+1}=0~,
\\
&\gamma_i\circ_t \gamma_j=\Big(c_{ij}-
\sum_{\begin{subarray}{c}
\beta=\sum_i \beta_iC_i\\
\end{subarray}}
\beta_i\,\beta_j (b\cdot \beta)\,N_{\beta}~
e^{\beta\cdot t}\Big)\, \gamma_{r+1}~
\quad (1\leq i,j\leq r)~,
\end{split}
\end{equation}
where $\beta\cdot t=\sum_{i=1}^r \beta_i t^i$
and $b\cdot \beta= \sum_{i=1}^r b_i \beta_i$.
\end{definition}
%Notice that 
%$I_k$ in the filtration \eqref{toric-filtration}
%are ideals with respect to this multiplication $\circ_t~$.

%\begin{proposition}\label{toric2}
%Regard $H^*(S,\mathbb{C})$ as an algebra by
%the local quantum cup product \eqref{toric-multiplication}. 
%Then  the sequence \eqref{toric-filtration}
%and the bilinear forms \eqref{toric-bilinear} form a 
%Frobenius filtration on $H^*(S,\mathbb{C})$.
%\end{proposition}
%
%
%
%Let $M=H^*(S,\mathbb{C})$ and let $t^0,t^1,\ldots,t^r,t^{r+1}$
%be the coordinates associated to the basis 
%$\gamma_0,\gamma_1,\ldots,\gamma_r,\gamma_{r+1}$.
%We identify each tangent space $T_tM$ with $H^*(S,\mathbb{C})$
%by $\frac{\partial}{\partial t^i}\mapsto \gamma_i$. 

%Let $\nabla$ be the trivial connection on $TM$ 
%such that $\frac{\partial}{\partial t^i}$ are flat sections.
\begin{remark}
Although we defined the local quantum cup product by explicit formulas,
it is possible to define it in a uniform manner. One way is to use equivariant 
localization with respect to the $\mathbb{C}^*$-action rotating the fiber of the
canonical bundle. See e.g. \cite{KonishiMinabe14}. Another way is to use the fact 
that the evaluation map is proper in all the example in this paper. See 
\cite[\S1.4]{BryanGraber}. The formulas given in \eqref{toric-multiplication} 
coincide with these general definitions. The authors thank the referee for pointing out this fact. 
\end{remark}

Let $M=H^*(S,\mathbb{C})$ and $t^0,t^1,\ldots,t^{r+1}$
be the coordinates associated to the basis
$\gamma_0,\gamma_1,\ldots,\gamma_{r+1}$.
Define a vector field $E$ on $M$ by
$$
E=t^0\frac{\partial}{\partial t^0}-t^{r+1}\frac{\partial}{\partial t^{r+1}}~.
$$
Regard the multiplication \eqref{toric-multiplication},
the filtration \eqref{toric-filtration}
and the bilinear forms \eqref{toric-bilinear}
on those on the tangent space $T_t M$ by
identifying $T_tM$ with $H^*(S,\mathbb{C})$.

\begin{theorem}\label{toric3}
The trivial connection, the above $E$,
the multiplication \eqref{toric-multiplication}
and the Frobenius filtration \eqref{toric-filtration}
and \eqref{toric-bilinear}
form a mixed Frobenius structure on $M=H^*(S,\mathbb{C})$
of reference charge four. 
\end{theorem}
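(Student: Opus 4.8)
The plan is to realize this mixed Frobenius structure as the output of the quotient construction of Corollary~\ref{quotient-construction2}, applied to the quantum cohomology of the projective compactification $V=\mathbb{P}(K_S\oplus\mathcal{O}_S)$ of the canonical bundle. Since $S$ is a smooth toric surface, $V$ is a smooth projective toric threefold, so by Theorem~\ref{QH-Frob-str} (with convergence of the Gromov--Witten potential guaranteed by \cite{Iritani}) the quantum cohomology $(H^{\mathrm{even}}(V),\circ,\langle~,~\rangle,E_V)$ is a Frobenius structure of charge $\dim V=3$. The target reference charge four then matches the shift $D\mapsto D+1$ produced by the construction.

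First I would identify a nilpotent global vector field on this Frobenius manifold. Let $\pi:V\to S$ be the projection and let $\xi=c_1(\mathcal{O}_{\mathbb{P}}(1))$ be the relative hyperplane class, so that the relation $\xi^2+\pi^*c_1(K_S)\cup\xi=0$ holds in $H^*(V)$. Since $c_1(K_S)^3=0$ in $H^*(S)$, the cup-powers of $\xi$ satisfy $\xi^4=0$ while $\xi^3\neq0$, and quotienting by the ideal $(\xi)$ recovers $H^*(V)/(\xi)\cong H^*(S)$. Setting $n:=\xi$, I would then verify the two hypotheses of the construction. Condition~\eqref{condition-E-n}, namely $[E_V,n]=0$, follows immediately because $n$ is a constant vector field of cohomological degree two, on which the diagonal part of the Euler field \eqref{QH-Euler} acts with weight $\tfrac{2-\deg\Gamma}{2}=0$. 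Condition~\eqref{condition-nabla-n}, that multiplication by $n$ be flat, is the delicate one: it amounts to showing that the quantum product $n\circ_t(-)$ carries no $t$-dependent corrections. This reflects the geometry of the $\mathbb{P}^1$-bundle $V$, where the surviving quantum corrections are concentrated in the $H^2(S)\otimes H^2(S)$ directions and leave multiplication by the fiber class classical; once this is verified the $\circ$-powers of $n$ coincide with its cup-powers, so $n$ is nilpotent of order $d=4$ as required.

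With these conditions in hand, Corollary~\ref{quotient-construction2} produces, on the transversal slice $M^{(1)}$, a mixed Frobenius structure of reference charge $3+1=4$, carrying the induced flat connection, Euler field $E^{(1)}$, quotient multiplication $\circ_1$, and Frobenius filtration. It then remains to make the identifications explicit, which is the purpose of \S\ref{H-V} and \S\ref{Proof-prop-toric3}: one identifies $M^{(1)}\cong H^*(S,\mathbb{C})$ so that $E^{(1)}$ becomes the stated Euler field $t^0\partial_{t^0}-t^{r+1}\partial_{t^{r+1}}$, checks that the induced filtration and pairings coincide with \eqref{toric-filtration} and \eqref{toric-bilinear} (the pairings being read off from the intersection form on $V$ through Definition~\ref{nilp2}), and finally shows that $\circ_1$ equals the local quantum cup product \eqref{toric-multiplication}. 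This last identification is where the local Gromov--Witten invariants $N_\beta$ of \eqref{local-GW} enter: the relevant three-point functions of $V$, for curve classes mapping into a section $\cong S$, reduce via the obstruction bundle $R^1\mu_*ev_1^*K_S$ to the numbers $N_\beta$, with the factor $\beta_i\beta_j\,(b\cdot\beta)$ arising from two applications of the divisor axiom together with $b\cdot\beta=-\int_\beta c_1(K_S)$.

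I expect the main obstacle to be this final comparison between the quantum cohomology of the compactification $V$ and the local invariants of $S$, together with the verification of flatness \eqref{condition-nabla-n}. Concretely, one must prove that the only classes contributing to the quotient structure constants are those projecting to effective $\beta\in H_2(S,\mathbb{Z})$, and that their contribution is exactly $\beta_i\beta_j\,(b\cdot\beta)\,N_\beta\,e^{\beta\cdot t}$; this is precisely the A-model content of local mirror symmetry for weak Fano toric surfaces in the sense of \cite{CKYZ}, and forms the technical heart of the argument. The remaining verifications---nilpotency, $[E_V,n]=0$, and the identification of the filtration and pairings---are comparatively routine once this Gromov--Witten-theoretic input is secured.
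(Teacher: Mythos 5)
Your strategy is the paper's: realize the structure via Corollary~\ref{quotient-construction2} applied to the compactification $V=\mathbb{P}(K_S\oplus\mathcal{O}_S)$, with the quotient multiplication matched to \eqref{toric-multiplication} through the localization identity $N^V_\beta=N_\beta$. However, there is a genuine gap in the step you yourself flag as delicate, the flatness condition \eqref{condition-nabla-n}. For the \emph{actual} quantum cohomology of $V$ (the object produced by Theorem~\ref{QH-Frob-str}), multiplication by a divisor class is \emph{not} classical: the three-point functions $\langle n,\Gamma,\Gamma'\rangle_\beta$ receive contributions from curve classes $\beta=\beta_0C_0'+\sum\beta_iC_i'$ with fiber degree $\beta_0>0$, and these depend on the coordinate $s^0$ through $e^{\beta_0 s^0}q_0^{\beta_0}$. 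So the endomorphism $n\circ_t$ is not covariantly constant on $H^*(V,\mathbb{C})$, and the hypotheses of Theorem~\ref{thm:nilpotent2}/Corollary~\ref{quotient-construction2} fail for the honest quantum product. The missing ingredient is the specialization $q_0=0$ (Lemma~\ref{GW2}), which discards all classes with $\beta_0\neq 0$, together with the verification that the truncated product \eqref{quantum-cup-V} --- which is no longer the quantum cohomology of $V$ but its leading term in $e^{s^0}q_0$ --- still satisfies all the axioms of a Frobenius structure with the same Euler field (Lemma~\ref{FS-V}, where associativity, \eqref{c1} and \eqref{E1} are inherited from the full product because the truncation is the degree-zero part in $e^{s^0}q_0$). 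Only after this specialization does $\Delta_i\circ_t\ast=\Delta_i\cup\ast$ hold, making $n\circ$ flat. Attributing the flatness to ``the geometry of the $\mathbb{P}^1$-bundle'' without this limit is not correct as stated.

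A secondary but substantive point: the choice of nilpotent class matters. The two natural divisor classes $\Delta_0$ (Poincar\'e dual of the section at infinity) and $\Delta_0+\pi^*c_1(K_S)$ (the zero section, where $S$ sits) satisfy the same quadratic relation and both have order four under the cup product, but even after setting $q_0=0$ only multiplication by $\Delta_0$ is classical: the zero-section class differs from $\Delta_0$ by $\sum_i b_i\Gamma_i$, and $\Gamma_i\circ_t\Gamma_j$ carries exactly the $t$-dependent local Gromov--Witten corrections. Your $n=\xi=c_1(\mathcal{O}_{\mathbb{P}}(1))$ is, depending on convention, either of these two classes, so you must pin it down as the infinity section (equivalently, the constant vector field $\partial/\partial s^0$ in the paper's coordinates) for conditions \eqref{condition-E-n} and \eqref{condition-nabla-n} to hold. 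With these two corrections made, the rest of your outline --- computing $J_k=\mathrm{Ker}(\Delta_0^k\circ)$, reading off \eqref{toric-filtration} and \eqref{toric-bilinear} from Definition~\ref{nilp2} and the intersection form \eqref{intersection-V}, and identifying $\circ_1$ with \eqref{toric-multiplication} via $\Gamma_k^\vee\equiv -b_k\Gamma_{r+1}\bmod I$ --- matches \S\ref{H-V} and \S\ref{Proof-prop-toric3}.
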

%The proofs
%of Proposition \ref{toric2} and Theorem \ref{toric3} 
%will be given in 
%\S \ref{H-V}, \S \ref{Proof-prop-toric2}, 
%\S\ref{Proof-prop-toric3}
%by applying the quotient construction
%to the quantum cohomology ring of the projective compactification $V$
%of $K_S$.

The proof of
Theorem \ref{toric3} 
will be given in  \S \ref{Proof-prop-toric3}, 
by applying the quotient construction (Corollary \ref{quotient-construction2})
to the quantum cohomology ring of the projective compactification
of $K_S$.

\begin{remark}\label{rem:localB}
In this example, the operator 
$\mathcal{V}=\nabla E-\frac{2-D}{2}=-\frac{\text{deg}}{2}+2$
is diagonalizable and eigenvalues are integers.
Therefore other than the Frobenius filtration $I_{\bullet}$,
we can consider the decreasing filtration $\mathcal{F}^{\bullet}$
on $H^*(S,\mathbb{C})$
defined by
\begin{equation}\nonumber
\mathcal{F}^{p}=\bigoplus_{p'\geq p} 
\text{ the eigenspace of $\mathcal{V}$ with eigenvalue $p'$}~.
\end{equation}
This $\mathcal{F}^{\bullet}$ is the same as the one in \S \ref{sec:MHS}.
Moreover $I_k~(\text{here})=\mathcal{W}_k\otimes \mathbb{C}$
with $n=c_1(K_S)$.
They agree with the Hodge filtration and the weight filtration
of the mixed Hodge structure 
for the corresponding local B-model
under an appropriate vector space isomorphism
(see \cite[Theorem 4.2]{KonishiMinabe08} and  references therein).
\end{remark}

%\begin{remark}
%It is not difficult to see that the argument in \S \ref{Proof-prop-toric3} shows that the same 
%result as in Theorem \ref{toric3} holds for any surface $S$ with nef 
%anticanonical bundle. So we have a formal MFS on the even part of
%$H^*(S, \mathbb{C})$. If $S$ is toric, we obtain a genuine 
%MFS since the quantum cup product is known to be convergent (cf. Remark \ref{rem:conv}).  
%\end{remark}
%%%%%%%%%%%%%%%%%%%%%%%%%%%%%%%%%%%%%%%%%%%%%%%%%%%%%%%%%%%
\subsection{Quantum cohomology of 
$V=\mathbb{P}(\mathcal{O}_S\oplus K_S)$}\label{H-V}
%%%%%%%%%%%%%%%%%%%%%%%%%%%%%%%%%%%%%%%%%%%%%%%%%%%%%%%%%%%
Let $V=\mathbb{P}(\mathcal{O}_S\oplus K_S)$ be the projective
compactification of the canonical bundle $K_S$ of 
a weak Fano toric surface $S$ and 
$\mathrm{pr}:V\to S$ be the projection.
Let $\Gamma_i=\mathrm{pr}^*\gamma_i$ ($0\leq i\leq r+1$)
and let $\Delta_0$ be the Poincar\'e dual of 
the infinity section.
Set $\Delta_i=\Gamma_i\cup \Delta_0$ ($1\leq i\leq r$).
We have the following basis of $H^*(V,\mathbb{C})$:
\begin{equation}\label{V-basis}
\underbrace{\Gamma_0}_{H^0(V)}\quad
\underbrace{\Gamma_1,~\ldots,\Gamma_r,~\Delta_0}_{H^2(V)}\quad
\underbrace{\Gamma_{r+1},~\Delta_1,\ldots,~\Delta_r}_{H^4(V)}\quad 
\underbrace{\Delta_{r+1}}_{H^6(V)}~.
\end{equation}
Let $t_i,s_i$ ($0\leq i\leq r+1$) be the coordinates
of $H^{*}(V,\mathbb{C})$ associated to this basis.

The cup product $\cup$ can be  calculated by the intersection theory of 
toric varieties. The unit is $\Gamma_0$,  
$\Gamma_i\cup \Delta_0=\Delta_i$ by definition ($1\leq i\leq r+1$) 
and
\begin{equation}\label{cup-V}
\begin{split}
&\Gamma_i\cup\Gamma_j=c_{ij}\Gamma_{r+1}~,\quad
\Gamma_i\cup\Gamma_{r+1}=0~,\quad
\Gamma_i\cup\Delta_j=c_{ij}\,\Delta_{r+1},
\\
&\Delta_0^2=\sum_{i=1}^r b_i \Delta_i~,\quad
\Delta_0\cup \Delta_j=b_j^{\vee}\,\Delta_{r+1}~
\quad (1\leq i,j\leq r).
\end{split}
\end{equation}
Other products vanish by the degree reason.
The intersection form $\langle~,~\rangle$ can be 
obtained from $\int_V \Delta_{r+1}=1$ and the above cup product.
Explicitly pairings which do not vanish are:
\begin{equation}\label{intersection-V}
\begin{split}
&\langle \Gamma_0,\Delta_{r+1}\rangle=1~, \quad 
 \langle \Delta_0,\Gamma_{r+1}\rangle=1~,\\
&\langle \Gamma_i,\Delta_j\rangle =c_{ij}~,\quad
 \langle \Delta_0,\Delta_j\rangle =b_j^{\vee}~\quad (1\leq i,j\leq r)~.
\end{split}
\end{equation}

Let $A=(a_{ij})$ be the inverse of the matrix $(c_{ij})_{1\leq i,j\leq r}$.
The dual basis
of \eqref{V-basis} is given by the following.
\begin{equation}
\begin{split}
&\Gamma_0^{\vee}=\Delta_{r+1}~,\quad
\Gamma_{i}^{\vee}=\sum_{j=1}^ra_{ij}\Delta_j-b_i \Gamma_{r+1}~,
\quad
\Delta_0^{\vee}=\Gamma_{r+1}~,
\\
&\Gamma_{r+1}^{\vee}=\Delta_0-\sum_{k=1}^r b_k\Gamma_k~,
\quad
\Delta_i^{\vee}=\sum_{j=1}^r a_{ij}\Gamma_j~,
\quad
\Delta_{r+1}^{\vee}=\Gamma_0~.
\end{split}
\end{equation}

Now consider the Gromov--Witten potential $\Phi(t,s\,;q)$ 
(see \eqref{GW-potential})
and the quantum cup product \eqref{quantum-cup} of $V$.
To be concrete, let us  fix a basis of $H_2(V,\mathbb{Z})$
as follows.
Let
$\iota:S\rightarrow V$ be the inclusion as the zero 
section of $K_S\subset V$ and 
$C_i'=\iota_*C_i$ ($1\leq i\leq r$).
Let $C_{0}'$ be the  fiber class of $\mathrm{pr}:V\to S$.
Then $\{C_0',C_1',\ldots,C_r'\}$ is a base of $H_2(V,\mathbb{Z})$
which is dual to the basis $\{\Delta_0,\Gamma_1,\ldots,\Gamma_{r}\}$ 
of $H^2(V,\mathbb{Z})$.
Let $q_i$ ($0\leq i\leq r$) be  parameters
associated to $C_0',C_1',\ldots,C_{r}'$.
The parameter $q^{\beta}$ in \eqref{GW-potential} is written as
$$
q^{\beta}=\prod_{i=0}^r q_i^{\beta_i} \quad\text{ for }\quad
\beta=\sum_{i=0}^{r}\beta_iC_{i}'~.$$
\begin{lemma}\label{GW2}
We have
\begin{equation}
\begin{split}
\Phi(t,s\,;q)&=\Phi_{cl}+\Phi_0+\Phi_1~,\\
\Phi_{cl}
&=\frac{1}{2}\,t_0^2 s_{r+1}
+t_0\Big(\sum_{i,j=1}^r c_{ij}t_i s_j
+t_{r+1} s_0+\sum_{i=1}^r b_i^{\vee}s_0 s_i
\Big)
\\
&+\frac{1}{2}\sum_{i,j=1}^r c_{ij}t_it_js_0
+\frac{1}{2}\sum_{i=1}^{r}b_i^{\vee}t_i\, s_0^2~,
\\
\Phi_0&=
\sum_{\begin{subarray}{c}\beta=\beta_1C_1'+\cdots+\beta_r C_r'\neq 0
\end{subarray}}
N_{\beta}^V\,e^{\beta\cdot t} q^{\beta}~,\qquad 
\mathrm{where}\quad N_{\beta}^V=\int_{[\overline{M}_{0,0}(V,\beta)]^{vir}}\,1,
\\
\Phi_1&= \mathcal{O}(q_{0})~.
\end{split}
\end{equation}
\end{lemma}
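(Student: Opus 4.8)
The plan is to evaluate the potential \eqref{GW-potential} of $V$ by separating the classical contribution ($\beta=0$) from the quantum contributions ($\beta\neq 0$), and then to organize the quantum part according to the coefficient $\beta_0$ of the fiber class $C_0'$ in a curve class $\beta=\sum_{i=0}^r\beta_i C_i'$. The three summands $\Phi_{cl},\Phi_0,\Phi_1$ will correspond exactly to $\beta=0$, to $\beta\neq 0$ with $\beta_0=0$ (classes supported on the zero section), and to $\beta_0\geq 1$. The last family requires no computation: every $q^\beta$ with $\beta_0\geq 1$ carries a positive power of $q_0$, so the collected remainder is manifestly $\mathcal{O}(q_0)$, which is the definition of $\Phi_1$.

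For $\Phi_{cl}$ I would use the point-mapping formula \eqref{GW-classical}, reducing the task to the triple intersection numbers $\int_V\Gamma_a\cup\Gamma_b\cup\Gamma_c$ over the basis \eqref{V-basis}. Only triples of total degree six survive, and these are read off directly from the cup products \eqref{cup-V} and the pairings \eqref{intersection-V}: triples containing the unit $\Gamma_0$ reproduce the intersection form and yield the terms proportional to $t_0$, while the triples of three degree-two classes (of the shapes $\Gamma_i\cup\Gamma_j\cup\Delta_0$ and $\Gamma_i\cup\Delta_0^2$) yield the remaining $s_0$-terms. Assembling these with the factor $1/3!$ and the number of orderings of each monomial gives $\Phi_{cl}$.

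The core of the argument is the $\beta_0=0$ part. The decisive geometric input is that the zero section $\iota:S\hookrightarrow V$ has normal bundle $K_S$, so that $\iota^*c_1(TV)=c_1(TS)+c_1(K_S)=0$; hence $\int_\beta c_1(TV)=0$ for every $\beta$ supported on the zero section, and by \eqref{virt-dim} the virtual dimension of $\overline{M}_{0,n}(V,\beta)$ equals $n$. I would then treat the insertions by type. The fundamental-class axiom kills every insertion of $\Gamma_0$ (eliminating all $t_0$-dependence for $\beta\neq 0$). The divisor axiom exponentiates the degree-two insertions, producing a factor $\exp\bigl(\int_\beta(\sum_{i=1}^r t_i\Gamma_i+s_0\Delta_0)\bigr)$; since $\{C_j'\}$ is dual to $\{\Delta_0,\Gamma_1,\dots,\Gamma_r\}$ one has $\int_\beta\Gamma_i=\beta_i$ and $\int_\beta\Delta_0=\beta_0=0$, so this factor collapses to $e^{\beta\cdot t}$ and the variable $s_0$ drops out. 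Finally, a dimension count disposes of the rest: after removing the divisor and unit insertions one is left with $\langle\tau^m\rangle_{0,\beta}$ on a moduli space of virtual dimension $m$, where each remaining class $\tau$ has complex codimension at least two, so the total codimension is at least $2m$ and nonvanishing forces $m=0$. What survives is precisely the unmarked invariant $N_\beta^V=\int_{[\overline{M}_{0,0}(V,\beta)]^{vir}}1$, giving $\Phi_0=\sum_{\beta=\sum_{i\geq 1}\beta_i C_i'\neq 0}N_\beta^V\,e^{\beta\cdot t}\,q^\beta$.

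The main obstacle is exactly this vanishing-and-exponentiation step: one must verify the Calabi--Yau condition $\iota^*c_1(TV)=0$ along the zero section, confirm that it pins the virtual dimension to $n$, and then combine the fundamental-class axiom, the divisor axiom (with careful bookkeeping of the $1/n!$ factors so that the divisor variables genuinely exponentiate), and the codimension count so that all non-divisor insertions are eliminated. Once this is established, $\Phi_{cl}$ is a finite triple-intersection computation and $\Phi_1$ is merely the unexamined remainder in $\mathcal{O}(q_0)$, completing the decomposition $\Phi=\Phi_{cl}+\Phi_0+\Phi_1$.
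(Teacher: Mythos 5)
Your proposal is correct and follows essentially the same route as the paper: decompose by the fiber-class coefficient $\beta_0$, compute $\Phi_{cl}$ from the triple intersections via \eqref{GW-classical}, observe that $\beta_0\neq 0$ contributions are $\mathcal{O}(q_0)$ (non-effective classes giving empty moduli), and for $\beta_0=0$ use the vanishing of $\int_\beta c_1(K_V)$ to pin the virtual dimension at $n$ and then the fundamental-class, divisor, and dimension arguments to reduce to $N_\beta^V$. The only cosmetic difference is that you obtain $\int_\beta c_1(K_V)=0$ from the normal bundle of the zero section, whereas the paper reads it off from $-c_1(K_V)=2\Delta_0$ and $\int_\beta\Delta_0=\beta_0=0$; these are equivalent.
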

\begin{proof}
We decompose the Gromov--Witten potential $\Phi$ into 
three parts: $\Phi_{cl}$ which is the contribution of $\beta=0$,
$\Phi_0$ which is the contribution of homology classes
$\beta\neq 0$ with $\beta_0=0$, and the remaining part $\Phi_{1}$
which is the  contribution of $\beta$ with $\beta_0\neq 0$.

$\Phi_{cl}$ can be computed by \eqref{GW-classical}.

In $\Phi_1$, the terms with $\beta_0<0$ vanish because
such $\beta$ are not effective and the moduli stack
$\overline{M}_{0,n}(V,\beta)$ is empty.
Therefore $\Phi_1=\mathcal{O}(q_0)$.

Notice that since $-c_1(K_V)=2\Delta_0$, 
the virtual dimension \eqref{virt-dim} of $\overline{M}_{0,n}(V,\beta)
$ is $2\beta_0+n$. 
If $\beta_0=0$,
\begin{equation}\nonumber
\begin{split}
\int_{[\overline{M}_{0,n}(V,\beta)]^{vir}}
\prod_{i=1}^n \,ev^*_i\mathbf{t}
&=\int_{[\overline{M}_{0,n}(V,\beta)]^{vir}}
\prod_{i=1}^n \,ev^*_i\mathbf{t'}~
\quad\Big(\mathbf{t'}=\sum_{i=1}^r t_i\Gamma_i +s_0\Delta_0\Big)
\\
&=\Big(\sum_{i=1}^r \beta_it^i\Big)^n
\int_{[\overline{M}_{0,0}(V,\beta)]^{vir}}\,1~.
\end{split}
\end{equation}
Here
the first equality follows from the degree consideration and 
the fundamental class axiom,
the second equality follows from the divisor axiom.
This proves the equation for $\Phi_0$.
\end{proof}

We consider
the specialization $q_{0}=0$, and set $q_1=\cdots=q_r=1$.\footnote{
Since $q_i$ ($1\leq i\leq r$)
appears in $\Phi_0$ always in the combination $e^{t^i}q_i$,
one can always recover $q_1,\ldots,q_r$ in $\Phi_0$. 
}
Then by Lemma \ref{GW2}, 
we see that the quantum cup product \eqref{quantum-cup} reduces to
\begin{equation}\label{quantum-cup-V}
\begin{split}
\Gamma_i\circ_t\Gamma_j&= c_{ij}\Gamma_{r+1}
+\sum_{k=1}^r 
\Big(\sum_{\begin{subarray}{c}
\beta=\beta_1C_1+\cdots+\beta_r C_r\\
\beta\neq 0\end{subarray}} \beta_i\,\beta_j\,\beta_k\, N^V_{\beta} \,
e^{\beta\cdot t}\Big)\,\Gamma_k^{\vee}
 \quad  (1\leq i,j\leq r)~,
\\
\Delta_i\circ_t *&=\Delta_i\cup *~.
\end{split}
\end{equation}

\begin{lemma}\label{FS-V}
The multiplication \eqref{quantum-cup-V},
the intersection form \eqref{intersection-V} 
(regarded as the multiplication on $T_tH^*(V,\mathbb{C})$
and the metric by the canonical isomorphism $H^*(V)\stackrel{\sim}{\rightarrow}T_tH^*(V)$)and 
the vector field
\begin{equation}\label{Euler-SV}
E=t^0\frac{\partial}{\partial t^0}+2\frac{\partial}{\partial s^0}
-t^{r+1}\frac{\partial}{\partial t^{r+1}}
-\sum_{i=1}^r s^i\frac{\partial}{\partial s^i}
-2s^{r+1}\frac{\partial}{\partial s^{r+1}}
\end{equation}
form a Frobenius structure of charge three 
on $\tilde{M}=H^{*}(V,\mathbb{C})$.
\end{lemma}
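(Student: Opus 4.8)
The plan is to deduce Lemma \ref{FS-V} from Theorem \ref{QH-Frob-str} applied to the smooth projective toric threefold $V=\mathbb{P}(\mathcal{O}_S\oplus K_S)$, together with the computation of its Gromov--Witten potential in Lemma \ref{GW2}. By Theorem \ref{QH-Frob-str} the full quantum cohomology of $V$, equipped with the intersection form \eqref{intersection-V} and the Euler field \eqref{QH-Euler}, is a Frobenius structure of charge $\dim V=3$; convergence of the potential is guaranteed by Iritani's theorem. The metric in the statement is exactly \eqref{intersection-V}. The vector field \eqref{Euler-SV} is nothing but the quantum Euler field \eqref{QH-Euler} for $V$: since $-c_1(K_V)=2\Delta_0$, the only nonzero coefficient in $-c_1(K_V)=\sum_i\xi_i\Gamma_i$ is $\xi=2$ for $\Delta_0$, contributing the term $2\,\partial/\partial s^0$, while the remaining terms of \eqref{Euler-SV} are the degree terms $\tfrac{2-\deg}{2}\,t\,\partial_t$ read off from the basis \eqref{V-basis}. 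Hence the only datum differing from the full quantum cohomology is the multiplication, which by Lemma \ref{GW2} is precisely the specialization of \eqref{quantum-cup} at $q_0=0$, $q_1=\cdots=q_r=1$, namely \eqref{quantum-cup-V}.

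It then remains to check that this specialization preserves the Frobenius structure. The defining conditions --- flatness of the trivial connection and of the unit $\Gamma_0$, the symmetry property \eqref{A1}, the potentiality \eqref{c1}, and the Euler conditions \eqref{E1}, \eqref{E3} --- are all encoded in the genus-zero potential $\Phi(t,s;q)$ and hold as identities that are uniform in the Novikov parameters $q$ (they hold termwise in the $q$-expansion, equivalently for every $q$ in the convergence domain). By Lemma \ref{GW2}, $\Phi=\Phi_{cl}+\Phi_0+\mathcal{O}(q_0)$ is analytic in $q_0$ at $q_0=0$, so the value $q_0=0$, $q_i=1$ lies in the domain and all of these identities survive the specialization. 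In particular associativity (WDVV) of \eqref{quantum-cup-V} and potentiality \eqref{c1} follow at once, while flatness of the connection and of $\Gamma_0$ is immediate because the metric and $\Gamma_0$ are constant.

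The point that needs care is the Euler condition, since it links the homogeneity of $\Phi$ to the grading carried by the Novikov parameters. Here $\deg q_0=\langle-c_1(K_V),C_0'\rangle=2$ is positive, while $\deg q_i=\langle2\Delta_0,\iota_*C_i\rangle=0$ for $1\le i\le r$, the latter because the zero and infinity sections of $V\to S$ are disjoint. Thus setting $q_i=1$ (a degree-zero parameter) and $q_0=0$ (a positive-degree parameter) both respect quasi-homogeneity, so \eqref{E3} holds because the metric and $E$ are unchanged, and \eqref{E1} can be checked directly: the surviving structure constants in \eqref{quantum-cup-V} depend on $t$ only through $e^{\beta\cdot t}$ with $\beta=\sum_{i=1}^r\beta_iC_i$, which is annihilated by $E$ (as \eqref{Euler-SV} contains no term $t^i\,\partial/\partial t^i$ for $1\le i\le r$), whereas $[E,\Gamma_{r+1}]=\Gamma_{r+1}$ and $[E,\Gamma_k^\vee]=\Gamma_k^\vee$ reproduce $[E,\Gamma_i\circ_t\Gamma_j]=\Gamma_i\circ_t\Gamma_j$; together with $[E,\Gamma_i]=0$ this is exactly \eqref{E1}, the remaining cases being classical.

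The main obstacle is precisely this persistence under setting a Novikov parameter to zero: one must ensure that associativity and the Euler homogeneity are not destroyed when $q_0\to0$. The resolution is the observation above that the Frobenius-structure equations are uniform in $q$ and that $q_0$ has positive degree, so the specialization is admissible. Were one instead to verify associativity directly from \eqref{quantum-cup-V}, that WDVV computation --- which unfolds into relations among the local invariants $N_\beta^V$ --- would become the principal difficulty.
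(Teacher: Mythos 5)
Your proposal is correct and follows essentially the same route as the paper: the paper likewise observes that the full quantum product of $V$ depends on $q_0$ only through $e^{s^0}q_0$, that \eqref{quantum-cup-V} is its degree-zero part in that variable, and that associativity, the potentiality \eqref{c1}, and the Euler compatibility therefore descend from Theorem \ref{QH-Frob-str} (with convergence supplied by Iritani). Your extra remarks on the grading of the Novikov parameters and the direct verification of \eqref{E1} are consistent with, and slightly more detailed than, the paper's one-line justifications.
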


\begin{proof}
The quantum cup product on $V$
is a power series in 
$t^0, e^{t^i}q_i (1\leq i\leq r)$, $e^{s^0}q_0,$
$t^{r+1}$, $s^i(1\leq i\leq r+1)$ and it is convergent \cite{Iritani}.  
By Lemma \ref{GW2}, it becomes as follows.
\begin{equation}\nonumber
\begin{split}
\Gamma_i\circ_t\Gamma_j&= c_{ij}\Gamma_{r+1}
+\sum_{k=1}^r 
\Big(\sum_{\begin{subarray}{c}
\beta=\beta_1C_1+\cdots+\beta_r C_r\\
\beta\neq 0\end{subarray}} \beta_i\,\beta_j\,\beta_k\, N^V_{\beta} \,
e^{\beta\cdot t}\Big)\,\Gamma_k^{\vee}+\mathcal{O}(e^{s^0}q_0)
 \quad  (1\leq i,j\leq r)~,
\\
\Delta_i\circ_t *&=\Delta_i\cup *~+\mathcal{O}(e^{s^0}q_0).
\end{split}\end{equation}
The above multiplication \eqref{quantum-cup-V} is 
just the terms of degree zero in $e^{s^0}q_0$ of this product.
Therefore the associativity and the commutativity of \eqref{quantum-cup-V}
follow from those of this quantum cup product.

The Euler vector field \eqref{QH-Euler} for the quantum cohomology of $V$
is given by \eqref{Euler-SV} since $-c_1(K_V)=2\Delta_0$.
The compatibility \eqref{E1} with the multiplication \eqref{quantum-cup-V}
follows from that of the quantum cohomology.

The symmetry \eqref{c1} of $\nabla c$ holds because of the same reason.
\end{proof}
%%%%%%%%%%%%%%%%%%%%%%%%%%%%%%%%%%%%%%%%%%%%%%%%%%%%
%\subsection{Proof of Proposition \ref{toric2}}
%\label{Proof-prop-toric2}
%%%%%%%%%%%%%%%%%%%%%%%%%%%%%%%%%%%%%%%%%%%%%%%%
%%%%%%%%%%%%%%%%%%%%%%%%%%%%%%%%%%%%%%%%%%%%%
\subsection{Proof of Theorem \ref{toric3}}
\label{Proof-prop-toric3}
%%%%%%%%%%%%%%%%%%%%%%%%%%%%%%%%%%%%%%%%%%%%%%

We first apply 
Theorem \ref{thm:nilpotent2}
to the Frobenius structure on $\tilde{M}=H^{*}(V,\mathbb{C})$ 
obtained in Lemma \ref{FS-V}
with the nilpotent vector field $n=\frac{\partial}{\partial s^0}$
which satisfies the conditions \eqref{condition-E-n}
and \eqref{condition-nabla-n}.
%We apply
%quotient construction (\S \ref{nilpotent-construction}
%and \S \ref{quotient-construction})
%to the Frobenius algebra
%$H^*(V,\mathbb{C})$ with the multiplication \eqref{quantum-cup-V}
%and the intersection form \eqref{intersection-V}.

Let us construct the Frobenius filtration. 
Using the canonical isomorphism $T_t\tilde{M}\cong H^*(V,\mathbb{C})$,
we write it down as that of $H^*(V,\mathbb{C})$.
%Since the multiplication by $\Delta_0$ with respect to $\circ_t\,$ 
%is equal to the cup product with $\Delta_0$,
%it is immediate to check that $\Delta_0$ is nilpotent of order $d=4$.
Let $I$ be the ideal generated by $\Delta_0$:
\begin{equation}\nonumber
I=\mathbb{C} \Delta_0\oplus \bigoplus_{i=1}^{r}\mathbb{C}\Delta_i
 \oplus \mathbb{C}\Delta_{r+1}~.
\end{equation}
Then following the construction in \S \ref{nilpotent-construction}, we compute
$J_k=\mathrm{Ker}\,\Delta_0^k$:
\begin{equation}\nonumber
\begin{split}
J_1&=\mathbb{C}\Gamma_{r+1}^{\vee}\oplus 
   \bigoplus_{i=1}^{r}\mathbb{C}\Gamma_i^{\vee}
  \oplus \mathbb{C}\Delta_{r+1}~,
\\
J_2&=\bigoplus_{i=1}^r \mathbb{C}(\Gamma_i-\frac{b_i^{\vee}}{\kappa}\Delta_0)
  \oplus  \mathbb{C}\Gamma_{r+1}\oplus
  \bigoplus_{i=1}^{r}\mathbb{C}\Delta_i\oplus \mathbb{C}\Delta_{r+1}
~,
\\
J_3&=\bigoplus_{i=1}^{r} \mathbb{C}\Gamma_i\oplus \mathbb{C}\Delta_0
\oplus \mathbb{C}\Gamma_{r+1}
\bigoplus_{i=1}^{r}\mathbb{C}\Delta_i\oplus 
 \mathbb{C}\Delta_{r+1}=H^{\geq 2}(V)\supset I~,
\\
J_4&=\mathbb{C} \Gamma_0\oplus J_3=H^*(V)~.
\end{split}
\end{equation}
So $I_k=I+J_k$ ($k=1,2,3,4$) are
 \begin{equation}\label{toric-filtration2}
\begin{split}
I_0&=I~,\\
I_1&=\mathbb{C}\Gamma_{r+1}^{\vee}\oplus \mathbb{C}\Delta_0
    \oplus \mathbb{C}\Gamma_{r+1}
\oplus
\bigoplus_{i=1}^r \mathbb{C}\Delta_i\oplus \mathbb{C}\Delta_{r+1},\\
I_2&=I_3=H^{\geq 2}(V)~,\\
I_4&=H^*(V)~.
\end{split}
\end{equation}
The induced bilinear forms $(~,~)_k$ on $I_k/I_{k-1}$ are as follows.
\begin{equation}\label{toric-bilinear2}
\begin{split}
k=0\quad&
(\Delta_0,\Delta_{r+1})_0=\langle \Delta_0,\Gamma_{r+1}\rangle=1~,\\
&(\Delta_i,\Delta_j)_0=\langle\Delta_i,\Gamma_j\rangle=c_{ij}\quad (1\leq i,j\leq r)~,
\\
k=1\quad &
([\Gamma_{r+1}^{\vee}],[\Gamma_{r+1}])_1
=\langle \Gamma_{r+1}^{\vee},\Gamma_{r+1}\rangle
=1~,
\\
&([\Gamma_{r+1}^{\vee}],[\Gamma_{r+1}^{\vee}])_1=
([\Gamma_{r+1}],[\Gamma_{r+1}])_1=0~,
\\
k=2\quad &
([\Gamma_i],[\Gamma_j])_2
=
\Big\langle \Big(\Gamma_i-\frac{b_i^{\vee}}{\kappa}\Delta_0\Big)\circ_t
\Big(\Gamma_j-\frac{b_j^{\vee}}{\kappa}\Delta_0\big),~ \Delta_0 \Big\rangle
\\&\qquad\quad\quad=c_{ij}-\frac{b_i^{\vee}b_j^{\vee}}{\kappa}~\quad (1\leq i,j\leq r),
\\
k=4\quad&
( [\Gamma_0],[\Gamma_0])_4=\langle \Gamma_0\cup \Gamma_0, \Delta_0^3\rangle 
=\kappa~.
\end{split}
\end{equation}
%By Proposition \ref{prop-nilp}, 
%these form a Frobenius filtration on $H^*(V,\mathbb{C})$.

Thus by Theorem \ref{thm:nilpotent2}, we have the following lemma.
\begin{lemma}\label{toric4}
The trivial connection, the vector field $E$ \eqref{Euler-SV},
the multiplication \eqref{quantum-cup-V},
the filtration \eqref{toric-filtration2} 
and the bilinear forms \eqref{toric-bilinear2}
form a MFS of reference charge four on $\tilde{M}=H^*(V,\mathbb{C})$~.
\end{lemma}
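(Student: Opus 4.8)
The plan is to obtain Lemma~\ref{toric4} as a direct application of Theorem~\ref{thm:nilpotent2} to the Frobenius structure of charge three produced in Lemma~\ref{FS-V}, taking as the nilpotent vector field $n=\frac{\partial}{\partial s^0}$, which corresponds to $\Delta_0\in H^2(V,\mathbb{C})$ under the canonical isomorphism $T_t\tilde{M}\cong H^*(V,\mathbb{C})$. Since $n$ produces a Frobenius filtration via the nilpotent construction of \eqref{nilp1} and Definition~\ref{nilp2}, and since Theorem~\ref{thm:nilpotent2} raises the charge by one, it will immediately yield a MFS of reference charge $3+1=4$ once its three hypotheses on $n$ are checked. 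The bulk of the work is therefore (a) verifying those hypotheses and (b) identifying the resulting filtration and pairings with the explicit data \eqref{toric-filtration2} and \eqref{toric-bilinear2}.

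First I would verify the hypotheses of Theorem~\ref{thm:nilpotent2}. The crucial observation is that, by the second line of \eqref{quantum-cup-V}, multiplication by $\Delta_0$ in the specialized quantum product is the classical cup product, $\Delta_0\circ_t x=\Delta_0\cup x$, which is $t$-independent. Hence the nilpotency order of $n$ is computed entirely from \eqref{cup-V}: one finds $\Delta_0^2=\sum_i b_i\Delta_i$, $\Delta_0^3=\sum_i b_ib_i^{\vee}\Delta_{r+1}=\kappa\,\Delta_{r+1}$, and $\Delta_0^4=0$; since $\kappa>0$ for a weak Fano surface, $\Delta_0^3\neq 0$, so $n$ is nilpotent of order $d=4$. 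Condition \eqref{condition-E-n}, $[E,n]=0$, holds because every component of the Euler field \eqref{Euler-SV} is independent of $s^0$, so the bracket with $\partial_{s^0}$ vanishes. Finally, condition \eqref{condition-nabla-n}, $\nabla(n\circ x)=n\circ\nabla x$, holds because $\Delta_0\,\circ_t$ is the fixed cup-product endomorphism and $\nabla$ is the trivial connection, so multiplication by $\Delta_0$ is a flat bundle homomorphism and commutes with $\nabla$.

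Granting these, Theorem~\ref{thm:nilpotent2} produces a MFS of reference charge four whose filtration is $I_\bullet=I+J_\bullet$ with $I=\mathrm{Im}\,(\Delta_0\,\circ)$ and $J_k=\mathrm{Ker}\,(\Delta_0^{k}\,\circ)$, and whose pairings are those of Definition~\ref{nilp2}. It then remains to match these with the stated data. Computing $I$ and each $J_k$ is a finite linear-algebra problem in $H^*(V,\mathbb{C})$ using \eqref{cup-V} and the dual basis, and carrying it out reproduces the spaces displayed above, hence \eqref{toric-filtration2}. Likewise, evaluating $(x,y)_k=\langle\tilde{x},\tilde{y}\circ\Delta_0^{\,k-1}\rangle$ on the chosen representatives, with the intersection form \eqref{intersection-V}, reproduces \eqref{toric-bilinear2}.

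I expect the only genuinely delicate point to be the level $k=2$. There $J_2$ is spanned not by the naive degree-two classes $\Gamma_i$ but by the corrected elements $\Gamma_i-\frac{b_i^{\vee}}{\kappa}\Delta_0$, the correction being exactly what is annihilated by $\Delta_0^2$: indeed $\Delta_0^2\cup\Gamma_i=b_i^{\vee}\Delta_{r+1}$ while $\Delta_0^3=\kappa\,\Delta_{r+1}$. Taking these as representatives, the Definition~\ref{nilp2} pairing gives $(\Gamma_i,\Gamma_j)_2=c_{ij}-\frac{b_i^{\vee}b_j^{\vee}}{\kappa}$, the primitive intersection form. A useful consistency check is that $(b_i)$ lies in the kernel of this matrix, matching the relation $\sum_i b_i\Gamma_i=\Delta_0-\Gamma_{r+1}^{\vee}\in I_1$ together with $\dim I_2/I_1=r-1$; this is precisely the nondegeneracy on the graded quotient that a Frobenius filtration demands.
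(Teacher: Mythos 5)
Your proposal is correct and follows essentially the same route as the paper: apply Theorem~\ref{thm:nilpotent2} to the charge-three Frobenius structure of Lemma~\ref{FS-V} with $n=\frac{\partial}{\partial s^0}\leftrightarrow\Delta_0$, then compute $I=\mathrm{Im}(\Delta_0\circ)$, $J_k=\mathrm{Ker}(\Delta_0^k\circ)$ and the induced pairings to match \eqref{toric-filtration2} and \eqref{toric-bilinear2}. If anything you are slightly more complete than the paper, which leaves the verification of \eqref{condition-E-n} and \eqref{condition-nabla-n} implicit; your checks of these, and the $k=2$ computation with representatives $\Gamma_i-\frac{b_i^{\vee}}{\kappa}\Delta_0$, are all accurate.
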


Next we apply Corollary \ref{quotient-construction2}.
Since 
$I$ is the kernel of 
the pullback
$\iota^*: H^*(V,\mathbb{C})\to H^*(S,\mathbb{C})$ 
by the inclusion $\iota:S\hookrightarrow K_S\subset V$,
if we set $$
\tilde{M}^{(1)}=\{s^0=s^1=\cdots=s^{r+1}=0\}\subset \tilde{M}~,
$$
then it is naturally isomorphic to $H^*(S,\mathbb{C})$.
Theorem \ref{toric3} follows from 
Corollary \ref{quotient-construction2} and Lemma \ref{FS-V}
together with the fact that $N^V_{\iota_*\beta}=N_{\beta}$
for $\beta \in H_2(S, \mathbb{Z})$ (see e.g. \cite[Proposition 2.2]{KonishiMinabe}).

\begin{remark}
It is not difficult to see that the argument in \S \ref{H-V} and \S \ref{Proof-prop-toric3} shows that the same 
result as in Theorem \ref{toric3} holds for any smooth projective surface $S$ with nef 
anticanonical bundle. (For such an $S$, the quantum cup products \eqref{toric-multiplication} 
and \eqref {quantum-cup-V} make sense  without any change as formal power series.) 
So we have a formal MFS on the even part of
$H^*(S, \mathbb{C})$. If $S$ is toric, then we have a genuine 
MFS since the quantum cup product is known to be convergent (cf. Remark \ref{rem:conv}).
This is the only point where we used the toric assumption on $S$.   
\end{remark}

\section{Local quantum cohomology of $\mathbb{P}^3$}
\label{example-localP3}
%%%%%%%%%%%%%%%%%%%%%%%%%%%%%%%%%%%%%%%%%%%%%%%%%%%%%%%%%
In this section, 
we  construct a mixed Frobenius structure on the cohomology of the 
projective space $\mathbb{P}^3$
similar to the one in \S \ref{example-toricsurface}.

%%%%%%%%%%%%%%%%%%%%
\subsection{Results}
%%%%%%%%%%%%%%%%%%%%
Take the following basis of the cohomology $H^*(\mathbb{P}^3,\mathbb{C})$:
$$
\gamma_0=1~,\quad
\gamma_1=c_1(\mathcal{O}_{\mathbb{P}^3}(1))~,\quad
\gamma_2=\gamma_1\cup\gamma_1~,\quad
\gamma_3=\gamma_1\cup \gamma_2~.
$$
Let $t^0,t^1,t^2,t^3$ be the associated coordinates.
We identify $H_2(\mathbb{P}^3,\mathbb{Z})$ with $\mathbb{Z}$
by $\beta \mapsto \int_\beta \gamma_1$.

We consider the following filtration\footnote{\eqref{P3-filtration} and \eqref{P3-bilinear} 
agree with those in Example \ref{example-Pn} with $m=-4$
if the filtration is shifted by one.} on $H^*(\mathbb{P}^3)$.
\begin{equation}\label{P3-filtration}
\begin{split}
I_0&=0~,\\
I_1&=\cdots=I_{4}=H^{\geq 2}(\mathbb{P}^3)~,
\\
I_{5}&=H^*(\mathbb{P}^3)~.
\end{split}
\end{equation}
On the graded quotients, we consider the bilinear forms:
\begin{equation}\label{P3-bilinear}
\begin{split}
I_1/I_0~:~&\quad (\gamma_k,\gamma_l)_1=\begin{cases}
-\frac{1}{4}&(k+l=4)\\
0&(k+l\neq 4)~
\end{cases}~,
\\
I_{5}/I_{4}~:~&\quad
(1,1)_{5}=4^3~.
\end{split}
\end{equation}

Next we define the local quantum cup product.
Let $\overline{M}_{g,n}(\mathbb{P}^3,\beta)$ 
be the moduli stack of $n$-pointed genus $g$ stable maps to $\mathbb{P}^3$
of degree $\beta\in H_2(\mathbb{P}^3,\mathbb{Z})\cong \mathbb{Z}$,
$ev_i:\overline{M}_{g,n}(\mathbb{P}^3,\beta)\to \mathbb{P}^3$ 
be the evaluation map
at the $i$-th marked point and
$\mu:\overline{M}_{g,2}(\mathbb{P}^3,\beta)\to  
\overline{M}_{g,1}(\mathbb{P}^3,\beta)$
be the forgetful map.

\begin{definition}
For $\beta\neq 0$, we define the 
genus zero local Gromov--Witten invariant $N_{\beta}\in\mathbb{Q}$ 
of degree $\beta\in H_2(\mathbb{P}^3,\mathbb{Z})$ by
 \begin{equation}\label{local-GW-P3}
N_{\beta}:=\int_{[\overline{M}_{0,1}(\mathbb{P}^3,\beta)]^{vir}}
ev_1^*\gamma_2\cup e(R^1 \mu_*ev_2^*\mathcal{O}_{\mathbb{P}^3}(-4))~.
\end{equation}
\end{definition}

These numbers are computed in \cite[Table 1]{KlemmPandharipande}.
%\begin{remark}
%If $\beta<0$, $N_{\beta}=0$ holds because the moduli stack 
%$\overline{M}_{0,1}(\mathbb{P}^3,\beta)$ is empty.
%%For $\beta>0$, $N_{\beta}$ can be computed by localization. For example,
%%$N_{1}=-20$ and $N_{2}=-825$~.
%These numbers are computed in \cite[Table 1]{KlemmPandharipande}.
%%The integers 
%%$n_{0,\beta}$ there are related to $N_{\beta}$ by
%%$$
%%N_{\beta}=\sum_{
%%d|\beta}\,
%%\frac{n_{0,\beta/d}}{d^2}~,
%%$$
%%where the sum is over all positive integers $d$ which divide $\beta$.
%\end{remark}

\begin{definition}
Let 
\begin{equation}\label{local-pot-P3}
\Phi_{qu}(t^1,t^2):=t^2 \sum_{\beta>0}N_{\beta}\, e^{\beta t^1}~.
\end{equation}
The local quantum cup product $\circ_t$ on $H^*(\mathbb{P}^3,\mathbb{C})$ 
is the family of multiplications given by
\begin{equation}\label{P3-multiplication1}
\gamma_i\circ_t \gamma_j=\gamma_i \cup \gamma_j
-4\sum_{k=1}^2\frac{\partial^3 \Phi_{qu}}{\partial t^i\partial t^j\partial t^k}\,\gamma_{4-k}~.
\end{equation}
\end{definition}
More explicitly, the local quantum cup product is given by
\begin{equation}\label{P3-multiplication2}
\begin{split}
\gamma_0\circ_t&=\gamma_0\cup~,\quad \gamma_3\circ_t=\gamma_3\cup~,\\
\gamma_1\circ_t\gamma_1&=
\gamma_2-4\Big(\sum_{\beta>0}\beta^2 N_{\beta}\,e^{\beta t^1}\Big)\gamma_2
 -4\Big(t^2\sum_{\beta>0}\beta^3 N_{\beta}\,e^{\beta t^1}\Big)\gamma_3~,\\
\gamma_1\circ_t\gamma_2&=\gamma_3
-4\Big(\sum_{\beta>0}\beta^2 N_{\beta}\,e^{\beta t^1}\Big)\gamma_3~,\\
\gamma_2\circ_t\gamma_2&=0~.
\end{split}
\end{equation}

%\begin{proposition}\label{prop-P3}
%Regard $H^*(\mathbb{P}^3,\mathbb{C})$ as an algebra by the local quantum 
%cup product \eqref{P3-multiplication2}. Then
%the filtration \eqref{P3-filtration} and the bilinear forms 
%\eqref{P3-bilinear}
%form a Frobenius filtration on $H^*(\mathbb{P}^3,\mathbb{C})$.
%\end{proposition}
%A proof will be given in \S \ref{proof-prop-P3}.

Regard $M=H^*(\mathbb{P}^3,\mathbb{C})$ as a manifold.
We identify each tangent space $T_t M$ with $H^*(\mathbb{P}^3,\mathbb{C})$
by $\frac{\partial}{\partial t^i}\mapsto \gamma_i$.
Let $\nabla$ be the trivial connection on $TM$
such that $\frac{\partial}{\partial t^i}$ are flat sections.
We set
\begin{equation}\label{Euler-localP3}
E=t^0\frac{\partial}{\partial t^0}-t^2\frac{\partial}{\partial t^2}-2
t^3\frac{\partial}{\partial t^3}.
\end{equation}
\begin{theorem}\label{prop-P3-2}
The trivial connection $\nabla$, the vector field $E$ \eqref{Euler-localP3},
the local quantum cup product \eqref{P3-multiplication2},
the filtration \eqref{P3-filtration} and the bilinear forms \eqref{P3-bilinear}
form a mixed Frobenius structure on $M$ of reference charge five.
\end{theorem}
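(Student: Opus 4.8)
The plan is to prove Theorem~\ref{prop-P3-2} in complete analogy with the proof of Theorem~\ref{toric3}, working over the projective compactification
\[
V=\mathbb{P}(\mathcal{O}_{\mathbb{P}^3}\oplus K_{\mathbb{P}^3})
 =\mathbb{P}(\mathcal{O}_{\mathbb{P}^3}\oplus\mathcal{O}_{\mathbb{P}^3}(-4)),
\]
which is a smooth projective toric fourfold with projection $\mathrm{pr}\colon V\to\mathbb{P}^3$. By Theorem~\ref{QH-Frob-str} its quantum cohomology carries a Frobenius structure of charge $\dim V=4$. I would produce a mixed Frobenius structure on $H^*(V,\mathbb{C})$ of reference charge five by the nilpotent construction (Theorem~\ref{thm:nilpotent2}), and then descend it to $H^*(\mathbb{P}^3,\mathbb{C})$ by the quotient construction (Corollary~\ref{quotient-construction2}), which preserves the reference charge. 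This already accounts for the value five in the statement.

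First I would fix a basis of the eight--dimensional ring $H^*(V,\mathbb{C})$ as $\Gamma_i=\mathrm{pr}^*\gamma_i$ $(0\le i\le 3)$ together with $\Delta_i=\Gamma_i\cup\Delta_0$, where $\Delta_0$ is the Poincar\'e dual of the infinity section, and compute the cup product and intersection form by toric intersection theory. The $\mathbb{P}^1$--bundle relation $\Delta_0^2=4\Delta_1$ gives $\Delta_0^4=64\,\Delta_3\neq0$ and $\Delta_0^5=0$, so $\Delta_0$ is nilpotent of order five. I would then reproduce Lemma~\ref{FS-V}: decomposing the Gromov--Witten potential of $V$ by the fiber degree as in Lemma~\ref{GW2}, specializing the fiber--class parameter $q_0$ to zero and invoking Iritani's convergence result, the resulting multiplication, the intersection form and the Euler field \eqref{QH-Euler} (with $-c_1(K_V)=2\Delta_0$, so that its constant part is twice the flat coordinate vector field dual to $\Delta_0$, in analogy with \eqref{Euler-SV}) form a Frobenius structure of charge four on $H^*(V,\mathbb{C})$.

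Next I would apply Theorem~\ref{thm:nilpotent2} with the nilpotent vector field $n$ corresponding to the class $\Delta_0$. The two hypotheses are immediate: $[E,\Delta_0]=0$ because no component of $E$ depends on the coordinate dual to $\Delta_0$ (condition~\eqref{condition-E-n}), and $\nabla(\Delta_0\circ x)=\Delta_0\circ\nabla x$ because after the specialization $\Delta_0$ multiplies by the topological, hence $\nabla$--parallel, cup product (condition~\eqref{condition-nabla-n}). This yields a mixed Frobenius structure on $H^*(V,\mathbb{C})$ of reference charge five. Since $I=(\Delta_0)$ is precisely the kernel of $\iota^*\colon H^*(V,\mathbb{C})\to H^*(\mathbb{P}^3,\mathbb{C})$ for the zero--section inclusion $\iota$, Corollary~\ref{quotient-construction2} descends it to $H^*(\mathbb{P}^3,\mathbb{C})$. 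A direct computation of $J_k=\ker(\Delta_0^{\,k}\cup)$ and of the induced forms, parallel to \eqref{toric-filtration2} and \eqref{toric-bilinear2}, should reproduce the filtration \eqref{P3-filtration} and the bilinear forms \eqref{P3-bilinear}; in particular $(1,1)_5=\langle\Gamma_0,\Delta_0^4\rangle=64=4^3$, and the descended Euler field is \eqref{Euler-localP3}, since its linear part has coefficients $\tfrac{2-\deg\gamma_i}{2}=1,0,-1,-2$.

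The step I expect to be the real obstacle is matching the descended product with the stated local quantum product \eqref{P3-multiplication2}, which amounts to identifying the fiber--degree--zero Gromov--Witten invariants of $V$ with the local invariants $N_\beta$ of $\mathbb{P}^3$ defined in \eqref{local-GW-P3}. Here the situation genuinely differs from the surface case: because $\mathbb{P}^3$ is odd--dimensional, a virtual--dimension count forces the extra insertion $ev_1^*\gamma_2$ in \eqref{local-GW-P3}, so the fiber--degree--zero part of the potential contributing to the product is not captured by the divisor axiom alone but by the term linear in the coordinate $t_2$ of $\Gamma_2=\mathrm{pr}^*\gamma_2$, namely $t_2\sum_{\beta>0}N_\beta^V e^{\beta t_1}$ with $N_\beta^V=\int_{[\overline{M}_{0,1}(V,\iota_*\beta)]^{vir}}ev_1^*\Gamma_2$. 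The required equality $N_\beta^V=N_\beta$ follows from the standard local/ambient correspondence: fiber--degree--zero stable maps factor through the zero section $\iota(\mathbb{P}^3)$, whose normal bundle is $K_{\mathbb{P}^3}=\mathcal{O}(-4)$, and the virtual class relation converts the ambient invariant into the integral of $ev_1^*\gamma_2\cup e(R^1\mu_*ev_2^*\mathcal{O}(-4))$ over $[\overline{M}_{0,1}(\mathbb{P}^3,\beta)]^{vir}$. Once this identification is in place, differentiating $\Phi_{qu}$ as in \eqref{P3-multiplication1} reproduces \eqref{P3-multiplication2}, completing the proof.
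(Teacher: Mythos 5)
Your proposal is correct and follows essentially the same route as the paper: build the Frobenius structure of charge four on $H^*(V,\mathbb{C})$ for $V=\mathbb{P}(\mathcal{O}_{\mathbb{P}^3}\oplus\mathcal{O}_{\mathbb{P}^3}(-4))$ via the fiber-degree decomposition of the potential and the specialization $q_0=0$, apply the nilpotent construction with $\partial/\partial s^0$ and then the quotient construction, and reduce the key identification of the fiber-degree-zero invariants with the local invariants $N_\beta$ (forced by the virtual-dimension count to carry the extra $ev_1^*\gamma_2$ insertion) to a $\mathbb{C}^*$-localization on the zero section. The paper's Lemma 8.3 also records the companion vanishing $\int_{[\overline{M}_{0,1}(V,\beta_1C_1')]^{vir}}ev_1^*\Delta_1=0$, which you would need to make explicit to conclude $\Phi_0=\Phi_{qu}$, but this is the same argument.
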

A proof will be given in \S \ref{proof-prop-P3-2}.

%%%%%%%%%%%%%%%%%%%%%%%%%%%%%%%%%%
\subsection{Quantum cohomology of  $V=\mathbb{P}(\mathcal{O}_{\mathbb{P}^3}
\oplus\mathcal{O}_{\mathbb{P}^3}(-4))$ }
%%%%%%%%%%%%%%%%%%%%%%%%%%%%%%%%%%
Let $V=\mathbb{P}(\mathcal{O}_{\mathbb{P}^3}
\oplus\mathcal{O}_{\mathbb{P}^3}(-4))$ 
be the projective compactification of the line bundle 
$\mathcal{O}(-4)\to \mathbb{P}^3$.
Let $\mathrm{pr}: V\to \mathbb{P}^3$ be the projection
and let $\Gamma_i=\mathrm{pr}^*\gamma_i$ ($0\leq i\leq 3$).
Let $\Delta_0\in H^2(V,\mathbb{C})$ be the Poincar\'e dual of the 
infinity section 
%of $\mathcal{O}_{\mathbb{P}^3}\subset V$ 
and 
$\Delta_i=\Delta_0\cup \Gamma_i\in H^{2(i+1)}(V,\mathbb{C})$.
The cohomology $H^*(V,\mathbb{C})$ is spanned by the basis
\begin{equation}\label{basisV2}
\Gamma_0=1~,\quad \underbrace{\Gamma_1~,\Delta_0}_{H^2(V)}~,
\quad
\underbrace{\Gamma_2~,\Delta_1}_{H^4(V)}~,
\quad
\underbrace{\Gamma_3~,\Delta_2}_{H^6(V)}~,
\quad
\underbrace{\Delta_3}_{H^8(V)}~.
\end{equation}

The cup product is given as follows.
\begin{equation}\label{intersectionV2}
\begin{split}
\Gamma_i\cup \Gamma_j&=\begin{cases}\Gamma_{i+j}&(i+j\leq 3)\\
                                   0&(i+j>3)\end{cases}~,\quad
\\
\Delta_i\cup \Gamma_j&=\begin{cases} \Delta_{i+j}&(i+j\leq 3)\\
                                    0&(i+j>3)\end{cases}~,\quad
\\
\Delta_i\cup\Delta_j&=\begin{cases}4\Delta_{i+j+1} &(i+j<3)\\
                                    0&(i+j\geq 3)\end{cases}~.
\end{split}
\end{equation}
The intersection form $\langle~,~\rangle$ is computed from the cup product
and $\int_V \Delta_3=1$. Explicitly, pairings which do not vanish are
\begin{equation}\label{intersection-V2}
\begin{split}
&\langle \Gamma_0,\Delta_3\rangle=1~,\quad
\langle \Gamma_k,\Delta_{3-k}\rangle=1~,
\\
&\langle \Delta_{k-1},\Delta_{3-k}\rangle=4~\quad 
(1\leq k\leq 3)~.
\end{split}
\end{equation}
The dual basis of \eqref{basisV2} is given by
\begin{equation}\nonumber
\begin{split}
&\Gamma_0^{\vee}=\Delta_3~,\quad
\Delta_3^{\vee}=\Gamma_0~,
\\
&\Gamma_k^{\vee}=\Delta_{3-k}-4\Gamma_{4-k}~,\quad
{\Delta^{\vee}_{k-1}}=\Gamma_{4-k}\quad (1\leq k\leq 3)~.
\end{split}
\end{equation}

Now we consider the quantum cup product.
First we  fix a basis of $H_2(V,\mathbb{Z})$. 
Let
$\iota:\mathbb{P}^3\rightarrow V$ be the inclusion as the zero 
section of $\mathcal{O}_{\mathbb{P}^3}(-4)\subset V$ and 
$C_1'=\iota_*\mathbb{P}^1$.
Let $C_{0}'$ be the  fiber class of the $\mathbb{P}^1$-bundle 
$\mathrm{pr}:V\to \mathbb{P}^3$.
Then $\{C_0',C_{1}'\}$ is a basis of $H_2(V,\mathbb{Z})$
which is dual to the basis $\{\Delta_0,\Gamma_1\}$ 
of $H^2(V,\mathbb{C})$.
Let $t^0,t^1,s^0,t^2,s^1,t^3,s^2,s^3$
be the coordinates of $H^*(V)$ associated with the basis \eqref{basisV2}.
Let $q_0,q_1$ be  parameters
associated to $C_0',~C_{1}'$.

\begin{lemma}\label{lemP3}
The genus zero Gromov--Witten potential $\Phi(t,s\,;q)$ 
of $V$ becomes as follows.
\begin{equation}\nonumber
\begin{split}
\Phi(t,s\,;q)&=\Phi_{cl}+\Phi_{qu}+\Phi_1~,
\\
\Phi_{cl}(t,s)&=
\frac{1}{2}\sum_{\begin{subarray}{c} 0\leq k,l\leq 3,\\k+l\leq 3 \end{subarray}}
t^kt^ls^{3-k-l}
+\frac{4}{2}
\sum_{\begin{subarray}{c}0\leq k,l\leq 3,\\k+l\leq 2 \end{subarray}}
t^{2-k-l}s^ks^l~,
\\
\Phi_1&=\mathcal{O}(q_{0})~.
\end{split}
\end{equation}
Here $\Phi_{qu}$ is the function in $t^1,t^2$ defined in  \eqref{local-pot-P3}.
\end{lemma}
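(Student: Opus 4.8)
The plan is to mirror the proof of Lemma \ref{GW2}, decomposing the Gromov--Witten potential of $V$ according to the fiber degree. Writing a class as $\beta=\beta_0 C_0'+\beta_1 C_1'$, I would split $\Phi$ into the contribution $\Phi_{cl}$ of $\beta=0$, the contribution of classes with $\beta_0=0$ and $\beta\neq 0$ (which I claim is $\Phi_{qu}$), and the remaining contribution $\Phi_1$ of classes with $\beta_0\neq 0$.

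The two outer pieces are handled exactly as before. By the point-mapping axiom $\Phi_{cl}$ is the cubic form \eqref{GW-classical} built from the triple intersection numbers, which one reads off from the cup-product table \eqref{intersectionV2}; since a product of pullback classes $\Gamma_i$ alone never reaches the top class $\Delta_3$, only mixed products survive, and assembling them gives $\Phi_{cl}$. For $\Phi_1$, classes with $\beta_0<0$ are not effective so the moduli stacks $\overline{M}_{0,n}(V,\beta)$ are empty, while $\beta_0>0$ carries a positive power of $q_0$; hence $\Phi_1=\mathcal{O}(q_0)$.

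The heart of the argument is to show that the $\beta_0=0$ part equals $\Phi_{qu}$ of \eqref{local-pot-P3}. Because $-c_1(K_V)=2\Delta_0$, the virtual dimension \eqref{virt-dim} of $\overline{M}_{0,n}(V,\beta)$ is $1+2\beta_0+n$, which equals $n+1$ when $\beta_0=0$. A degree count then forces the $n$ insertions to have total complex codimension $n+1$: after using the fundamental class axiom to drop identity insertions, exactly one marked point must carry the codimension-two class $\Gamma_2$ (coefficient $t^2$) and the other $n-1$ must carry divisor classes. The divisor axiom sends each $\Gamma_1$ to the factor $\beta_1 t^1$ and each $\Delta_0$ to $(\int_\beta\Delta_0)\,s^0=\beta_0 s^0=0$, so only the $\Gamma_1$ insertions contribute. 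Summing over the choice of the distinguished marked point and resumming $\sum_{n\geq 1}\frac{1}{(n-1)!}(\beta_1 t^1)^{n-1}$ into $e^{\beta_1 t^1}$ (with $q_1$ absorbed into the exponential) yields $t^2\sum_{\beta>0}(\,\cdots\,)\,e^{\beta t^1}$.

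It remains to identify the coefficient $(\,\cdots\,)$ with the local invariant $N_\beta$ of \eqref{local-GW-P3}, and this is the step I expect to be the main obstacle. A one-pointed $V$-invariant for a class with $\beta_0=0$ is supported on stable maps whose image lies in the zero section $\mathbb{P}^3\subset V$, and by functoriality of the virtual class for this embedding---whose excess bundle is the normal bundle $\mathcal{O}_{\mathbb{P}^3}(-4)$---the ambient virtual class restricts to $e\big(R^1\mu_* ev_2^*\mathcal{O}(-4)\big)\cap[\overline{M}_{0,1}(\mathbb{P}^3,\beta)]^{vir}$. Under this reduction the ambient integral with the insertion $\Gamma_2=\mathrm{pr}^*\gamma_2$ becomes precisely the local integral \eqref{local-GW-P3}, so the coefficient is $N_\beta$ and the $\beta_0=0$ part is $\Phi_{qu}$. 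The delicate points are justifying that no maps deform off the zero section (using the negativity of $\mathcal{O}(-4)$) and checking that the rank $4\beta-1$ of $R^1\mu_*$ together with the codimension of $\gamma_2$ saturates the virtual dimension $4\beta+1$ of $\overline{M}_{0,1}(\mathbb{P}^3,\beta)$; these are the standard local Gromov--Witten inputs and constitute the genuine content of the lemma.
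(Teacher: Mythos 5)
Your proposal is correct and its skeleton is the same as the paper's: the same three-way decomposition by the fiber degree $\beta_0$, the same computation of $\Phi_{cl}$ from triple intersections, the same dismissal of $\Phi_1$ by effectivity, and the same use of the fundamental class and divisor axioms plus the virtual dimension count $1+n$ to reduce the $\beta_0=0$ part to one-pointed invariants with a single codimension-two insertion. Where you genuinely diverge is in the key reduction $\int_{[\overline{M}_{0,1}(V,\beta_1 C_1')]^{vir}} ev_1^*\Gamma_2=N_{\beta_1}$: the paper proves this (as a separate lemma) by $\mathbb{C}^*$-localization with respect to the fiberwise torus action, identifying the fixed locus with $\iota(\overline{M}_{0,1}(\mathbb{P}^3,\beta_1\mathbb{P}^1))$ and reading off $e(R^1\mu_*ev_2^*\mathcal{O}(-4))$ as the inverse Euler class of the virtual normal bundle, whereas you invoke the set-theoretic support of the moduli space on the zero section (via negativity of $\mathcal{O}(-4)$ and the vanishing $\int_{C_1'}\Delta_0=0$) together with functoriality of virtual classes for the embedding. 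Both are standard and both land on the same excess bundle; localization has the advantage of not requiring the support statement, while your route makes the geometric reason for the reduction more transparent. One small omission: after the degree count there are \emph{two} codimension-two insertions available, $t^2\Gamma_2$ and $s^1\Delta_1$, and you only account for $\Gamma_2$. The paper disposes of the $\Delta_1$ insertion explicitly by showing $\int_{[\overline{M}_{0,1}(V,\beta_1C_1')]^{vir}}ev_1^*\Delta_1=0$, which follows from $\iota^*\Delta_1=\iota^*(\Delta_0\cup\Gamma_1)=0$; your zero-section reduction gives this for free, but you should say so, since otherwise $\Phi_0$ could a priori contain $s^1$-dependent terms and would not equal $\Phi_{qu}(t^1,t^2)$.
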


\begin{proof}
Notice that the class $\beta\in H_2(V,\mathbb{Z})$ of an effective curve  can be written as  
$\beta=\beta_0C_0'+\beta_1C_1'\neq 0$ with
$\beta_0,\beta_1\geq 0$.
Since the moduli of the stable maps $\overline{M}_{0,n}(V,\beta)$ ($\beta\neq 0$)
is empty if $\beta$ is not effective, 
we can decompose the Gromov--Witten potential $\Phi$ into 
three parts: $\Phi_{cl}$ which is the contribution of $\beta=0$,
$\Phi_0$ which is the contribution of homology classes
$\beta$ such that  $\beta=\beta_1 C_1'$ ($\beta_1>0$), 
and the remaining part $\Phi_{1}$
which is the  contribution of $\beta=\beta_0 C_0'+\beta_1 C_1'$ 
($\beta_0,\beta_1\geq 0$)
with $\beta_0\neq 0$.

$\Phi_{cl}$ can be computed by the triple intersection 
(see \eqref{GW-classical}).

%In $\Phi_1$, the terms with $\beta_0<0$ vanish because
%such $\beta$ are not effective and the moduli stack
%$\overline{M}_{0,n}(V,\beta)$ is empty.
%Therefore 
We have $\Phi_1=\mathcal{O}(q_0)$.

Now we consider $\Phi_0$.
Notice that since $-c_1(K_V)=2\Delta_0$, 
the virtual dimension \eqref{virt-dim} of $\overline{M}_{0,n}(V,\beta)
$
is $1+n$ if $\beta_0=0$.
%with 
%\beta=\beta_0 C_0'+\beta_1 C_1'$ 
%$is $1+2\beta_0+n$. 
%If $\beta_0=0$,
Therefore we have
\begin{equation}\nonumber
\begin{split}
\int_{[\overline{M}_{0,n}(V,\beta)]^{vir}}
\prod_{i=1}^n \,ev^*_i\mathbf{t}
&=\sum_{j=1}^n \int_{[\overline{M}_{0,n}(V,\beta)]^{vir}}
ev^*_j \mathbf{t}''\,
\prod_{\begin{subarray}{c}1\leq i\leq n;\\i\neq j\end{subarray}}
\,ev^*_i\mathbf{t}'~
\\
&=n (\beta_1 t^1)^{n-1}
\int_{[\overline{M}_{0,1}(V,\beta)]^{vir}}ev^*_1 \mathbf{t}'' 
\\
&=n(\beta_1 t^1)^{n-1} t^2
\int_{[\overline{M}_{0,1}(V,\beta)]^{vir}}ev^*_1 \Gamma_2~
\\
&=n(\beta_1 t^1)^{n-1} t^2\,N_{\beta_1}~,
\\
&\Big(\mathbf{t}'=t^1\Gamma_1 +s^0\Delta_0~,\quad 
\mathbf{t}''=t^2\Gamma_2+s^1 \Delta_1\Big)~.
\end{split}
\end{equation}
Here $N_{\beta_1}$ 
is the genus zero local Gromov--Witten invariant 
of $\mathbb{P}^3 $ defined in \eqref{local-GW-P3}.
The first equality follows from the degree consideration and 
the fundamental class axiom,
the second equality follows from the divisor axiom.
The third and fourth equalities follow from the next lemma.
This shows that $\Phi_0=\Phi_{qu}$.
\end{proof}
\begin{lemma}
For $\beta_1>0$, 
\begin{equation}\begin{split}\nonumber
&
\int_{[\overline{M}_{0,1}(V,\,\beta_1 C_1')]^{vir}}ev^*_1 \Delta_1=0~,
\\
&\int_{[\overline{M}_{0,1}(V,\,\beta_1 C_1')]^{vir}}ev^*_1 \Gamma_2
=N_{\beta_1}~.
\end{split}
\end{equation}
\end{lemma}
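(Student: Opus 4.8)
The plan is to reduce both integrals over the moduli space of stable maps to $V$ in the pure section class $\beta_1 C_1'$ to integrals over $\overline{M}_{0,1}(\mathbb{P}^3,\beta_1)$, and then to identify the latter with the definition \eqref{local-GW-P3} of $N_{\beta_1}$. The geometric input is that $\iota(\mathbb{P}^3)$ is the zero section of $\mathcal{O}_{\mathbb{P}^3}(-4)\subset V$, with normal bundle $\mathcal{O}_{\mathbb{P}^3}(-4)$, and that it is disjoint from the section dual to $\Delta_0$.

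First I would show that every stable map $f\colon C\to V$ with $f_*[C]=\beta_1 C_1'$ has image in $\iota(\mathbb{P}^3)$. Since $\beta_1 C_1'\cdot\Delta_0=0$, such a curve is disjoint from the section dual to $\Delta_0$ and hence lies in the total space of $\mathcal{O}_{\mathbb{P}^3}(-4)=V\setminus(\text{that section})$; there $f$ is recorded by a section $v\in H^0(C,\bar f^*\mathcal{O}_{\mathbb{P}^3}(-4))$, where $\bar f=\mathrm{pr}\circ f\colon C\to\mathbb{P}^3$ has degree $\beta_1$, and $f(C)\subset\iota(\mathbb{P}^3)$ exactly when $v\equiv 0$. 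Because the restriction of $\bar f^*\mathcal{O}_{\mathbb{P}^3}(-4)$ to each component has degree $-4d_i\le 0$ (with $d_i\ge 0$ the local degree of $\bar f$) while the total degree $-4\beta_1$ is strictly negative, a short induction on the components of the genus zero tree $C$ gives $H^0(C,\bar f^*\mathcal{O}_{\mathbb{P}^3}(-4))=0$, so $v\equiv 0$. This produces an isomorphism $\overline{M}_{0,1}(V,\beta_1 C_1')\cong\overline{M}_{0,1}(\mathbb{P}^3,\beta_1)$ under which $ev_1^V=\iota\circ ev_1$. The first integral is then immediate: disjointness gives $\iota^*\Delta_0=0$, hence $ev_1^*\Delta_1=ev_1^*(\Delta_0\cup\Gamma_1)=0$ as a class on the moduli space, and the integral vanishes regardless of the virtual class.

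For the second integral I would compare virtual classes. The vanishing just established says $R^0\mu_* ev_2^*\mathcal{O}_{\mathbb{P}^3}(-4)=0$, so $\mathrm{Ob}:=R^1\mu_* ev_2^*\mathcal{O}_{\mathbb{P}^3}(-4)$ is a genuine vector bundle on $\overline{M}_{0,1}(\mathbb{P}^3,\beta_1)$ of the expected rank. The standard comparison of perfect obstruction theories for the closed embedding $\iota$, whose normal bundle is $\mathcal{O}_{\mathbb{P}^3}(-4)$, then yields
\[
[\overline{M}_{0,1}(V,\beta_1 C_1')]^{vir}=e(\mathrm{Ob})\cap[\overline{M}_{0,1}(\mathbb{P}^3,\beta_1)]^{vir}.
\]
Using $\iota^*\Gamma_2=\gamma_2$ (from $\mathrm{pr}\circ\iota=\mathrm{id}$), the second integral becomes $\int_{[\overline{M}_{0,1}(\mathbb{P}^3,\beta_1)]^{vir}} ev_1^*\gamma_2\cup e(\mathrm{Ob})$, which is exactly $N_{\beta_1}$ by \eqref{local-GW-P3}, since the universal curve over $\overline{M}_{0,1}(\mathbb{P}^3,\beta_1)$ is $\mu\colon\overline{M}_{0,2}(\mathbb{P}^3,\beta_1)\to\overline{M}_{0,1}(\mathbb{P}^3,\beta_1)$ with universal map $ev_2$. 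A quick check of virtual dimensions ($2$ on both sides) confirms the bookkeeping.

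I expect the main obstacle to be the virtual class comparison: one must argue carefully that, pulled back along the isomorphism of moduli spaces, the perfect obstruction theory of maps to $V$ differs from that of maps to $\mathbb{P}^3$ precisely by the two-term complex $R^\bullet\mu_* ev_2^*\mathcal{O}_{\mathbb{P}^3}(-4)$, whose $H^0$ vanishes. This excess-obstruction argument is standard in the computation of local Gromov--Witten invariants, but it is the only step that is not formal, and the genus zero negative-degree vanishing is exactly what forces the excess to contribute only through the Euler class of $\mathrm{Ob}$.
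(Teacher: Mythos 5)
Your argument is correct, but it reaches the key identity by a genuinely different route from the paper. The paper introduces the fiberwise $\mathbb{C}^*$-action on $V$ and applies the virtual localization formula of Graber--Pandharipande: the fixed locus in $\overline{M}_{0,1}(V,\beta_1C_1')$ is $\iota(\overline{M}_{0,1}(\mathbb{P}^3,\beta_1\mathbb{P}^1))$, and since $R^0\mu_*ev_2^*\mathcal{O}_{\mathbb{P}^3}(-4)=0$ the virtual normal bundle is $-R^1\mu_*ev_2^*\mathcal{O}_{\mathbb{P}^3}(-4)$, so the inverse equivariant Euler class in the localization formula becomes the ordinary Euler class $e(R^1\mu_*ev_2^*\mathcal{O}_{\mathbb{P}^3}(-4))$ (the paper cites its earlier work for this step). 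You instead prove the stronger non-equivariant statement that \emph{every} stable map in class $\beta_1C_1'$ factors through the zero section --- via $\beta_1C_1'\cdot\Delta_0=0$ and the vanishing of $H^0(C,\bar f^*\mathcal{O}_{\mathbb{P}^3}(-4))$ on genus-zero trees --- and then compare perfect obstruction theories along $\iota$ to get $[\overline{M}_{0,1}(V,\beta_1C_1')]^{vir}=e(R^1\mu_*ev_2^*\mathcal{O}_{\mathbb{P}^3}(-4))\cap[\overline{M}_{0,1}(\mathbb{P}^3,\beta_1)]^{vir}$. Both routes land on the same intermediate formula, and your treatment of the first integral ($\iota^*\Delta_1=\iota^*\Delta_0\cup\iota^*\Gamma_1=0$ by disjointness of the two sections) coincides with the paper's commutative-diagram argument. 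What localization buys is that one need not show the moduli space equals the fixed locus nor invoke functoriality of virtual classes --- that comparison is packaged inside the localization theorem; what your approach buys is the avoidance of equivariant machinery, at the cost of the obstruction-theory comparison you rightly flag as the one non-formal step, plus a couple of compressed details (e.g.\ ruling out components of $C$ mapping into the section dual to $\Delta_0$, which uses that a non-constant degree-$d$ curve there has class $d(C_1'+4C_0')$ and so meets $\Delta_0$ positively, together with a connectedness argument at the nodes).
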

\begin{proof}
Let us consider the $\mathbb{C}^*$-action in the fiber direction
of $V$ and do the 
localization calculation \cite{GraPa}.
The fixed point loci is
$$
\overline{M}_{0,1}(V,\beta_1 C_1')^{\mathbb{C}^*}=
\iota (\overline{M}_{0,1}(\mathbb{P}^3,\beta_1\mathbb{P}^1))~.
$$
Here we use $\iota$ also as the map
$\overline{M}_{0,1}(\mathbb{P}^3,\beta_1\mathbb{P}^1)\to
\overline{M}_{0,1}(V,\beta_1 C_1')
$
induced from the inclusion $\iota:\mathbb{P}^3\to V$
as the zero section of $\mathcal{O}(-4)$.
Therefore we have
\begin{equation}\nonumber
\int_{[\overline{M}_{0,1}(V,\beta_1 C_1']^{vir}}ev^*_1\Delta_1
=
\int_{[\overline{M}_{0,1}(\mathbb{P}^3,\beta_1\mathbb{P}^1)]^{vir}}
\iota^*(ev_1^*\Delta_1)\cup e(R^1 \mu_*ev_2^*\mathcal{O}_{\mathbb{P}^3}(-4))
~.
\end{equation}
Here $e(R^1 \mu_*ev_2^*\mathcal{O}_{\mathbb{P}^3}(-4))$
is the contribution of the normal bundle of the fixed loci
(see \cite[Proposition 2.2]{KonishiMinabe}).
Since the commutativity of the diagram
$$
\begin{CD}
\overline{M}_{0,1}(\mathbb{P}^3,\beta_1\mathbb{P}^1)@>\iota>>\overline{M}_{0,1}(V,\beta_1C_1)\\
@Vev_1VV @Vev_1VV\\
\mathbb{P}^3@>\iota>>V
\end{CD}
$$
implies $\iota^*ev_1^*\Delta_1=ev^*_1 \iota^*\Delta_1=0$,
we obtain the first statement.
The proof of the second statement is similar.
\end{proof}

Let us consider the specialization $q_0=0$, $q_1=1$.
Then by Lemma \ref{lemP3},
the quantum cup product reduces to the following:
\begin{equation}\label{quantum-cup-V2}
\begin{split}
\Gamma_k\circ\Gamma_l&=\Gamma_{k+l}
+\sum_{j=1}^{2}~\frac{\partial^3 \Phi_{qu}}{\partial t^k \partial t^l\partial t^j}~(\Gamma_{j})^{\vee}~,
\\
\Delta_i\,\circ ~~&=\Delta_i\,\cup~~~.
\end{split}
\end{equation}

\begin{lemma}\label{P3-V}
The multiplication \eqref{quantum-cup-V2},
the intersection form \eqref{intersection-V2} 
(regarded as the multiplication on $T_t\tilde{M}$
and the metric by the canonical isomorphism $H^*(V)\stackrel{\sim}{\rightarrow}T_t\tilde{M}$) and 
the vector field
\begin{equation}\label{Euler-P3V}
E=t^0\frac{\partial}{\partial t^0}
-\sum_{k=2}^3 \,(k-1)\Big(t^k\frac{\partial}{\partial t^k}
 +s^{k-1}\frac{\partial}{\partial s^{k-1}}\Big)
-3s^3\frac{\partial}{\partial s^3}
+2\frac{\partial}{\partial s^0}~
\end{equation}
form a Frobenius structure of charge four on $\tilde{M}=H^{*}(V,\mathbb{C})$.
\end{lemma}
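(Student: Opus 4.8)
The plan is to mirror the proof of Lemma \ref{FS-V}, deducing everything from the Frobenius structure on the full quantum cohomology of $V$ together with the explicit shape of the potential computed in Lemma \ref{lemP3}.

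First I would invoke Theorem \ref{QH-Frob-str}: the quantum cohomology of $V$, as a function of the coordinates $t^i,s^i$ and the Novikov parameters $q_0,q_1$, carries a Frobenius structure of charge $\dim V=4$, whose metric is the constant intersection form \eqref{intersection-V2} and whose connection is the trivial one. By Iritani's convergence theorem this product is a genuine convergent power series, in particular analytic in $e^{s^0}q_0$, so the specialization $q_0=0$, $q_1=1$ is legitimate. By Lemma \ref{lemP3} the specialized product is exactly the reduced product \eqref{quantum-cup-V2}, namely the term of order zero in $e^{s^0}q_0$, with the $\mathcal{O}(q_0)$-corrections discarded.

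Next I would observe that commutativity, associativity, the compatibility \eqref{E1} of the Euler field with the product, and the symmetry \eqref{c1} of $\nabla c$ all hold for the full quantum cohomology as identities in every variable, including $e^{s^0}q_0$; extracting the order-zero part in $e^{s^0}q_0$ (equivalently, setting $q_0=0$) therefore preserves each of them for \eqref{quantum-cup-V2}. The metric, the trivial connection and the unit $\Gamma_0$ are independent of $q_0$, so flatness, the $\nabla$-flatness of the unit, and the pairing condition \eqref{E3} are inherited verbatim from the full structure.

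The one point requiring an explicit check is that \eqref{Euler-P3V} really is the Euler vector field \eqref{QH-Euler} of the quantum cohomology of $V$. Here the linear part is governed by $-c_1(K_V)=2\Delta_0$, producing the summand $2\,\partial/\partial s^0$, while the diagonal part assigns to each coordinate the weight $(2-\deg)/2$ read off from the basis \eqref{basisV2}; a short computation of these weights reproduces \eqref{Euler-P3V}. I expect this degree bookkeeping to be the only real obstacle — not a deep one, but the place where a sign or weight error would enter — since everything else is a formal consequence of the $q_0\to 0$ specialization of the already-established Frobenius structure on $H^*(V,\mathbb{C})$, exactly as in Lemma \ref{FS-V}.
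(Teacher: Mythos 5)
Your proposal is correct and follows essentially the same route as the paper: the paper omits the proof of Lemma \ref{P3-V}, stating only that it is "similar to that of Lemma \ref{FS-V}," and that proof proceeds exactly as you describe — invoke the Frobenius structure on the full quantum cohomology of $V$ with its convergence (Iritani), identify \eqref{quantum-cup-V2} as the degree-zero part in $e^{s^0}q_0$ so that associativity, commutativity, \eqref{E1} and \eqref{c1} are inherited, and read off \eqref{Euler-P3V} from \eqref{QH-Euler} using $-c_1(K_V)=2\Delta_0$ and the degrees of the basis \eqref{basisV2}. Your degree bookkeeping for the Euler field is also correct.
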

The proof of the lemma is similar to that of Lemma \ref{FS-V} and omitted.
%%%%%%%%%%%%%%%%%%%%%%%%%%%%%%%%%%%%%%%%%%%%%%%
%\subsection{Proof of Proposition \ref{prop-P3}}
%\label{proof-prop-P3}
%%%%%%%%%%%%%%%%%%%%%%%%%%%%%%%%%%%%%%%%%%%%%%%
%%%%%%%%%%%%%%%%%%%%%%%%%%%%%%%%%%%%%%%%%%%%%%%%%
\subsection{Proof of Theorem \ref{prop-P3-2}}
\label{proof-prop-P3-2}
%%%%%%%%%%%%%%%%%%%%%%%%%%%%%%%%%%%%%%%%%%%%%%%%%

We first apply 
Theorem \ref{thm:nilpotent2}
to the Frobenius structure on $\tilde{M}=H^*(V,\mathbb{C})$ in Lemma \ref{P3-V}
with the nilpotent vector field $\frac{\partial}{\partial s^0}$.

%$H^*(\mathbb{P}^3,\mathbb{C})$ equipped with 
%the quantum cup product \eqref{quantum-cup-V2} and the intersection form \eqref%{intersection-V2} 
%is a Frobenius algebra.
%Now we apply the quotient construction in \S \ref{quotient-construction}
%with $n=\Delta_0$.

Let us construct the Frobenius filtration. 
Using the canonical isomorphism $T_t\tilde{M}\cong H^*(V,\mathbb{C})$,
we write it down as that of $H^*(V,\mathbb{C})$.
Let $I$ be the ideal generated by $\Delta_0$:
$$
I=\mathbb{C}\Delta_0\oplus\mathbb{C}\Delta_1\oplus
\mathbb{C}\Delta_2\oplus \mathbb{C}\Delta_3~\subset~H^*(V).
$$
$J_k=\mathrm{Ker}\,(\Delta_0^k\,\cup)$ are as follows.
\begin{equation}
\begin{split}
J_1&=
\mathbb{C}\Gamma_3^{\vee}\oplus \mathbb{C}\Gamma_2^{\vee}\oplus
\mathbb{C}\Gamma_1^{\vee}
\oplus \mathbb{C}\Delta_3~,
\\
J_2&=J_1+ (\mathbb{C}\Delta_{2}\oplus \mathbb{C}\Gamma_3)
=\mathbb{C}\Gamma_3^{\vee}\oplus \mathbb{C}\Gamma_2^{\vee}
\oplus H^{\geq 6}(V)~,
\\
J_3&=\mathbb{C}\Gamma_{3}^{\vee}\oplus H^{\geq 4}(V)~,
\\
J_4&=H^{\geq 2}(V)~,
\\
J_5&=H^*(V)~.
\end{split}
\end{equation}
Therefore the filtration $I_{\bullet}$ defined in \eqref{nilp1} on 
$H^*(V,\mathbb{C})$ is given by 
\begin{equation}\label{filtration-V2}
I_0=I~,\quad
I_k=H^{\geq 2 }(V)~\quad (1\leq k\leq 4)~,\quad 
I_{5}=H^*(V)~.
\end{equation}
The bilinear forms on $I_k/I_{k-1}$
(see Definition \eqref{nilp2}) are
\begin{equation}\label{bilinear-V2}
\begin{split}
k=0\quad&([\Delta_k],[\Delta_l])_0=
\langle \Delta_k,\Gamma_l\rangle=\delta_{k+l,3}~,
\\
k=1\quad &([\Gamma_k],[\Gamma_l])_1=\Big\langle 
\Gamma_k-\frac{1}{4}\Delta_{k-1},
\Gamma_l-\frac{1}{4}\Delta_{l-1}
\Big\rangle
\\
&\quad\quad\quad\quad
=\begin{cases}
-\frac{1}{4}&(k+l=4)\\
0&(k+l\neq 4)
\end{cases},
\\
k=5\quad& (1,1)_{5}=\langle 1 ,\Delta_0^{4}\rangle=4^3~.
\end{split}
\end{equation}
%By Proposition \ref{prop-nilp}, these form a Frobenius filtration 
%on $H^*(V,\mathbb{C})$.

By Theorem \ref{thm:nilpotent2}, we have the following 
\begin{lemma}
The trivial connection, the vector field $E$ \eqref{Euler-P3V},
the multiplication \eqref{quantum-cup-V2},
the filtration \eqref{filtration-V2} and the bilinear forms \eqref{bilinear-V2}
form a MFS of reference charge five on $\tilde{M}=H^*(V,\mathbb{C})$~.

\end{lemma}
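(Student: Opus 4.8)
The plan is to deduce this Lemma directly from Theorem~\ref{thm:nilpotent2}, applied to the Frobenius structure of charge $D=4$ established in Lemma~\ref{P3-V}, using the nilpotent vector field $n=\frac{\partial}{\partial s^0}$. Under the canonical isomorphism $T_t\tilde{M}\cong H^*(V,\mathbb{C})$ this vector field corresponds to the class $\Delta_0$. Since Theorem~\ref{thm:nilpotent2} produces a mixed Frobenius structure of reference charge $D+1$, and $D+1=5$, the reference-charge assertion is immediate once the three hypotheses on $n$ are verified.

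First I would check the hypotheses of Theorem~\ref{thm:nilpotent2}. For nilpotency, I note that by \eqref{quantum-cup-V2} the operator $\Delta_0\circ_t$ equals the cup product $\Delta_0\cup$, so it is independent of $t$; using \eqref{intersectionV2} one computes $\Delta_0^2=4\Delta_1$, $\Delta_0^3=16\Delta_2$, $\Delta_0^4=64\Delta_3\neq 0$ and $\Delta_0^5=0$, so $n$ is nilpotent of order $d=5$. For \eqref{condition-E-n}, I observe from \eqref{Euler-P3V} that the coefficient of $\frac{\partial}{\partial s^0}$ in $E$ is the constant $2$ and that no coefficient of $E$ depends on $s^0$; since $\frac{\partial}{\partial s^0}$ is a flat (constant) vector field, the bracket $[E,\frac{\partial}{\partial s^0}]$ vanishes. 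For \eqref{condition-nabla-n}, because $\Delta_0\circ_t=\Delta_0\cup$ is a constant endomorphism of $H^*(V,\mathbb{C})$ and $\nabla$ is the trivial connection, the matrix of $n\circ$ commutes with covariant differentiation, so $\nabla(n\circ x)=n\circ\nabla x$.

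Having checked the hypotheses, Theorem~\ref{thm:nilpotent2} yields a mixed Frobenius structure on $\tilde{M}$ of reference charge $5$ whose filtration and pairings are precisely those of \eqref{nilp1} and Definition~\ref{nilp2} for the nilpotent element $n$. It then remains to identify this abstract data with the explicit \eqref{filtration-V2} and \eqref{bilinear-V2}, which is exactly the content of the computations displayed just above the Lemma: I would record that $I=\mathrm{Im}(\Delta_0\cup)$ is the span of the $\Delta_i$, compute the kernels $J_k=\mathrm{Ker}(\Delta_0^k\cup)$ using the dual basis, form $I_k=I+J_k$ to obtain \eqref{filtration-V2}, and evaluate the pairings of Definition~\ref{nilp2} on the indicated representatives to obtain \eqref{bilinear-V2}.

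The main obstacle is organizational rather than conceptual: once \eqref{condition-nabla-n} and the nilpotency order are in hand, the Lemma is a formal consequence of Theorem~\ref{thm:nilpotent2}. The only step demanding care is the bookkeeping matching the abstractly defined graded pieces $I_k/I_{k-1}$ and their pairings with the explicit representatives $[\Gamma_k]$, $[\Delta_k]$ appearing in \eqref{bilinear-V2}; here the kernel computations of $\Delta_0^k\cup$ and the chosen coset representatives (for instance $\Gamma_k-\frac{1}{4}\Delta_{k-1}$ in the $k=1$ pairing) must be tracked consistently.
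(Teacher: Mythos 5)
Your proposal follows exactly the paper's route: apply Theorem~\ref{thm:nilpotent2} to the charge-four Frobenius structure of Lemma~\ref{P3-V} with $n=\frac{\partial}{\partial s^0}$ (i.e.\ $\Delta_0$, nilpotent of order $5$ since $\Delta_0^4=64\Delta_3\neq 0$ and $\Delta_0^5=0$), and then match the resulting filtration and pairings with \eqref{filtration-V2} and \eqref{bilinear-V2} via the kernel computations $J_k=\mathrm{Ker}(\Delta_0^k\cup)$. Your explicit verification of the hypotheses \eqref{condition-E-n} and \eqref{condition-nabla-n}, which the paper leaves implicit, is correct and only strengthens the argument.
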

%Notice that the quotient space $H^*(V,\mathbb{C})/I$ is 
%canonically isomorphic to
%$H^*(\mathbb{P}^3,\mathbb{C})$ since $I$ is the kernel of the pullback 
%$\iota^*$ by $\iota:\mathbb{P}^3 \hookrightarrow 
%\mathcal{O}(-4)\subset V$.
%The isomorphism is given by $\iota^*\Gamma_i=\gamma_i$ ($0\leq i\leq 3$).

%\begin{lemma}
%The quantum cup product \eqref{quantum-cup-V2} on $H^*(V,\mathbb{C})$
%induces the local quantum cup product \eqref{P3-multiplication2} on
%$H^*(\mathbb{P}^3,\mathbb{C})$.
%\end{lemma}

%\begin{proof} If we apply $\iota^*$ on the both sides of 
%\eqref{quantum-cup-V2}, we obtain \eqref{P3-multiplication2}
%since $\iota^*(\Gamma_k^{\vee})=-4\gamma_{4-k}$.
%\end{proof}

%It is clear that the Frobenius filtration \eqref{filtration-V2}, \eqref{bilinea%r-V2} on $H^*(V,\mathbb{C})$ 
%induces the filtration \eqref{P3-filtration} 
%and the bilinear forms \eqref{P3-bilinear}
%on $H^*(\mathbb{P}^3,\mathbb{C})$.
%By Corollary \ref{prop-quotient}, these form a Frobenius filtration on 
%$H^*(\mathbb{P}^3,\mathbb{C})$. 
%This finishes the proof of Proposition \ref{prop-P3}.

%%%%%%%%%%%%%%%%%%%%%%%%%%%%%%%%%%%%%%%%%%%%%%%%%
%\subsection{Proof of Theorem \ref{prop-P3-2}}
%\label{proof-prop-P3-2}
%%%%%%%%%%%%%%%%%%%%%%%%%%%%%%%%%%%%%%%%%%%%%%%%%
Next we apply Corollary \ref{quotient-construction2}.
Since 
$I$ is the kernel of 
the pullback
$\iota^*: H^*(V,\mathbb{C})\to H^*(\mathbb{P}^3,\mathbb{C})$ 
by the inclusion $\iota:\mathbb{P}^3 \hookrightarrow \mathcal{O}(-4)\subset V$,
if we set $$
\tilde{M}^{(1)}=\{s^0=s^1=s^2=s^3=0\}\subset \tilde{M}~,
$$
then it is naturally isomorphic to $H^*(\mathbb{P}^3,\mathbb{C})$.
Theorem \ref{prop-P3-2} follows from 
Corollary \ref{quotient-construction2} and Lemma \ref{P3-V}.

%By Theorem \ref{QH-Frob-str}, 
%$H^*(V,\mathbb{C})$ has the Frobenius structure of charge four
%consisting of the quantum cup product \eqref{quantum-cup-V2}, 
%the intersection form \eqref{intersection-V2} 
%(hence the trivial Levi--Civita connection),
%and the Euler vector field 
%\begin{equation}
%E=t^0\frac{\partial}{\partial t^0}
%-\sum_{k=2}^3 \,(k-1)\Big(t^k\frac{\partial}{\partial t^k}
% +s^{k-1}\frac{\partial}{\partial s^{k-1}}\Big)
%-3s^3\frac{\partial}{\partial s^3}
%+2\frac{\partial}{\partial s^0}~.
%\end{equation}
%If we consider the submanifold of $H^*(V,\mathbb{C})$ defined by
%$s^0=s^1=s^2=s^{3}=0$, 
%it is isomorphic to $H^*(\mathbb{P}^3,\mathbb{C})$.
%The induced Euler vector field on $H^*(\mathbb{P}^3,\mathbb{C})$
%is \eqref{Euler-localP3}.
%Then Theorem \ref{prop-P3-2} follows form 
%Corollary \ref{quotient-construction2}.

\appendix

%%%%%%%%%%%%%%%%%%%%%%%%%%%%%%%%
\section{Deformed connection}
\label{section:deformed-connection}
%%%%%%%%%%%%%%%%%%%%%%%%%%%%%%%%
In this appendix, we define an analogue of the deformed connection
of the Frobenius structure \cite{Dubrovin2} for the MFS. Let 
$
(\nabla,~E,~\circ,~I_{\bullet},~(~,~)_{\bullet})
$
be a MFS on $M$ of reference charge $D$ 
and let $t^{ka}$ $(k\in \mathbb{Z},~1\leq a\leq m_k)$
be a system of local flat coordinates satisfying \eqref{flat-frame}.

%%%%%%%%%%%%%%%%%%%%%%%
\subsection{Operators}
%%%%%%%%%%%%%%%%%%%%%%%
Define endomorphisms $\mathcal{U},\mathcal{V}:TM\rightarrow TM$ by
\begin{equation}
\mathcal{U}(x)=E\circ x~,\quad
\mathcal{V}(x)=\nabla_x E -\frac{2-D}{2}\,x~.
\end{equation}

\begin{lemma} If $x\in \Gamma(I_k)$, then
$\mathcal{U}(x)\, ,\, \mathcal{V}(x)\in \Gamma(I_k)$.
\end{lemma}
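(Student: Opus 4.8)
The plan is to handle the two operators $\mathcal{U}$ and $\mathcal{V}$ separately, reducing each to a closedness property that is built into the definition of a mixed Frobenius structure.

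For $\mathcal{U}$ the argument is immediate. Since each fiber $I_{k,t}$ is an ideal of the algebra $T_t M$, and $E$ is a global section of $TM$, the product $E\circ x$ lies in $\Gamma(I_k)$ whenever $x\in\Gamma(I_k)$. Hence $\mathcal{U}(x)=E\circ x\in\Gamma(I_k)$ with no further work.

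For $\mathcal{V}$ I would first observe that it suffices to show $\nabla_x E\in\Gamma(I_k)$, because the remaining summand $-\frac{2-D}{2}\,x$ clearly lies in $\Gamma(I_k)$. The subtle point is that one cannot apply the $\nabla$-closedness of $I_k$ directly to $\nabla_x E$: that property controls $\nabla_z y$ when the differentiated section $y$ lies in $I_k$, whereas here the differentiated field is $E$, which is not a section of $I_k$ in general. The key step will therefore be to invoke the torsion-freeness of $\nabla$ to exchange the two arguments,
$$
\nabla_x E=\nabla_E x+[x,E]=\nabla_E x-[E,x]~,
$$
after which both terms are under control: $\nabla_E x\in\Gamma(I_k)$ by the $\nabla$-closedness of $I_k$ (now the differentiated section is $x\in\Gamma(I_k)$), and $[E,x]\in\Gamma(I_k)$ by the $E$-closedness of $I_k$. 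Hence $\nabla_x E\in\Gamma(I_k)$ and consequently $\mathcal{V}(x)\in\Gamma(I_k)$.

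The main obstacle is precisely this last manoeuvre: recognizing that $\nabla_x E$ must be rewritten via the torsion-free identity into a combination of $\nabla_E x$ and $[E,x]$, each of which is then absorbed by one of the two closedness hypotheses. Everything else is routine.
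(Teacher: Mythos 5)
Your proposal is correct and follows essentially the same route as the paper: $\mathcal{U}(x)\in\Gamma(I_k)$ because $I_k$ is an ideal, and $\nabla_x E$ is rewritten via torsion-freeness as a combination of $\nabla_E x$ and $[E,x]$, which are handled by $\nabla$-closedness and $E$-closedness respectively. (Your sign in the torsion-free identity is the standard one; the paper writes $\nabla_x E=[E,x]-\nabla_E x$, but the discrepancy is immaterial since both terms lie in $\Gamma(I_k)$ either way.)
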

\begin{proof} If $x\in \Gamma(I_k)$,
$U(x)=E\circ x\in \Gamma(I_k)$ 
since $I_k$ is an ideal.

If $x\in \Gamma(I_k)$, we have
$$
\nabla_x E=[E,x]-\nabla_E\, x~\in\Gamma(I_k) ~,
$$
by the torsion free condition for $\nabla$ and 
the assumptions that $I_k$ is $E$-closed and $\nabla$-closed.
\end{proof}

The above lemma implies that $\mathcal{U}$, $\mathcal{V}$
induce endomorphisms $\mathcal{U}^{(k)}$, $\mathcal{V}^{(k)}$
on $I_k/I_{k-1}$.
In the local flat coordinate expression,
\begin{equation}\nonumber
\begin{split}
\mathcal{U}(\partial_{ka})
&=\sum_{\begin{subarray}{c}l\in \mathbb{Z},\\1\leq b\leq m_l\end{subarray}}
\sum_{\begin{subarray}{c}j\leq k,l,\\1\leq c\leq m_j\end{subarray}}
E^{lb}C_{ka,lb}^{jc} \partial_{jc}~,
\\
\mathcal{U}^{(k)}(\partial_{ka})
&=\sum_{\begin{subarray}{c}l \geq k,\\1\leq b\leq m_l\end{subarray}}
\sum_{1\leq c\leq m_k}
E^{lb}C_{ka,lb}^{kc} \partial_{kc}~,
\\
\mathcal{V}(\partial_{ka})&=
\sum_{\begin{subarray}{c}l\leq k ,\\1\leq b\leq m_l\end{subarray}}
(\partial_{ka}E^{lb})\partial_{lb}-\frac{2-D}{2}\partial_{ka}~,
\\
\mathcal{V}^{(k)}(\partial_{ka})&=
\sum_{1\leq b\leq m_k}
(\partial_{ka}E^{kb})\partial_{kb}-\frac{2-D}{2}\partial_{ka}~.
\end{split}
\end{equation}

\begin{remark}
The assumption $\nabla\nabla E=0$ implies $\nabla \mathcal{V}=0$.
In other words, the matrix representations of $\mathcal{V}$ and $\mathcal{V}^{(k)}$
with respect to the flat basis $\{\partial_{ka}\}$ are constant matrices.
Notice also that 
the condition
 \eqref{E-metric} is equivalent to 
\begin{equation}
( \mathcal{V}^{(k)}(x),y)_k+
( x,\mathcal{V}^{(k)}(y))_k=k( x,y)_k~.
\end{equation}
\end{remark}

%%%%%%%%%%%%%%%%%%%%%%%%%%%%%%%%
\subsection{Deformed connection}
%%%%%%%%%%%%%%%%%%%%%%%%%%%%%%%%
Let $\Tilde{M}=M\times \mathbb{C}^*$ and
let $\hbar$ be the coordinate of $\mathbb{C}^*$.
For a holomorphic vector bundle $\mathbb{E}\to \Tilde{M}$,
$\Tilde{\Gamma}(\mathbb{E})$ denotes 
the space of holomorphic sections of $\mathbb{E}$ on some open subset 
$\Tilde{U}\subset \Tilde{M}$.

Recall that 
$\pi_k:TM\to TM/I_{k-1}$ is the projection
and that $\nabla^{(k)}$ and $\circ_k$ 
are the connection and the multiplication on $TM/I_{k-1}$
induced from the connection $\nabla$ and the multiplication $\circ$~. 

\begin{definition}
Define a connection 
$\Tilde{\nabla}^{(k)}$ on $I_k/I_{k-1}\times T\mathbb{C}^*
\rightarrow \tilde{M}$ by
\begin{equation}
\begin{split}
&\Tilde{\nabla}^{(k)}_x y=\nabla^{(k)}_x\,y+\hbar\,\, \pi_k(x)\circ_k y
\quad (x\in\Tilde{\Gamma}(TM),~y\in \Tilde{\Gamma}(I_k/I_{k-1}))~,
\\
&\Tilde{\nabla}^{(k)}_{\hbar} y=\partial_{\hbar}\, y
+\mathcal{U}^{(k)}(y)+\frac{1}{\hbar}\,(\mathcal{V}^{(k)}(y)-\frac{k}{2}y\,)~,
\\
&\Tilde{\nabla}^{(k)}_x (\partial_{\hbar})=
\Tilde{\nabla}^{(k)}_{\hbar}(\partial_{\hbar})=0~.
\end{split}
\end{equation}
\end{definition}

We write $\Tilde{\nabla}_{la}^{(k)}$ 
for $\Tilde{\nabla}_{x}^{(k)}$ with $x=\partial_{la}$.
In the local flat coordinate expression,
\begin{equation}\label{deformed-conn1}
\begin{split}
\Tilde{\nabla}_{lb}^{(k)} (\partial_{ka})&=
\hbar\sum_{1\leq c\leq m_k}C_{ka,lb}^{kc} \,\partial_{kc}
\quad (\text{this is zero if $l<k$ by \eqref{c-symmetry2}})~,
\\
\Tilde{\nabla}^{(k)}_{\hbar}(\partial_{ka})
&=\sum_{\begin{subarray}{c}l\geq k,\\
1\leq b\leq m_l\end{subarray}}
\sum_{1\leq c\leq m_k}
(E^{lb}C_{ka,lb}^{kc})\partial_{kc}
\\
&+\frac{1}{\hbar}\,
\Big(\sum_{1\leq b\leq m_k}
(\partial_{ka}E^{kb})\partial_{kc}
-\frac{2-D+k}{2}\partial_{ka}\Big)~.
\end{split}
\end{equation}

\begin{proposition}
The deformed connection $\Tilde{\nabla}^{(k)}$ is flat.
\end{proposition}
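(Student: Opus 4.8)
The plan is to compute the curvature of $\tilde\nabla^{(k)}$ and show it vanishes. Working in the flat coordinates $t^{ka}$ of \eqref{flat-frame}, the coordinate vector fields together with $\partial_\hbar$ all commute, so the curvature is measured by the operators $[\tilde\nabla^{(k)}_{la},\tilde\nabla^{(k)}_{mb}]$ and $[\tilde\nabla^{(k)}_{la},\tilde\nabla^{(k)}_{\hbar}]$ acting on sections of $I_k/I_{k-1}$; the $\partial_\hbar$--$\partial_\hbar$ component is automatically zero. Throughout, every operation involved --- $\nabla^{(k)}$, $\circ_k$, $\mathcal{U}^{(k)}$ and $\mathcal{V}^{(k)}$ --- is well defined on the quotient bundle because $I_{k-1}$ is an ideal and is both $\nabla$-closed and $E$-closed, so the entire computation may be carried out in $I_k/I_{k-1}$. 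It is convenient to write $\tilde\nabla^{(k)}_{la}=\nabla^{(k)}_{la}+\hbar\,A_{la}$, where $A_{la}=\pi_k(\partial_{la})\circ_k$ is the multiplication operator whose matrix on the flat frame is $C^{kc}_{la,kb}$.

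For the first bracket $[\tilde\nabla^{(k)}_{la},\tilde\nabla^{(k)}_{mb}]$ I would expand and collect by powers of $\hbar$. The $\hbar^0$ term is the curvature of $\nabla^{(k)}$, which vanishes since $\nabla$ is flat and $I_{k-1}$ is $\nabla$-closed, so the induced quotient connection is flat. The $\hbar^1$ term is $[\nabla^{(k)}_{la},A_{mb}]-[\nabla^{(k)}_{mb},A_{la}]$; applied to a flat frame vector $\partial_{kc}$ this equals $\sum_d(\partial_{la}C^{kd}_{mb,kc}-\partial_{mb}C^{kd}_{la,kc})\,\partial_{kd}$, which vanishes precisely by the symmetry \eqref{c-symmetry2} of the structure constants (equivalently by \eqref{c-symmetry}). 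The $\hbar^2$ term is $[A_{la},A_{mb}]$, which vanishes because $\circ_k$ is associative and commutative, so the multiplication operators commute. Hence the horizontal-horizontal curvature is zero.

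The mixed bracket $[\tilde\nabla^{(k)}_{la},\tilde\nabla^{(k)}_{\hbar}]$ is the heart of the matter. Expanding $\tilde\nabla^{(k)}_{\hbar}=\partial_\hbar+\mathcal{U}^{(k)}+\tfrac1\hbar(\mathcal{V}^{(k)}-\tfrac k2)$ and using \eqref{deformed-conn1}, I would organize the result in powers of $\hbar$. The coefficient of $\hbar^{-1}$ is $[\nabla^{(k)}_{la},\mathcal{V}^{(k)}]$, which vanishes because $\nabla\nabla E=0$ forces $\nabla\mathcal{V}=0$ (see the Remark), so $\mathcal{V}^{(k)}$ has constant matrix in the flat frame. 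The terms linear in $\hbar$ cancel since $[\nabla^{(k)}_{la},\partial_\hbar]=0$ and $[A_{la},\mathcal{U}^{(k)}]=0$ (again commuting multiplication operators, as $\mathcal{U}^{(k)}$ is multiplication by the class of $E$). The remaining $\hbar^0$ part is $[\nabla^{(k)}_{la},\mathcal{U}^{(k)}]+[A_{la},\mathcal{V}^{(k)}]-A_{la}$, where the stray $-A_{la}$ arises from $[\hbar A_{la},\partial_\hbar]=-A_{la}$. I expect the main work --- and the one place where signs and index bookkeeping must be handled carefully --- to be checking that this $\hbar^0$ operator is exactly the Euler-field compatibility \eqref{E-multiplication2} (equivalently \eqref{E-multiplication}), and hence vanishes. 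Once these brackets are shown to vanish, $\tilde\nabla^{(k)}$ is flat.
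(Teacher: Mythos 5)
Your proposal is correct and follows essentially the same route as the paper: compute the curvature in the flat coordinates and reduce each component to the flatness of $\nabla$, the symmetry \eqref{c-symmetry2}, associativity/commutativity of $\circ_k$, $\nabla\mathcal{V}=0$, and the Euler-field identity \eqref{E-multiplication2} (your $\hbar^0$ operator $[\nabla^{(k)}_{la},\mathcal{U}^{(k)}]+[A_{la},\mathcal{V}^{(k)}]-A_{la}$ does indeed match \eqref{E-multiplication2} after relabelling indices, and the case $l<k$ is absorbed automatically since $A_{la}=0$ there). The only difference is bookkeeping --- you organize by powers of $\hbar$ and operator brackets where the paper expands indices case by case --- so no substantive comparison is needed.
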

\begin{proof}
Let 
$
\tilde{\Omega}^{(k)}$ 
be the curvature of $\Tilde{\nabla}^{(k)}$.
We first show that $\Tilde{\Omega}^{(k)}(\partial_{la},\partial_{jb})=
\frac{1}{2}(\Tilde{\nabla}^{(k)}_{la}\Tilde{\nabla}^{(k)}_{jb}
-\Tilde{\nabla}^{(k)}_{jb}\Tilde{\nabla}^{(k)}_{la})
=0$. 
By the first equation in \eqref{deformed-conn1},
it is immediate to check that
$\Tilde{\Omega}^{(k)}(\partial_{la},\partial_{jb})(\partial_{kc})=0
$ if $l,j<k$. If $l<k$ and $j\geq k$,
\begin{equation}\nonumber
\Tilde{\Omega}^{(k)}(\partial_{la},\partial_{jb})(\partial_{kc})=
\Tilde{\nabla}_{la}^{(k)}\Big(\sum_{1\leq d\leq m_k} 
C_{jb,kc}^{kd}\partial_{kd}\Big)
=\sum_{1\leq d\leq m_k} 
(\partial_{la}C_{jb,kc}^{kd}) \partial_{kd}
  \stackrel{\eqref{c-symmetry2}}{=}0~.
\end{equation}
If $l,j\geq k$,
\begin{equation}\nonumber
\begin{split}
\Tilde{\Omega}^{(k)}(\partial_{la},\partial_{jb})(\partial_{kc})
&=\Tilde{\nabla}_{la}^{(k)}\Big(\sum_{1\leq d\leq m_k} 
C_{jb,kc}^{kd}\partial_{kd}\Big)
-(la~\leftrightarrow~jb)
\\
&=\sum_{1\leq d\leq m_k}
(\partial_{la}C_{jb,kc}^{kd}-\partial_{jb}C_{la,kc}^{kd}) \partial_{kd}
\\&+
\hbar \sum_{1\leq d,f\leq m_k}(C_{jb,kc}^{kd}C_{la,kd}^{kf}-
C_{la,kc}^{kd}C_{jb,kd}^{kf})\,\partial_{kf}
=0~,
\end{split}
\end{equation}
by \eqref{c-symmetry2} and the associativity.

Next we show $\Tilde{\Omega}^{(k)}(\partial_{la},\partial_{\hbar})=
\frac{1}{2}(\Tilde{\nabla}^{(k)}_{la}\Tilde{\nabla}^{(k)}_{\hbar}
-\Tilde{\nabla}^{(k)}_{\hbar}\Tilde{\nabla}^{(k)}_{la})
=0$.
By the second equation in \eqref{deformed-conn1},
\eqref{E-linear} and \eqref{c-symmetry2},
if $l<k,$ we have
\begin{equation}\nonumber
\begin{split}
\Tilde{\Omega}^{(k)}(\partial_{la},\partial_{\hbar})(\partial_{kc})
&=\sum_{\begin{subarray}{c}j\geq k,\\
1\leq b\leq m_j\end{subarray}}
\sum_{1\leq d\leq m_k}
\big(\partial_{la}(E^{jb}C_{kc,jb}^{kd})\big)\partial_{kd}
=0~. 
\end{split}
\end{equation}
If $l\geq k$, we have
\begin{equation}\nonumber
\begin{split}
\Tilde{\nabla}^{(k)}_{la}\Tilde{\nabla}^{(k)}_{\hbar}
(\partial_{kc})
&=
\sum_{1\leq d\leq m_k}
\Big(\sum_{\begin{subarray}{c}j\geq k,\\
1\leq b\leq m_j\end{subarray}}
\partial_{la}(E^{jb}C_{kc,jb}^{kd})
+\sum_{1\leq b\leq m_k}
C_{la,kb}^{kd}\partial_{kc}E^{kb}-\frac{2-D+k}{2}C_{la,kc}^{kd}
\Big)\,
\partial_{kd}\\
&+\hbar\,
\sum_{1\leq f\leq m_k}
\Big(\sum_{1\leq d\leq m_k}
\sum_{\begin{subarray}{c}j\geq k,\\
1\leq b\leq m_j\end{subarray}}
E^{jb}C_{kc,jd}^{kd}C_{la,kd}^{kf}\Big) \,\partial_{kf}
~,
\\
\Tilde{\nabla}^{(k)}_{\hbar}\Tilde{\nabla}^{(k)}_{la}
(\partial_{kc})
&=\sum_{1\leq d\leq m_k}
\Big(C_{la,kc}^{kd}
+\sum_{1\leq b\leq m_k}C_{la,kc}^{kb}\partial_{kb}E^{kd}
 -\frac{2-D+k}{2}C_{la,kc}^{kd}
\Big)\,\partial_{kd}
\\
&+\hbar\,\sum_{1\leq f\leq m_k} 
\Big( 
\sum_{1\leq d\leq m_k}
\sum_{\begin{subarray}{c}j\geq k,\\
1\leq b\leq m_j\end{subarray}}
E^{jb} C_{la,kc}^{kd}C_{kd,jb}^{kf} 
\Big)\,\partial_{kf} ~.
\end{split}
\end{equation}
Thus by \eqref{E-multiplication2} and the associativity,  we obtain
\begin{equation}\nonumber
\begin{split}
\Tilde{\Omega}^{(k)}(\partial_{la},\partial_{\hbar})(\partial_{kc})
&=\Tilde{\nabla}^{(k)}_{la}\Tilde{\nabla}^{(k)}_{\hbar}
(\partial_{kc})
-\Tilde{\nabla}^{(k)}_{\hbar}\Tilde{\nabla}^{(k)}_{la}
(\partial_{kc})=0~.
\end{split}
\end{equation}
\end{proof}

%%%%%%%%%%%%%%%%%%%%%%%%%%%%%%%%%%%%%%
\subsection{Deformed flat coordinates}
%%%%%%%%%%%%%%%%%%%%%%%%%%%%%%%%%%%%%%
\begin{proposition}
There exist (local) holomorphic functions
$\tilde{t}^{ka}(t,\hbar) $ $(k\in \mathbb{Z}$, $1\leq a\leq m_k)$ 
on $\tilde{M}$
such that $\hbar$, $\tilde{t}^{ka}(t,\hbar)$ are
a system of local coordinates on $\tilde{M}$
satisfying the following conditions:
\begin{equation}\nonumber
\left \{
\frac{\partial}{\partial \tilde{t}^{la}} ~{\Big |}~ l\leq k, 1\leq a\leq m_l
\right\}\quad
\text{is a local frame of $I_k$,}
\end{equation}
\begin{equation}
\tilde{\nabla}^{(k)}\pi_k\Big(\frac{\partial}{\partial \tilde{t}^{ka}}\Big)
=0~.
\end{equation}
\end{proposition}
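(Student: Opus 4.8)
The plan is to construct the coordinate functions $\tilde{t}^{ka}$ out of the flat sections of the connections $\tilde{\nabla}^{(k)}$, whose flatness has just been established, following the construction of deformed flat coordinates for Frobenius manifolds \cite{Dubrovin2} but carried out on each graded piece and threaded through the filtration. Since the filtration is exhaustive there are only finitely many nonzero graded pieces, and I would argue by induction on the level $k$, building the coordinate change $t\mapsto\tilde{t}$ as a block-triangular transformation: the frame requirement \eqref{flat-frame} for the $\tilde{t}$-coordinates forces $\tilde{t}^{ka}$ to be a function of the $t^{jb}$ with $j\ge k$ only, so I would start from the top of the filtration and descend. Throughout I keep $\hbar$ as one of the coordinates and look for the $\tilde{t}^{ka}$ as holomorphic deformations of the flat coordinates $t^{ka}$.

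First I would produce, for each fixed $k$, a $\tilde{\nabla}^{(k)}$-flat frame $\sigma^{ka}$ $(1\le a\le m_k)$ of $I_k/I_{k-1}$. Because $\tilde{\nabla}^{(k)}$ is flat, the sheaf of its flat sections is a local system of rank $m_k$; concretely, writing a section in the flat frame $\{\pi_k(\partial_{kc})\}$ turns the equations $\tilde{\nabla}^{(k)}\sigma=0$ read off from \eqref{deformed-conn1} into a linear first-order system in the $t$-directions together with the $\hbar$-equation, and this system is integrable by the flatness of $\tilde{\nabla}^{(k)}$ just proved, so an invertible fundamental solution $\Xi^{kc}_{ka}(t,\hbar)$ exists and I set $\sigma^{ka}=\sum_c\Xi^{kc}_{ka}\,\pi_k(\partial_{kc})$.

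The heart of the argument is to realize these flat sections as $\pi_k(\partial/\partial\tilde{t}^{ka})$ for honest functions. By lowering the index with the metric $(~,~)_k$ and using its flatness \eqref{eta-const}, I would convert $\sigma^{ka}$ into a one-form defined along $I_k$ whose leading ($j=k$) block is the metric dual of $\sigma^{ka}$; the symmetry \eqref{c-symmetry} of $c_k$ (equivalently \eqref{c-symmetry2}) is exactly the integrability that, in Dubrovin's argument, makes horizontal one-forms closed. I would then extend this to a genuine one-form $\theta^{ka}=\sum_{j\ge k,\,b}\Theta^{jb}_{ka}\,dt^{jb}$ on $M$, determining the higher-level corrections $\Theta^{jb}_{ka}$ $(j>k)$ from the requirement $d\theta^{ka}=0$, where closedness in the remaining $t$-directions uses \eqref{c-symmetry2} and associativity while closedness involving $\hbar$ uses the Euler-field conditions \eqref{E-linear} and \eqref{E-multiplication2}; the Poincar\'e lemma then yields $\tilde{t}^{ka}$ with $d\tilde{t}^{ka}=\theta^{ka}$. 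Finally I would check at a base point that the resulting Jacobian is invertible and block triangular, so that $\hbar,\tilde{t}^{ka}$ form a coordinate system satisfying \eqref{flat-frame}, and transport the horizontality of $\theta^{ka}$ back through the metric to conclude $\tilde{\nabla}^{(k)}\pi_k(\partial/\partial\tilde{t}^{ka})=0$.

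I expect the main obstacle to be precisely this last realization step. Flatness of $\tilde{\nabla}^{(k)}$ only guarantees that flat sections of the graded piece exist; because the pairing $(~,~)_k$ lives only on $I_k/I_{k-1}$, the metric produces a one-form along $I_k$ but gives no control in the complementary directions, and upgrading it to a closed one-form on all of $M$ -- that is, solving for the higher-level corrections consistently for every level -- is where the full strength of the vanishing curvature (including the mixed $t$-$\hbar$ components) must be used. Verifying that these corrections are compatible with the block-triangular frame condition \eqref{flat-frame}, so that a single coordinate system serves all graded pieces at once, is the delicate bookkeeping that the downward induction is designed to handle.
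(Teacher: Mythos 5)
Your overall strategy is the same as the paper's: work level by level, produce flat sections of the dual of $I_k/I_{k-1}$, observe that their components in the directions $t^{la}$, $l\leq k$, satisfy exactly the integrability conditions $\partial_{kb}\xi_{ka}=\partial_{ka}\xi_{kb}$ and $\partial_{lb}\xi_{ka}=0$ $(l<k)$, and integrate to get block-triangular coordinates. The paper does this by putting the induced dual connection directly on $\mathrm{Ann}_{k-1}/\mathrm{Ann}_k\cong(I_k/I_{k-1})^{\vee}$ and solving \eqref{flat-dual}; no metric is used.

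The one step of yours that does not go through as written is the passage from a $\tilde{\nabla}^{(k)}$-flat frame $\sigma^{ka}$ of $I_k/I_{k-1}$ to flat covectors by ``lowering the index with $(~,~)_k$''. The deformed connection is not compatible with $(~,~)_k$: using \eqref{eta-const} and the Frobenius property \eqref{I1} one finds, for $\xi=(\sigma,-)_k$,
\begin{equation}\nonumber
x\bigl(\xi(y)\bigr)-\xi\bigl(\nabla^{(k)}_x y+\hbar\,\pi_k(x)\circ_k y\bigr)
=\bigl(\nabla^{(k)}_x\sigma-\hbar\,\pi_k(x)\circ_k\sigma,\;y\bigr)_k~,
\end{equation}
so the metric dual of a $\tilde{\nabla}^{(k)}(\hbar)$-flat vector field is flat for the dual connection at parameter $-\hbar$, not $+\hbar$; equivalently, $\bigl(\sigma^{ka},\sigma^{kb}\bigr)_k$ is not constant, so the frame dual to $\{(\sigma^{kb},-)_k\}$ is a non-constant combination of the $\sigma^{kb}$ and is not $\tilde{\nabla}^{(k)}$-flat. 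Your construction therefore yields coordinates satisfying $\tilde{\nabla}^{(k)}|_{\hbar\to-\hbar}\,\pi_k(\partial/\partial\tilde{t}^{ka})=0$. This is fixable (start from flat sections of the dual connection, as the paper does, or substitute $\hbar\mapsto-\hbar$ at the end), but it should be flagged. Separately, the ``main obstacle'' you anticipate --- extending to a closed one-form on all of $M$ and checking consistency of the higher-level corrections --- is not actually there: once the level-$k$ symmetry and the vanishing of the derivatives in directions $l<k$ are established, one simply integrates in the variables $t^{ka}$ alone, treating $t^{lb}$ $(l>k)$ and $\hbar$ as parameters; the components of $d\tilde{t}^{ka}$ in the remaining directions are then whatever they turn out to be and need not be controlled, since only the class of $d\tilde{t}^{ka}$ modulo $\mathrm{Ann}_k$ and $d\hbar$ enters the conclusion. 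No use of associativity, \eqref{E-linear} or \eqref{E-multiplication2} is needed at this stage beyond what already went into the flatness of $\tilde{\nabla}^{(k)}$.
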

We call $\tilde{t}^{ka}(t,\hbar)$ deformed flat coordinates.

\begin{proof}
Let $\mathrm{Ann}_k\subset T^*M$ be the annihilator of $I_k$:
$$
\mathrm{Ann}_k:=\{x\in T^*M\mid x(y)=0~,
~~{}^{\forall}y\in I_k \}~.
$$
Its local frame is given by
$
dt^{la}~ (l>k).
$
Notice that
the dual bundle of $I_k/I_{k-1}$ is isomorphic to
$\mathrm{Ann}_{k-1}/\mathrm{Ann}_k$.
We use the same notation $\nabla^{(k)}$ for the induced dual connection on 
$\mathrm{Ann}_{k-1}/\mathrm{Ann}_k  \times T^*\mathbb{C}^*\to \tilde{M}$.

The $\nabla^{(k)}$-flatness condition $\Tilde{\nabla}^{(k)}\xi=0$ for
a section $\xi=\sum_{1\leq a\leq m_k}\xi_{ka}dt^{ka}\in 
\tilde{\Gamma}(\mathrm{Ann}_{k-1}/\mathrm{Ann}_k)$ is equivalent to
\begin{equation}\label{flat-dual}
\begin{split}
\partial_{lb}(\xi_{ka})&=
\hbar \sum_{1\leq c\leq m_k}C_{lb,ka}^{kc}\,\xi_{kc}\qquad \text{ and }
\\
\partial_{\hbar}(\xi_{ka})
&=
\sum_{\begin{subarray}{c}
l\geq k,\\1\leq b\leq m_l
\end{subarray}}
\sum_{1\leq c\leq m_k}
E^{lb}C_{lb,ka}^{kc}\,\xi_{kc}
+
\frac{1}{\hbar}\Big(
\sum_{1\leq c\leq m_k}
(\partial_{ka}E^{kc})\xi_{kc}-\frac{2-D+k}{2}\,\xi_{ka}
\Big)~.
\end{split}
\end{equation}
The first equation in \eqref{flat-dual} implies that
$
\partial_{kb}\xi_{ka}
=\partial_{ka}\xi_{kb}$ 
and
$\partial_{lb}\xi_{ka}=0
$ if $l<k$.
Therefore if $\Tilde{\nabla}^{(k)}\xi=0$, there exists a local function 
$\tilde{t}=\tilde{t}(t,\hbar)$ on $\tilde{M}$
satisfying 
$
\partial_{ka}\tilde{t}=\xi_{ka}
$ and 
$
\partial_{la}\tilde{t}=0
$ ($l<k$).
In other words, there exists $\tilde{t}(t,\hbar)$ such that
$$
d\tilde{t}=\xi+\text{ terms involving $dt^{lb}$ ($l>k$) and $d\hbar$}. 
$$

Since $\Tilde{\nabla}^{(k)}$ is flat,
there exists a local frame $\{p^{ka}(t,\hbar)\mid 1\leq a\leq m_k\}$
of 
$\mathrm{Ann}_{k-1}/\mathrm{Ann}_k  \boxtimes T^*\mathbb{C}^*\to \tilde{M}$
such that
$
\tilde{\nabla}^{(k)}p^{ka}=0
$.
From the argument in the previous paragraph,
 we see that there exist local functions
$\Tilde{t}^{ka}(t,\hbar)$ ($k\in \mathbb{Z}$, $1\leq a\leq m_k$)
satisfying the following two conditions:
\begin{equation}\nonumber
\begin{split}
&\{d\tilde{t}^{(la)}\mid l\geq k, 1\leq a\leq m_l\} \text{
is a local frame of $\mathrm{Ann}_{k-1}$},
\\
&p^{(ka)}=d\tilde{t}^{(ka)} \mod dt^{lb} ~~~(l>k),~d\hbar~.
\end{split}
\end{equation}
These $\Tilde{t}^{(ka)}(t,\hbar)$ satisfy the conditions
in the above proposition.
\end{proof}

%%%%%%%%%%%%%%%%%%%%%%%%%%%%%%%%%%%%%%%%%%%%%%%%%
\subsection{Deformed flat coordinates for
weak Fano toric surfaces}
\label{deformed-coord-toric}
%%%%%%%%%%%%%%%%%%%%%%%%%%%%%%%%%%%%%%%%%%%%%%%%%
%\subsection{Deformed flat coordinates}
The deformed flat coordinates for the MFS in Theorem \ref{toric3}
is written as follows.\footnote{We omit the detail of the calculation for the following results. 
It can be found in the first version of this paper at the arXiv.}

Assume that $b_r^{\vee}\neq 0$.
We take the following flat coordinates on $M=H^*(S,\mathbb{C})$ 
so that the condition
\eqref{flat-frame} is satisfied:
\begin{equation}\label{coord-S}
\begin{split}
&t^{r+1}~,\quad u^r=-\frac{1}{\kappa}\sum_{k=1}^r b_k^{\vee}\,t^k~,
\\
&u^k=\frac{1}{\kappa\cdot b_r^{\vee}}\Big\{
\sum_{\begin{subarray}{c}j\neq k,\\1\leq j\leq r\end{subarray}}
b_jb_j^{\vee}\,t^k
-b_k \sum_{\begin{subarray}{c}j\neq k,\\1\leq j\leq r\end{subarray}}
b_j^{\vee}\,t^j
\Big\}~\quad (1\leq k\leq r-1),\\
& t^0~.
\end{split}
\end{equation}

Solving the flatness equation for $\Tilde{\nabla}^{(k)}$
($1\leq k\leq 4$), we obtain 
the following deformed flat coordinates.
\begin{equation}\label{flat-coord-toric}
\begin{split}
&\tilde{t}^{r+1}=e^{\hbar\, t^0}\Big\{
\frac{1}{\sqrt{\hbar}}t^{r+1}+\sqrt{\hbar}
\Big(\frac{\kappa}{2}{u_r}^2
-\sum_{\beta\neq 0}(b\cdot \beta)N_{\beta}\,e^{\beta\cdot t}
\Big)
\Big\}~,
\\
&
{\tilde{u}}^r=\sqrt{\hbar}\,e^{\hbar\, t^0}u^r~,
\\
&\tilde{u}^k=e^{\hbar\,t^0}u^k\quad (1\leq k\leq r-1)~,
\\
&\tilde{t}^0=e^{\hbar\, t^0}~.
\end{split}
\end{equation}
\subsection{Deformed flat coordinates for $\mathbb{P}^3$ }
\label{deformed-coord-P3}
%%%%%%%%%%%%%%%%%%%%%%%%%%%%%%%%%%%%%%%%%%%%%%%%%%%%%%%%%%%%%%%%%%%%%%%%%%%
%\subsection{Deformed flat coordinates}
%Finally, let us compute
%the deformed flat coordinates. They turn out to be
The deformed flat coordinates for the MFS in Theorem \ref{prop-P3-2}
is written as follows.
\begin{equation}\label{deformed-coord-P3-result}
\begin{split}
\tilde{t}^0&=\frac{1}{\hbar}e^{\hbar\,t^0}~,
\\
\tilde{t}^1&=\hbar\, e^{\hbar\,t^0}t^1~,
\\
\tilde{t}^2&=e^{\hbar\,t^0}\Big\{
t^2+\hbar\Big(\frac{(t^1)^2}{2}
-4\sum_{\beta>0} N_{\beta}\,e^{\beta\,t^1}\Big)\Big\}~,
\\
\tilde{t}^3&=e^{\hbar\,t^0}\Big\{
\frac{t^3}{\hbar}+
t^1t^2-4t^2\sum_{\beta>0}\beta N_{\beta}\,e^{\beta\,t^1}
\\
&+\frac{\hbar}{2}
\Big[\frac{(t^1)^3}{3}
-8\sum_{\beta>0}\Big(t^1-\frac{1}{\beta}\Big)N_{\beta}\,e^{\beta\,t^1}
+16\sum_{\beta,\gamma>0} \frac{\beta\gamma}{\beta+\gamma}
N_{\beta}N_{\gamma}e^{(\beta+\gamma)t^1}\Big]
\Big\}~.
\end{split}
\end{equation}

\end{document}